\newcommand{\N}{\mathbb{N}}
\newcommand{\cM}{{\mathcal M}}
\newcommand{\cG}{{\mathcal G}}
\newcommand{\btau}{\bar{\tau}}
\newcommand{\bgamma}{\bar{\gamma}}
\newtheorem{theorem}{Theorem}
\theoremstyle{definition}
\newtheorem{corollary}[theorem]{Corollary}
\newtheorem{lemma}[theorem]{Lemma}
\newtheorem{proposition}[theorem]{Proposition}
\newtheorem{definition}[theorem]{Definition}
\numberwithin{equation}{section}
\def\co{\colon\thinspace}
\begin{document}
\title{Algebraic structures among virtual singular braids}

\author{Carmen Caprau}
\address{Department of Mathematics, California State University, Fresno;  5245 N. Backer Avenue, M/S PB108, CA 93740, USA}
\email{ccaprau@csufresno.edu}

\author{Antonia Yeung}
\address{Department of Mathematics, California State University, Fresno;  5245 N. Backer Avenue, M/S PB108, CA 93740, USA}\email{antoniayeung@csufresno.edu}

\subjclass[2020]{20F36, 20F05; 57K12}
\keywords{Braids, Reidemeister-Schreier method, semi-direct products, virtual singular pure braids}
\thanks{This research was partially supported by Simons Foundation grant $\#355640$ - Carmen Caprau}

\begin{abstract}
We show that the virtual singular braid monoid on $n$ strands embeds in a group $VSG_n$, which we call the virtual singular braid group on $n$ strands. The group $VSG_n$ contains a normal subgroup $VSPG_n$ of virtual singular pure braids.  We show that $VSG_n$ is a semi-direct product of  $VSPG_n$ and the symmetric group $S_n$. We provide a presentation for $VSPG_n$ via generators and relations. We also represent $VSPG_n$ as a semi-direct product of $n-1$ subgroups and study the structures of these subgroups. These results yield a normal form of words in the virtual singular braid group. 
\end{abstract}

\maketitle

\section{Introduction} \label{intro} 
We can study classical knots by studying the algebraic structures of classical braids. Due to a theorem by Markov~\cite{Markov}, the classification of knots and links is equivalent to certain algebraic properties of classical braids. Similarly, we can study algebraic structures of virtual braids, singular braids, virtual singular braids, and welded braids to classify virtual knots, singular knots, virtual singular knots, and welded knots, respectively (see, for example~\cite{Bardakov, BB, CPM, Caprau, Fenn, G, Ka, KL, KL2}).

In this paper we study some of the algebraic properties of virtual singular braids. Virtual singular braids are similar to classical braids, with the difference that they contain virtual and singular crossings, besides classical crossings. Two virtual singular braids $\alpha$ and $\beta$ on $n$ strands are \textit{multiplied} (or \textit{composed}) using vertical concatenation. The braid $\alpha \beta$ is formed by placing $\alpha$ on top of $\beta$ and gluing the bottom endpoints of $\alpha$ with the top endpoints of $\beta$. Under this binary operation, the set of isotopy classes of virtual singular braids on $n$ strands forms a monoid. This monoid can be defined as follows.

\begin{definition} \label{def:vsbn} 
Let $n \in \N$, $n \geq 2$. The \textit{virtual singular braid monoid on $n$ strands}, ${VSB}_n$, is the monoid generated by the \textit{elementary virtual singular braids} $\{\sigma_i, \sigma_i^{-1}, v_i, \tau_i, \, | \, 1 \leq i \leq n-1\}$:

\[  \sigma_i \,\,\, =\,\,\,  \raisebox{-17pt}{\includegraphics[height=.5in]{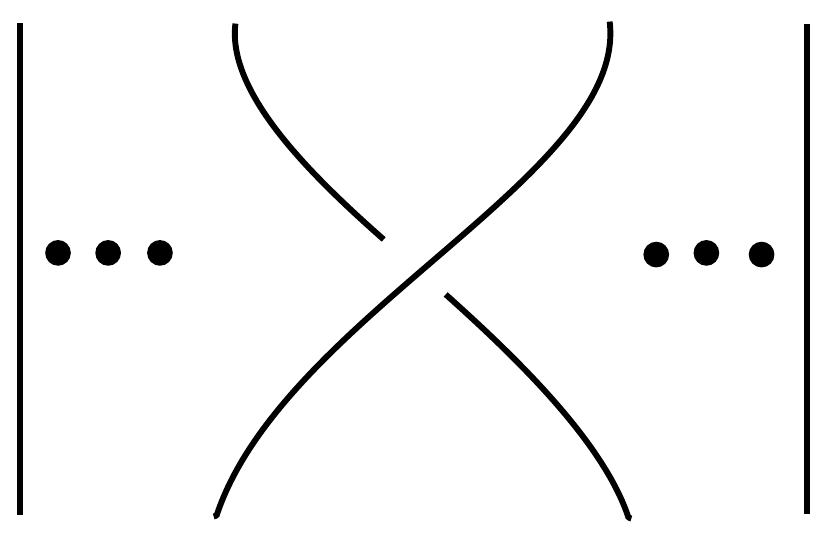}} \hspace{1cm} \sigma_i^{-1} \,\,\,=\,\,\, \raisebox{-17pt}{\includegraphics[height=.5in]{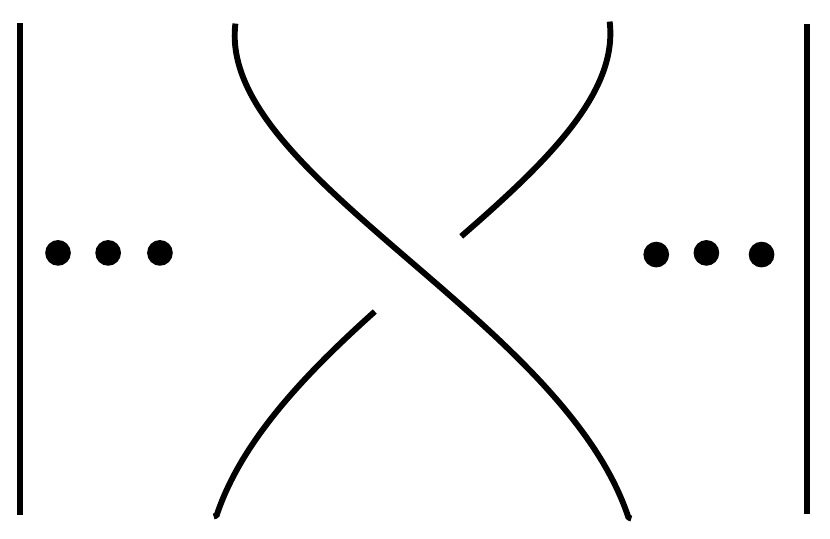}}
\put(-180, 21){\fontsize{7}{7}$1$}
\put(-165, 21){\fontsize{7}{7}$i$}
\put(-150, 21){\fontsize{7}{7}$i+1$}
\put(-127,21){\fontsize{7}{7}$n$}
\put(-58, 21){\fontsize{7}{7}$1$}
\put(-40, 21){\fontsize{7}{7}$i$}
\put(-25, 21){\fontsize{7}{7}$i+1$}
\put(-3,21){\fontsize{7}{7}$n$}
\]
\vspace{0.2cm}
\[ \tau_i \,\,\,= \,\,\, \stackrel \,\, \raisebox{-17pt}{\includegraphics[height=.5in]{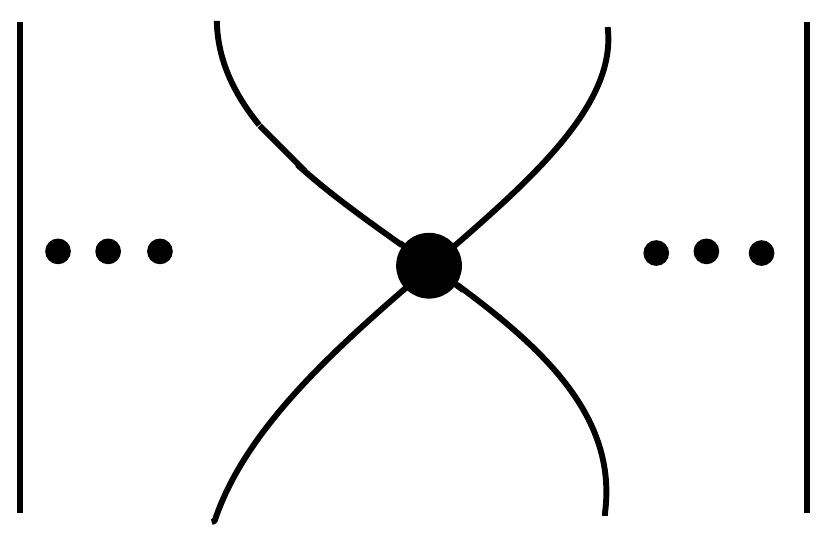}} \hspace{1cm} v_i \,\,\, = \,\,\, \raisebox{-17pt}{\includegraphics[height=.5in]{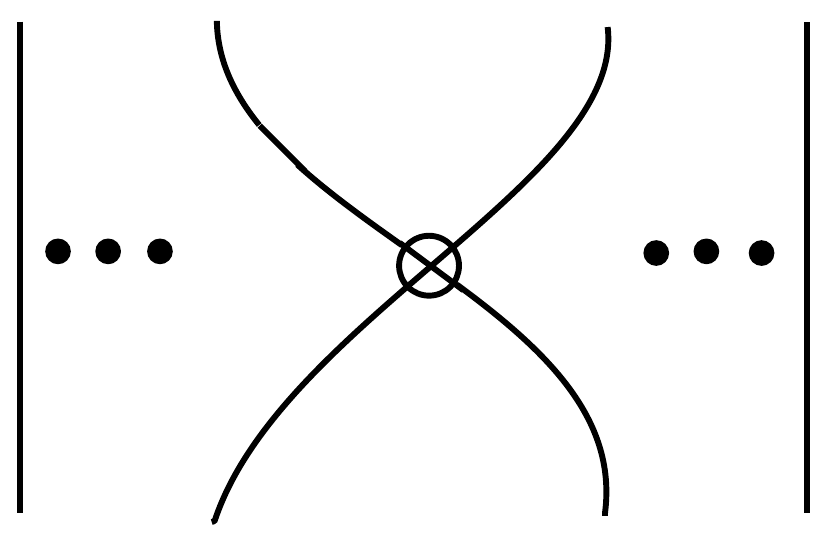}}
\put(-171, 21){\fontsize{7}{7}$1$}
\put(-158, 21){\fontsize{7}{7}$i$}
\put(-140, 21){\fontsize{7}{7}$i+1$}
\put(-119,21){\fontsize{7}{7}$n$}
\put(-56, 21){\fontsize{7}{7}$1$}
\put(-42, 21){\fontsize{7}{7}$i$}
\put(-25, 21){\fontsize{7}{7}$i+1$}
\put(-3,21){\fontsize{7}{7}$n$}
\]
with the defining relations: 

\begin{enumerate}
\item $\sigma_i\sigma_i^{-1}=\sigma_i^{-1}\sigma_i=1_n$
\begin{eqnarray*}
\raisebox{-.7cm}{\includegraphics[height=0.7in]{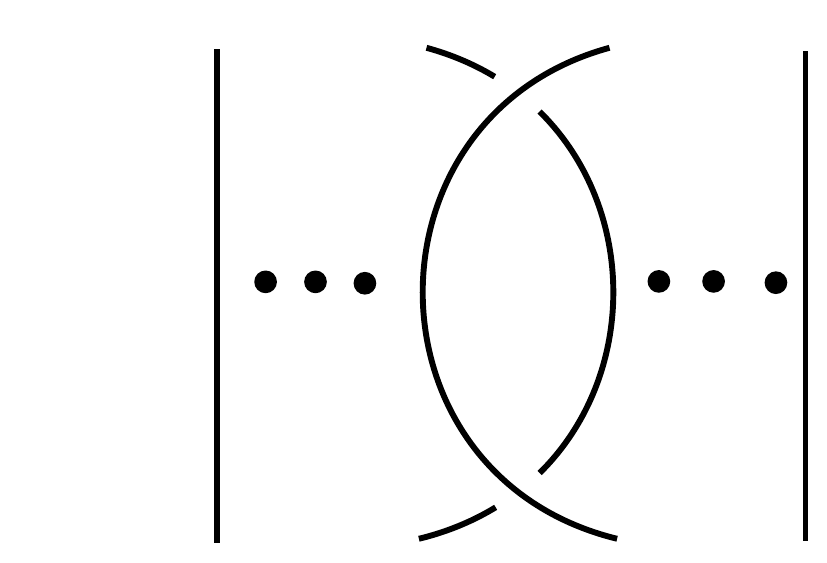}} \hspace{0.5cm} \sim \hspace{0.18cm} &\raisebox{-1.2cm}{\includegraphics[height=0.85in]{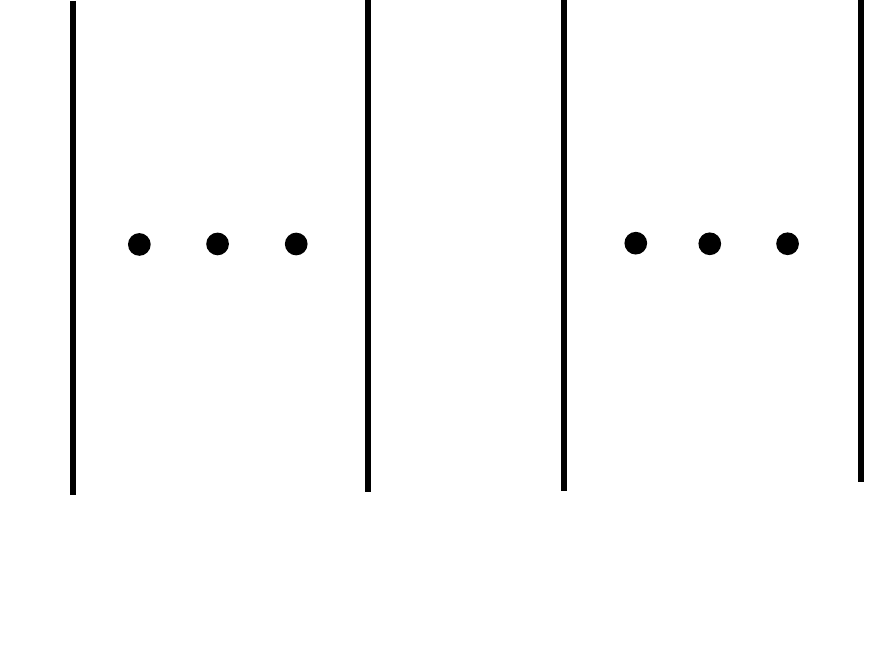}}
\end{eqnarray*}

\item $v_i^2=1_n$
\begin{eqnarray*}
\raisebox{-.7cm}{\includegraphics[height=0.7in]{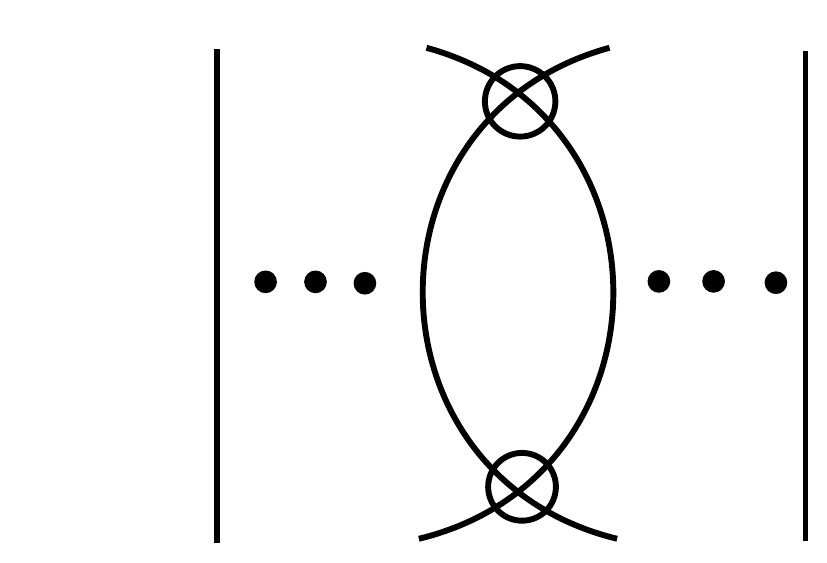}} \hspace{0.5cm} \sim \hspace{0.18cm} &\raisebox{-1.2cm}{\includegraphics[height=0.85in]{idb}}
\end{eqnarray*}

\item $\sigma_i\sigma_j\sigma_i=\sigma_j\sigma_i\sigma_j$, for $|i-j|=1$
\begin{eqnarray*}
\raisebox{-.7cm}{\includegraphics[height=0.7in]{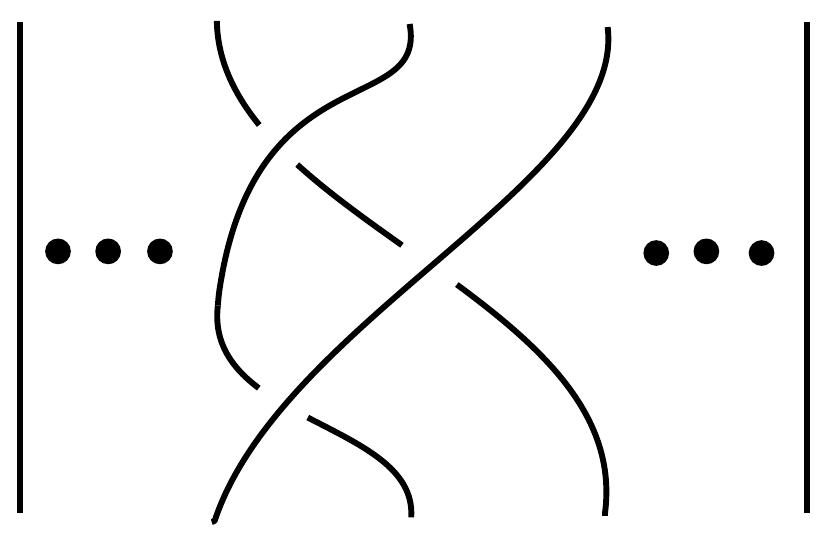}} \hspace{0.5cm} \sim \hspace{0.2cm} &\raisebox{-.7cm}{\includegraphics[height=0.7in]{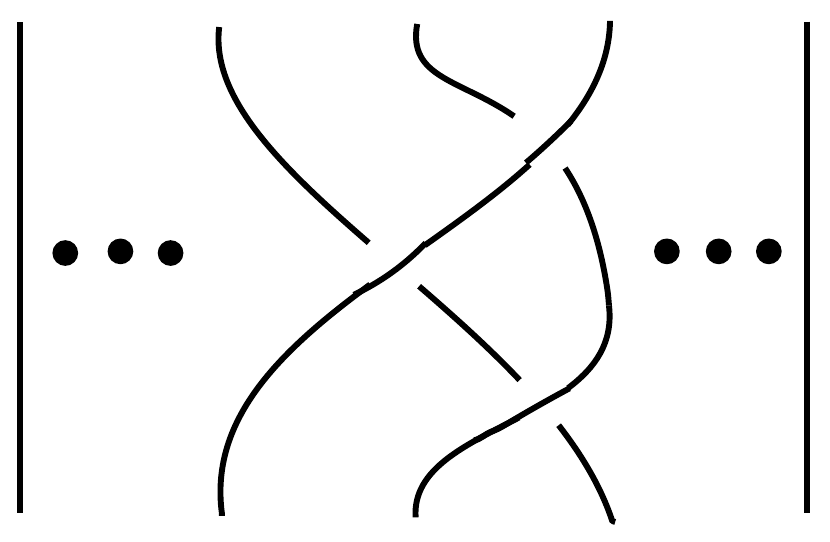}}
\end{eqnarray*}

\item $v_{i}v_{j}v_{i}=v_{j}v_{i}v_{j}$, for $|i-j|=1$
\begin{eqnarray*}
\raisebox{-.7cm}{\includegraphics[height=0.7in]{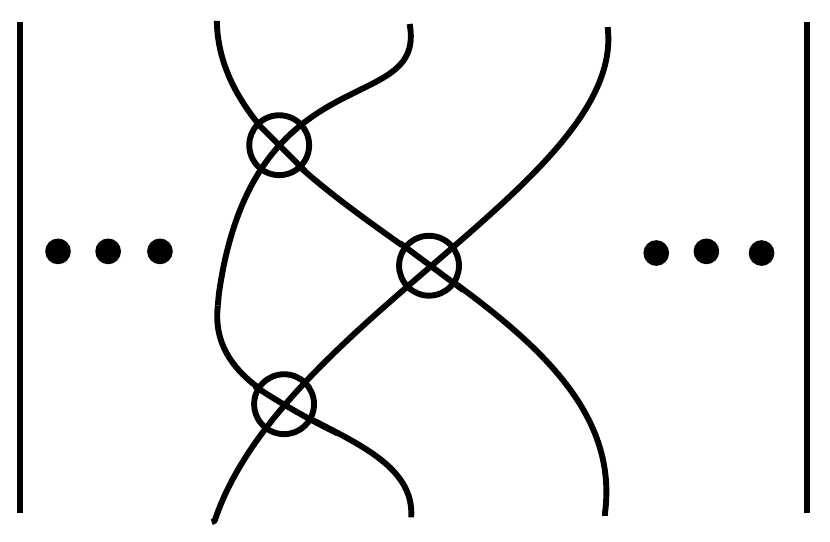}} \hspace{0.5cm} \sim \hspace{0.2cm}&\raisebox{-.7cm}{\includegraphics[height=0.7in]{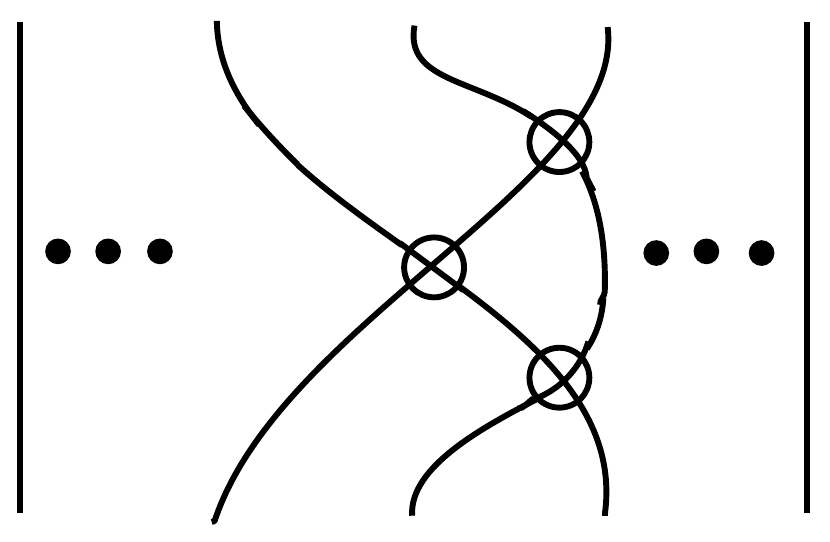}}
\end{eqnarray*}

\item $v_i\sigma_{j}v_i=v_{j}\sigma_iv_{j}$, for $|i-j|=1$ 
\begin{eqnarray*}
\raisebox{-.7cm}{\includegraphics[height=0.7in]{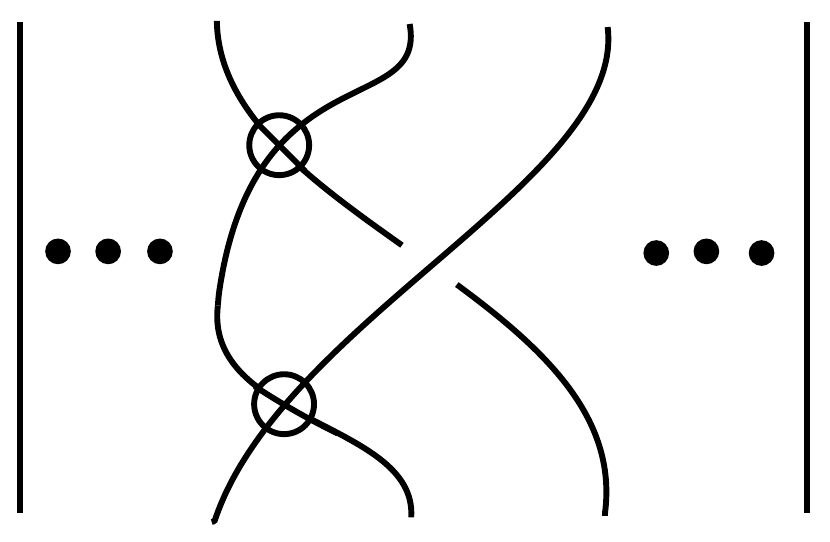}} \hspace{0.5cm} \sim \hspace{0.2cm} &\raisebox{-.7cm}{\includegraphics[height=0.7in]{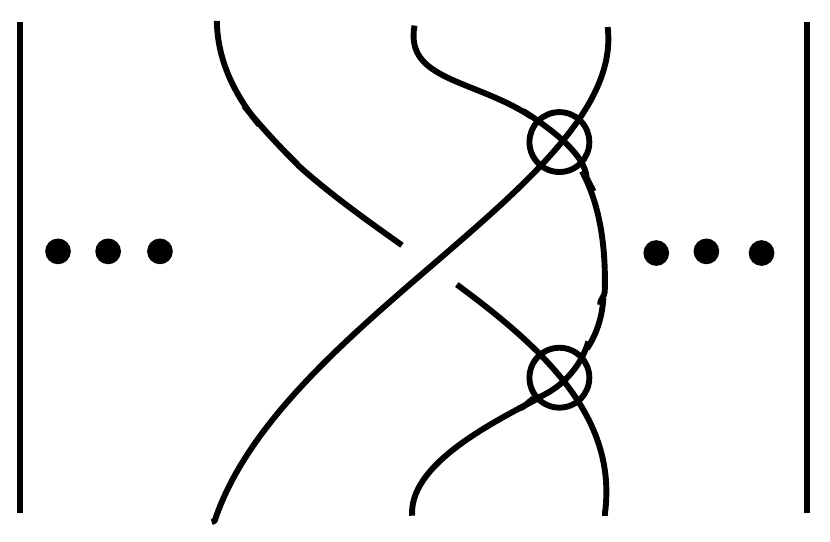}}
\end{eqnarray*}

\item $v_i\tau_{j}v_i=v_{j}\tau_iv_{j}$, for $|i-j|=1$
\begin{eqnarray*}
\raisebox{-.7cm}{\includegraphics[height=0.7in]{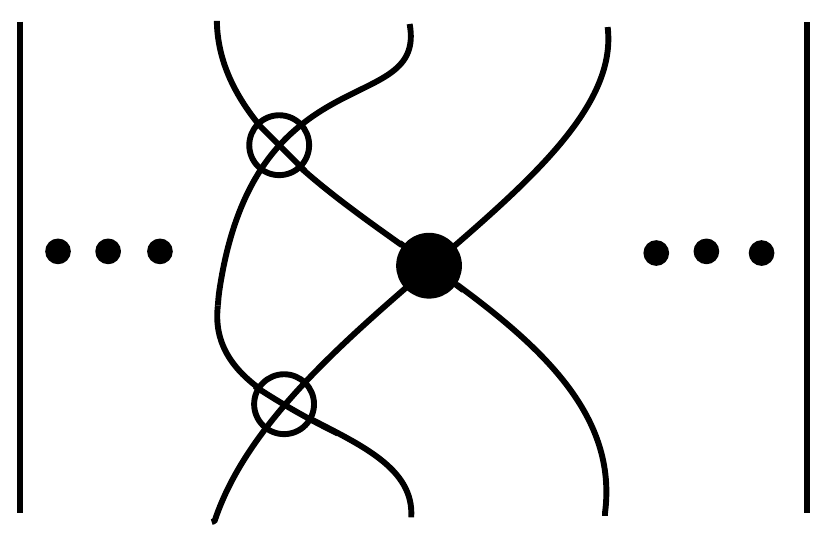}} \hspace{0.5cm} \sim \hspace{0.2cm} &\raisebox{-.7cm}{\includegraphics[height=0.7in]{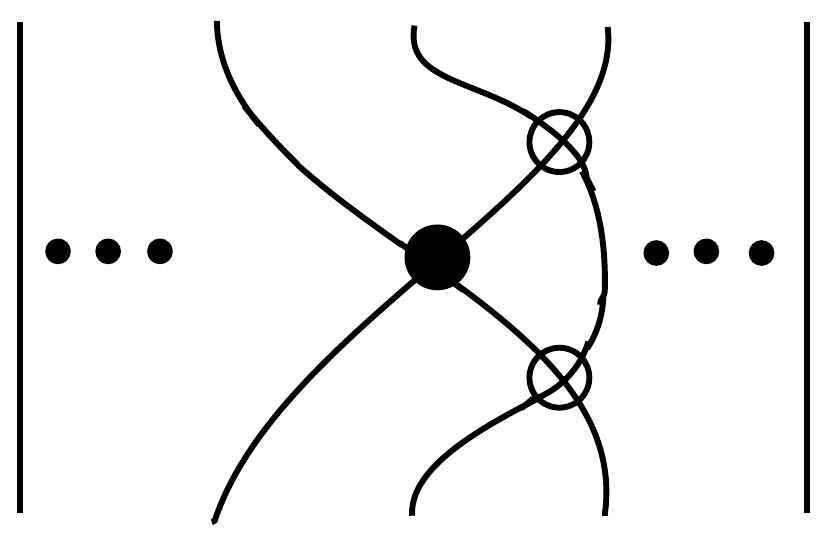}}
\end{eqnarray*}

\item $\sigma_i\sigma_j\tau_i=\tau_j\sigma_i\sigma_j$, for $|i-j|=1$.
\begin{eqnarray*}
\raisebox{-.7cm}{\includegraphics[height=0.7in]{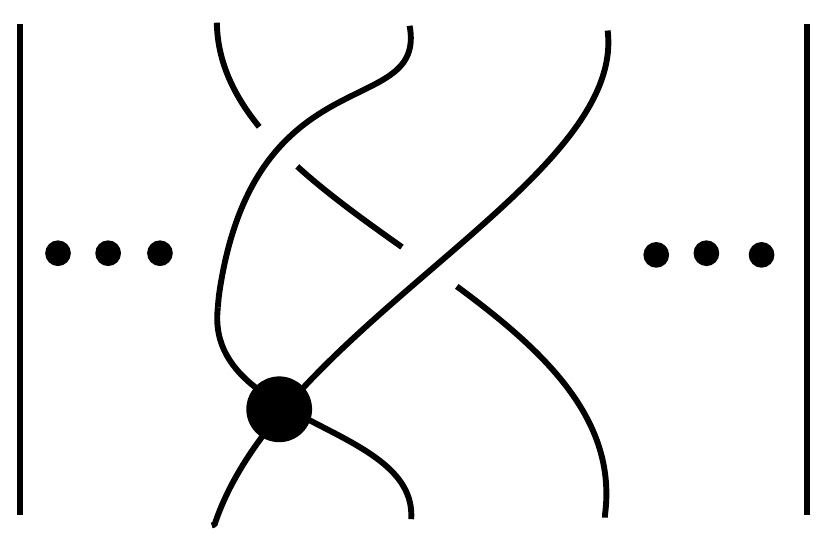}} \hspace{0.5cm} \sim \hspace{0.2cm} &\raisebox{-.7cm}{\includegraphics[height=0.7in]{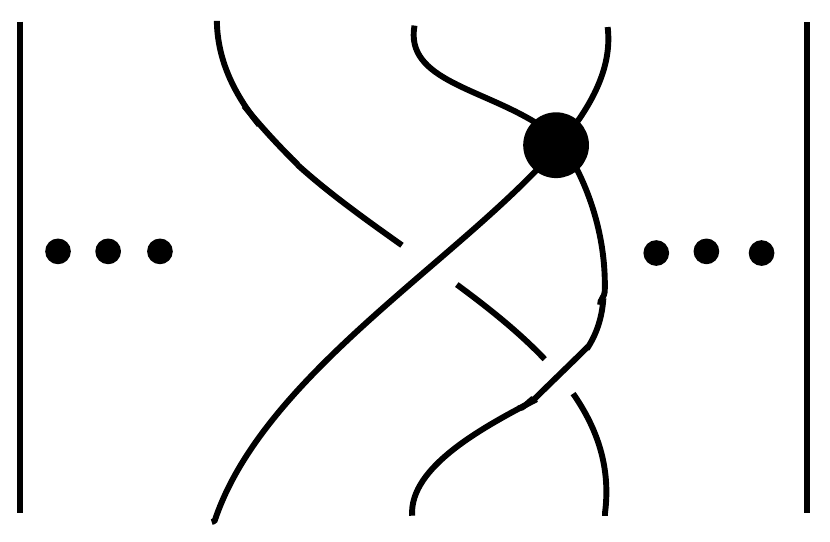}}
\end{eqnarray*}

\item $\sigma_i\tau_i=\tau_i\sigma_i$
\begin{eqnarray*}
\raisebox{-.7cm}{\includegraphics[height=0.7in]{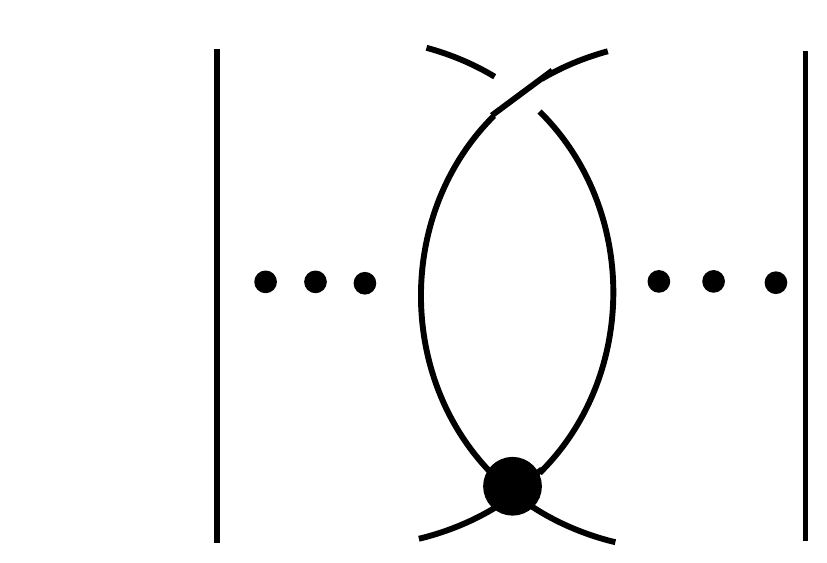}} \hspace{0.5cm} \sim  \hspace{-0.3cm} &\raisebox{-.7cm}{\includegraphics[height=0.7in]{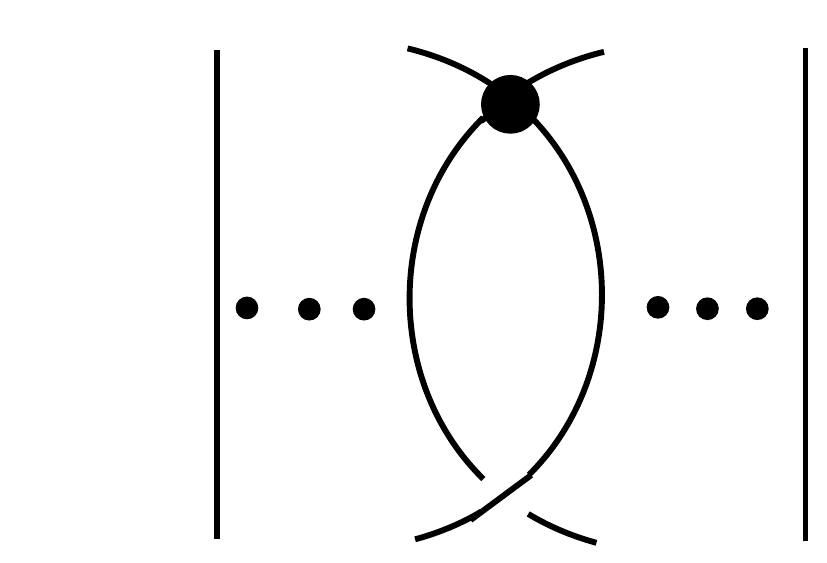}}
\end{eqnarray*}

\item $g_ih_j = h_jg_i $ for $|i-j| >1$, where $g_i, h_i \in \{ \sigma_i, \tau_i, v_i  \}$.
\begin{eqnarray*}
\raisebox{-.7cm}{\includegraphics[height=0.7in]{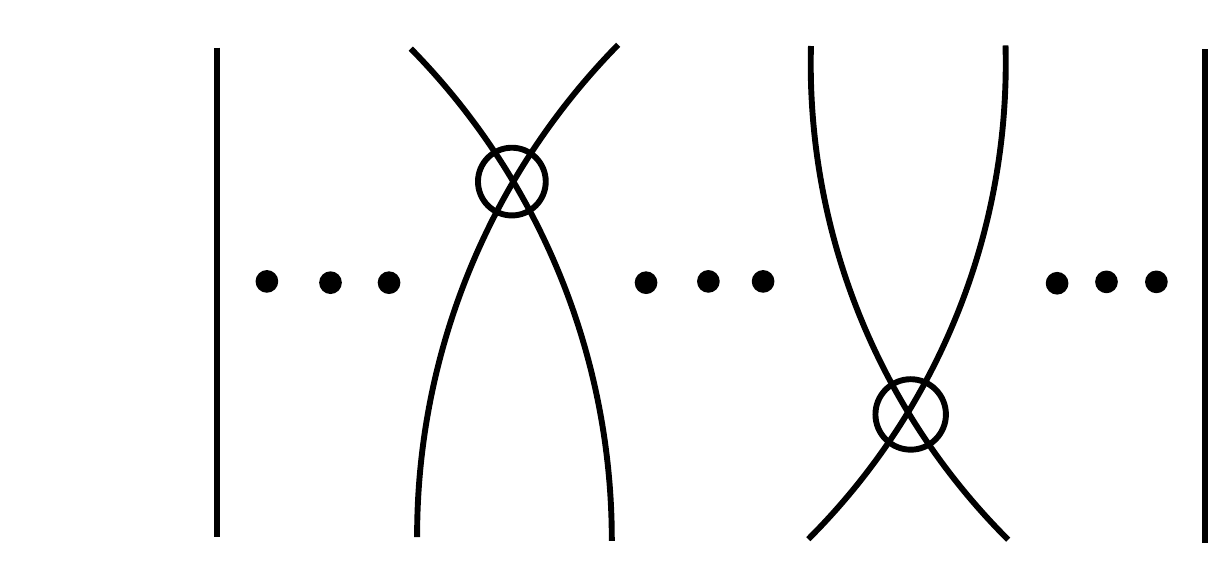}} \hspace{0.5cm} \sim \hspace{-.3cm} &\raisebox{-.7cm}{\includegraphics[height=0.7in]{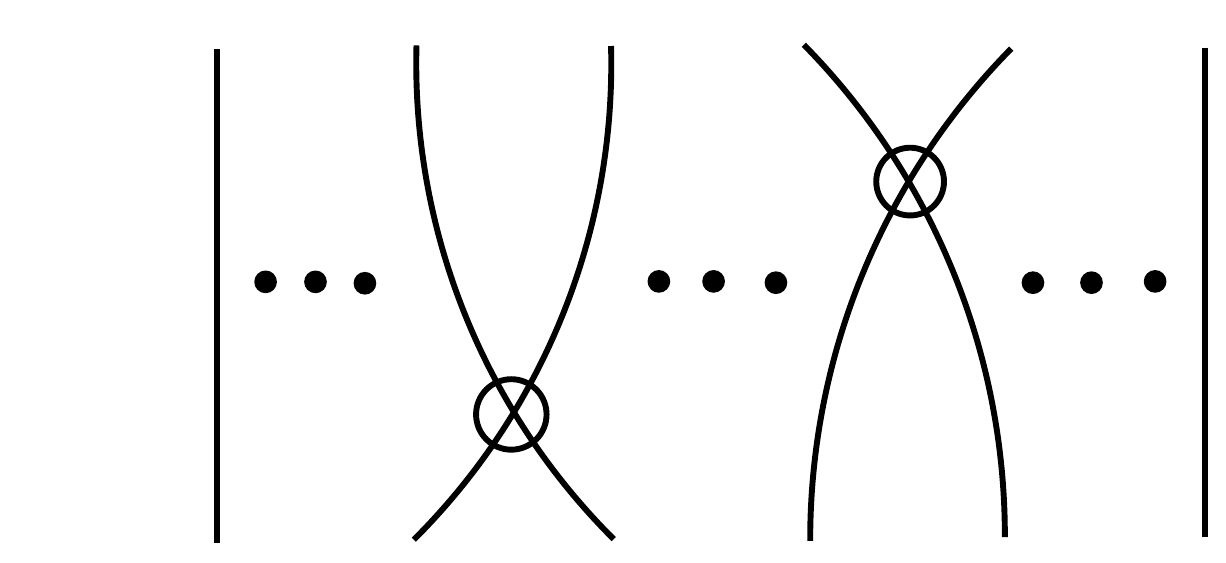}}
\end{eqnarray*}
\end{enumerate}
\end{definition}

Notice that some relations can be written in different equivalent forms. For example, relation (6) is equivalent to $v_jv_i\tau_j=\tau_iv_jv_i$, where $|i-j|=1$. We also remark that relation (7) says that we can slide a strand under or over a singular crossing; we only depicted the relation representing the sliding of a strand under a singular crossing.

The relations in Definition~\ref{def:vsbn} define virtual singular braid isotopy. Each of the defining relations for $VSB_n$ is a braided version of Reidemeister-type moves for diagrams of virtual singular knots and links, with the exception of the type I moves (involving classical and virtual crossings) which cannot be represented in braid form. The generators $\sigma_i$ and $v_i$ are invertible (due to relations (1) and (2) in Definition~\ref{def:vsbn}), with inverses $\sigma_i^{-1}$ and  respectively $v_i$, while the generators $\tau_i$ are not invertible (since there is no type II move for virtual singular link diagrams involving singular crossings).

In this paper we show that the virtual singular braid monoid $VSB_n$ embeds in a group, which we refer to as the \textit{virtual singular braid group} and we denote it by $VSG_n$. This group contains a normal subgroup, $VSPG_n$, which we refer to as the \textit{virtual singular pure braid group}. We show that $VSG_n/VSPG_n \cong S_n$ and that $VSG_n \cong VSPG_n  \rtimes S_n$. In addition, we prove that the group $VSPG_n$ is representable as the semi-direct product $VSPG_n = VS_{n-1}^* \rtimes \left(VS_{n-2}^* \rtimes \left( \cdots \rtimes \left(VS_2^* \rtimes VS_1^* \right) \cdots  \right) \right)$,
where $VS_i^*$, for all $2 \le i \le n-1$, are infinitely generated subgroups of $VSPG_n$ and $VS_1^*$ is a subgroup of rank $4$. As a consequence of these results, we obtain a normal form of words in $VSPG_n$ and $VSG_n$.

The paper is organized as follows. Section~\ref{embedding} is dedicated to proving that the virtual singular braid monoid, $VSB_n$, embeds in a group, $VSG_n$, and we describe this group. In Section~\ref{another presentation} we provide a second presentation for the group $VSG_n$; this presentation has as generators certain type of virtual singular pure braids, called elementary fusing strings, together with the virtual generators for $VSB_n$. There is a normal subgroup of $VSG_n$, denoted by $VSPG_n$, whose elements are virtual singular pure braids, and the quotient group $VSG_n/VSPG_n$ is isomorphic to the symmetric group $S_n$. While it is sufficient to work with the elementary fusing strings (together with the virtual generators) to generate the group $VSB_n$, this is not the case for the subgroup $VSPG_n$, for which we need more general virtual singular pure braids, which we refer to as generalized fusing strings. In Section~\ref{presentation VSPGn} we use the Reidemeister-Schreier method to find a presentation for the normal subgroup $VSPG_n$ using generators and relations. In Section~\ref{semi-direct product} we prove that the virtual singular pure braid group $VSPG_n$ decomposes as a semi-direct product of $n-1$ subgroups and study the structures of these subgroups. We also show how this decomposition provides a normal form of words in $VSPG_n$, and hence in $VSG_n$.

\section{Virtual singular braid monoid embeds in a group}\label{embedding}

It was shown by Fenn, Keyman and Rourke in~\cite{Fenn} that the singular braid monoid $SB_n$ embeds in a group, denoted by $SG_n$ and called the \textit{singular braid group}. The elements of this group have a geometric interpretation as singular braids with two types of singular crossings that cancel: $\tau_i$ and $\bar{\tau_i}$. We will adopt the representation of the new type of singularity as an open blob, as shown below:
\[\btau_i =  \raisebox{-0.8cm}{\includegraphics[height=0.7in]{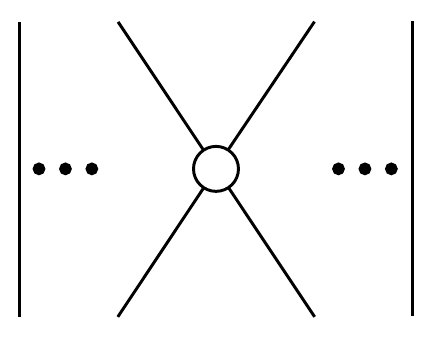}} \put(-53, 27){\fontsize{7}{7} $i$ \quad $i+1$} \]

The method used in~\cite{Fenn} is purely geometric. Motivated by the result in~\cite{Fenn}, Keyman~\cite{Keyman} developed a method that uses elementary algebraic properties to show that certain type of monoids embed in groups; these monoids have  presentations similar to that of the singular braid monoid. 

We denote the free monoid on a set $A$ by $F^+(A)$. The following theorem was proved in~\cite{Keyman}, and thus we omit it's proof here.

\begin{theorem} \label{Monoid Embed in a Group}
\cite[Theorem 3]{Keyman}
Let $\cM$ be a monoid given by a presentation $[A \cup B \, | \,{\bf R}]$ where $A=\{ a_1, \dots, a_n \}$, $B=\{ b_1, \dots, b_m \}$ and ${\bf R  = R_1 \cup R_2 \cup  R_3 \cup R_4}$, where:
\begin{itemize}
\item ${\bf R_1}$ consists of relations of the form $u=v$, where $u,v \in F^+(A)$;
\item ${\bf R_2} =\{a_iu_i=u_ia_i=1 | \text{ for some } u_i \in F^+(A), \text{ for all } i=1, \dots, n\}$;
\item ${\bf R_3}$ consists of relations of the form $ub_j=b_ku$, for some $j,k=1, \dots, m$ and $u \in F^+(A)$;
\item ${\bf R_4}$ consists of relations of the form $b_jb_k=b_kb_j$, for some $j,k=1, \dots, m$.
\end{itemize}
Then $\cM$ embeds in a group $\cG$ with presentation $[A \cup B \cup \bar{B} \, | \, {\bf R \cup R' }]$, where $\bar{B}=\{ \bar{b}_j | j = 1, \dots, m  \}$ and ${\bf R'}$ consists of the following relations:

$b_j\bar{b}_j=1=\bar{b}_jb_j$,

$u\bar{b}_j=\bar{b}_ku$ if $ub_j=b_ku \in {\bf R}$, and

$\bar{b}_jb_k=b_k\bar{b}_j$ and $\bar{b}_j\bar{b}_k=\bar{b}_k\bar{b}_j$ if $b_jb_k=b_kb_j \in {\bf R}$.
\end{theorem}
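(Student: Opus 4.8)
The plan is to observe first that the inclusion of generators $A\cup B\hookrightarrow A\cup B\cup\bar B$ induces a monoid homomorphism $\phi\co\cM\to\cG$ — immediate, since every defining relation of $\cM$ occurs among the defining relations $\mathbf{R}\cup\mathbf{R}'$ of $\cG$ — and that $\phi$ maps $\cM$ onto the submonoid of $\cG$ generated by $A\cup B$. Thus the entire content is the injectivity of $\phi$, i.e.\ the statement: if $w_1,w_2\in F^+(A\cup B)$ represent the same element of $\cG$, then they already represent the same element of $\cM$.

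Before attacking this I would record two structural observations. First, in $\cG$ each $\bar b_j$ is forced to equal $b_j^{-1}$ (by $b_j\bar b_j=1=\bar b_j b_j$), and each relation in $\mathbf{R}'$ is a formal consequence of the corresponding relation in $\mathbf{R}$ once all letters are invertible (for instance $u b_j=b_k u$ yields $u\bar b_j=\bar b_k u$ after conjugation); a Tietze transformation therefore identifies $\cG$ with the group $\langle A\cup B\mid\mathbf{R}\rangle$, that is, with the \emph{group completion} of $\cM$. So the theorem asserts precisely that $\cM$ embeds in its universal group. Second, counting each $b_j$ as $+1$, each $\bar b_j$ as $-1$, and each $a_i$ as $0$ gives a well-defined homomorphism $\cG\to\Z$ whose restriction to $\cM$ is the number of $B$-letters; this is a first consistency check, and more usefully it shows that any reduction process can be controlled by a length/grading bookkeeping.

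The main step is to produce a normal form for elements of $\cG$ and a set-theoretic retraction of $\phi(\cM)$ onto $\cM$. I would set up a rewriting system on words over $A\cup B\cup\bar B$: orient the relations in $\mathbf{R}_2$ and the cancellations $b_j\bar b_j\to\varepsilon$, $\bar b_j b_j\to\varepsilon$ as length-reducing rules, and use $\mathbf{R}_1,\mathbf{R}_3,\mathbf{R}_4$ together with the $\mathbf{R}'$-commutations and $u\bar b_j=\bar b_k u$ as length-preserving ``shuffle'' rules whose purpose is to push the barred letters to the left — through the subgroup $\langle A\rangle$ via $u\bar b_j=\bar b_k u$, and past commuting $B$-letters via $\bar b_j b_k=b_k\bar b_j$ — so as to expose cancellations. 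One then wants: the system is terminating and confluent, so every element of $\cG$ has a well-defined irreducible form; an element lies in $\phi(\cM)$ iff its irreducible form is a positive word (no $\bar B$-letters); and two positive words with the same $\cG$-image have $\mathbf{R}$-equivalent irreducible forms, which is exactly the injectivity of $\phi$. Equivalently, the same data can be packaged as a van der Waerden type action of $\cG$ on the set of irreducible words, the empty word recovering $\cM$ inside $\cG$.

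The hard part is confluence. Termination follows from the $B$-length plus a secondary well-ordering for the $\mathbf{R}_1$-shuffles inside $\langle A\rangle$. For confluence one must resolve the overlapping redexes: overlaps purely among $\mathbf{R}$-relations close because those relations present $\cM$, while the genuinely new critical pairs involve a cancellation $b_j\bar b_j$ (or $\bar b_j b_j$) overlapping a shuffle — and it is exactly here that the shape of $\mathbf{R}'$ does its work, since a $b_j$ that simultaneously sits in an $\mathbf{R}_3$-pattern $u b_j\to b_k u$ and abuts a $\bar b_j$ forces the use of $u\bar b_j=\bar b_k u$, and the diamond closes. I expect the most delicate configuration to be a $\bar b_j$ wedged between two $B$-letters whose commutation is \emph{not} among the relations $\mathbf{R}_4$: one must argue, via the $\Z$-grading and the structure of the relations, that no such stuck word is $\cG$-equal to a positive word, so that these irreducible-but-nonpositive configurations are harmless for the retraction. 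Completing this critical-pair analysis is the bulk of the proof; the rest is bookkeeping.
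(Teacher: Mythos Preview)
The paper does not prove this theorem: it quotes it verbatim from Keyman~\cite{Keyman} and explicitly states ``The following theorem was proved in~\cite{Keyman}, and thus we omit it's proof here.'' So there is no proof in the paper to compare your proposal against.

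As for the proposal itself: your structural observations are correct --- in particular the Tietze-transformation remark identifying $\cG$ with the group completion of $\cM$ is exactly right, and the $\Z$-grading by $B$-count is a useful invariant. The rewriting/normal-form strategy you outline is a standard and reasonable line of attack for results of this type, and is in the spirit of Keyman's original argument. But what you have written is a sketch, not a proof: you correctly isolate confluence of the rewriting system as the crux, and then say ``Completing this critical-pair analysis is the bulk of the proof.'' That analysis is precisely where the hypotheses on $\mathbf{R}_3$ and $\mathbf{R}_4$ get used in a nontrivial way, and you have not carried it out. In particular, your handling of the ``stuck'' configuration --- a $\bar b_j$ trapped between non-commuting $B$-letters --- is speculative: appealing to the $\Z$-grading alone does not rule out a positive word being $\cG$-equal to such a word, since the grading only sees the total $B$-count. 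If you want a self-contained proof you would need to either complete the Newman-style diamond lemma verification in full, or consult Keyman's paper for the actual argument.
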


In this presentation of a monoid $\cM$, the invertible elements in $A$ can satisfy any relations among them. The elements in $B$ have no inverses (neither left nor right). If $B = \emptyset$ then $\cM$ is given by the presentation $[A \, | \, {\bf R_1\cup R_2}]$ and is a group. 

We show that the virtual singular braid monoid $VSB_n$ embeds in a group using the above theorem. This group is defined as follows.

\begin{definition} \label{VSGn}
Let $n \in \N, n \ge 2$. The \textit{virtual singular braid group}, $VSG_n$, is defined as the group  generated by the same generators as $VSB_n$ along with $\btau_i$, where $1 \le i \le n-1$. The defining relations consist of: 
\begin{enumerate}
\item[(i)] The same monoid relations as $VSB_n$ with additional relations obtained by substituting $\btau_i$ for $\tau_i$ in each relevant relation, namely

$\sigma_i \btau_i = \btau_i\sigma_i$, for all $1 \le i \le n-1$;

$\sigma_i\sigma_j\btau_i = \btau_j\sigma_i\sigma_j$, for $|i-j|=1$;

$v_iv_j\btau_i = \btau_jv_iv_j$, for $|i-j|=1$;

$\btau_i \alpha_j = \alpha_j \btau_i$, for $|i-j|>1$, where $\alpha_j \in \{ \sigma_j,v_j,\tau_j,\btau_j \}$; and

\item[(ii)] $\btau_i\tau_i = 1_n = \tau_i\btau_i$, for  all $1 \le i \le n-1$, which are depicted below:
\[\raisebox{-1cm}{\includegraphics[height=0.9in]{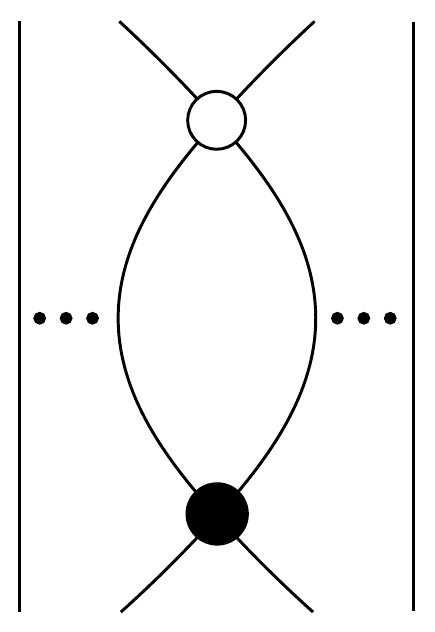}} \hspace{0.2cm} = \hspace{0.2cm} \raisebox{-1cm}{\includegraphics[height=0.9in]{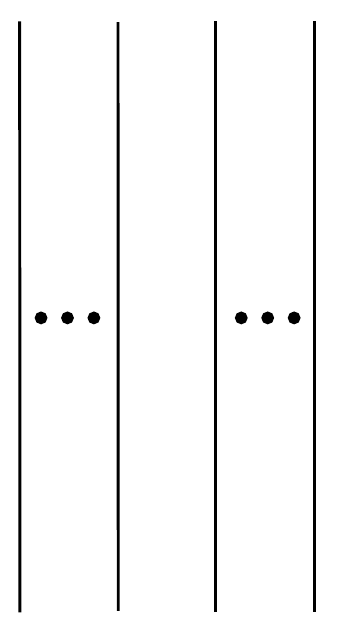}} \hspace{0.2cm} = \hspace{0.2cm} \raisebox{-1cm}{\includegraphics[height=0.9in]{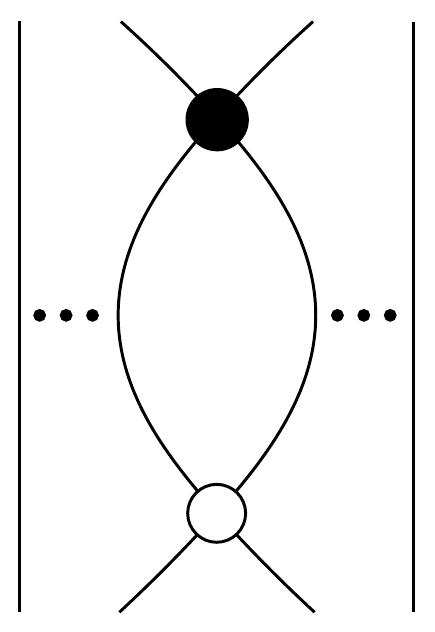}} \]
\end{enumerate}
\end{definition}
Relations (ii) in Definition~\ref{VSGn} say that $\btau_i = \tau_i^{-1}$ and make $VSG_n$ into a group. We will call an element in $VSG_n$ an \textit{extended virtual singular braid}.

\begin{theorem}
Let $n \in \N, n \ge 2$. The virtual singular braid monoid $VSB_n$ embeds in the virtual singular braid group $VSG_n$.
\end{theorem}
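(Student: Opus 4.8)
The plan is to apply Theorem~\ref{Monoid Embed in a Group} directly to the monoid presentation of $VSB_n$ in Definition~\ref{def:vsbn}, with $A$ the set of invertible generators $\{\sigma_i,\sigma_i^{-1},v_i \mid 1\le i\le n-1\}$ and $B=\{\tau_i \mid 1\le i\le n-1\}$. The first step is to check that, after replacing relation (6) by its equivalent form $v_jv_i\tau_j=\tau_iv_jv_i$ (equivalent modulo relation (2), since $v_i^2=v_j^2=1_n$), the defining relations of $VSB_n$ fall exactly into the four families $\mathbf{R_1},\mathbf{R_2},\mathbf{R_3},\mathbf{R_4}$ of the theorem. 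Concretely: relations (3), (4), (5) and the $\tau$-free instances of (9) are of type $\mathbf{R_1}$; relations (1) and (2) furnish $\mathbf{R_2}$, since each of $\sigma_i,\sigma_i^{-1},v_i$ has a two-sided inverse lying in $F^+(A)$ (namely $\sigma_i^{-1},\sigma_i,v_i$ respectively); relation (8) ($u=\sigma_i$), relation (7) ($u=\sigma_i\sigma_j$), relation (6) in the rewritten form ($u=v_jv_i$), and the instances of (9) involving exactly one $\tau_i$ — after transposing each so that the $\tau$-letter sits at the right end of the defining word, e.g.\ $\tau_i\sigma_j=\sigma_j\tau_i$ read as $\sigma_j\tau_i=\tau_i\sigma_j$ — are of type $\mathbf{R_3}$; and the instances $\tau_i\tau_j=\tau_j\tau_i$ of (9) with $|i-j|>1$ are of type $\mathbf{R_4}$.

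Granting this, Theorem~\ref{Monoid Embed in a Group} gives that $VSB_n$ embeds in the group $\cG$ with presentation $[A\cup B\cup\bar B \mid \mathbf{R}\cup\mathbf{R'}]$, where $\bar B=\{\btau_i\}$ and $\mathbf{R'}$ comprises $\btau_i\tau_i=1=\tau_i\btau_i$, a $\btau$-analogue $u\btau_j=\btau_k u$ for every $\mathbf{R_3}$ relation $u\tau_j=\tau_k u$, and $\btau_j\tau_k=\tau_k\btau_j$, $\btau_j\btau_k=\btau_k\btau_j$ for every $\mathbf{R_4}$ relation $\tau_j\tau_k=\tau_k\tau_j$. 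The last step is to identify this presentation of $\cG$ with that of $VSG_n$ in Definition~\ref{VSGn}. Indeed, the $\btau$-analogues of the $\mathbf{R_3}$ relations coming from (8), (7), and the rewritten (6) are precisely $\sigma_i\btau_i=\btau_i\sigma_i$, $\sigma_i\sigma_j\btau_i=\btau_j\sigma_i\sigma_j$, and $v_iv_j\btau_i=\btau_jv_iv_j$ (the last after relabelling $i\leftrightarrow j$, legitimate as these hold for all $|i-j|=1$); the $\btau$-analogues of the single-$\tau$ instances of (9) together with the two families coming from the $\mathbf{R_4}$ relations $\tau_i\tau_j=\tau_j\tau_i$ ($|i-j|>1$) assemble exactly into the clause ``$\btau_i\alpha_j=\alpha_j\btau_i$ for $|i-j|>1$, $\alpha_j\in\{\sigma_j,v_j,\tau_j,\btau_j\}$''; and $\btau_i\tau_i=1_n=\tau_i\btau_i$ is relation (ii). Hence $\cG$ and $VSG_n$ have the same generators and the same relations, so $\cG\cong VSG_n$ and the embedding $VSB_n\hookrightarrow VSG_n$ follows.

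I do not expect a genuine obstacle: the entire content is matching the presentation of $VSB_n$ against the template of Theorem~\ref{Monoid Embed in a Group} and then matching the resulting group against Definition~\ref{VSGn}. The only points requiring a little care are (a) that relation (6) must be put in the form $v_jv_i\tau_j=\tau_iv_jv_i$ before it qualifies as an $\mathbf{R_3}$ relation, whereas relation (7) is already in $\mathbf{R_3}$ form as written; and (b) the final bookkeeping that the commutation relations produced by $\mathbf{R'}$ out of the index-far part of (9) coincide with the single commutation clause for $\btau_i$ in Definition~\ref{VSGn}, with no spurious or missing relations (relations such as $\sigma_i^{-1}\btau_j=\btau_j\sigma_i^{-1}$ for $|i-j|>1$ being derived consequences, not part of the presentation).
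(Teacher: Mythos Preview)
Your proposal is correct and follows essentially the same approach as the paper: partition the defining relations of $VSB_n$ into the four families $\mathbf{R_1},\mathbf{R_2},\mathbf{R_3},\mathbf{R_4}$ of Theorem~\ref{Monoid Embed in a Group} with $A=\{\sigma_i,\sigma_i^{-1},v_i\}$ and $B=\{\tau_i\}$, apply the theorem, and identify the resulting group with $VSG_n$. In fact you are slightly more careful than the paper on two points: you note explicitly that relation (6) must be rewritten as $v_jv_i\tau_j=\tau_iv_jv_i$ to fit the $\mathbf{R_3}$ template (the paper simply lists it in its original form under $\mathbf{R_3}$), and you spell out the matching of the $\mathbf{R'}$ relations with the clauses of Definition~\ref{VSGn}, whereas the paper just asserts ``by the definition of $VSG_n$, it follows that $VSB_n$ embeds in $VSG_n$.''
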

\begin{proof} $VSB_n$ satisfies the conditions in Theorem \ref{Monoid Embed in a Group}. Indeed, $VSB_n$ has a presentation in the form $[A \cup B \, | \, {\bf R_1 \cup R_2 \cup R_3 \cup R_4} ]$, where
\[A=\{\sigma_i, \sigma_i^{-1},  v_i \mid 1 \le i \le n \}, B=\{\tau_{i} \mid 1 \le i \le n \}, \, \text{ and} \]
\begin{itemize}
\item ${\bf R_1}$ consists of the defining relations:
	\subitem $\sigma_i\sigma_j\sigma_i=\sigma_j\sigma_i\sigma_j, \,\,\,  v_iv_jv_i=v_jv_iv_j, \,\,\, v_i\sigma_jv_i=v_j\sigma_iv_j$, for $|i-j|=1$
	\subitem $\sigma_i\sigma_j=\sigma_j\sigma_i, \,\,\, v_iv_j=v_jv_i, \,\,\, \sigma_iv_j=v_j\sigma_i$, for  for $|i-j|>1$
	\subitem and similar relations involving $\sigma_i^{-1}$.
\item ${\bf R_2}$ consists of the relations
	\subitem $v_i^2=1_n$ and $\sigma_i\sigma_i^{-1}=1_n =\sigma_i^{-1}\sigma_i$, for all $1 \le i \le n-1$.
	
\item ${\bf R_3}$ consists of the  relations 
	\subitem $\sigma_i\tau_i=\tau_i\sigma_i$,
	\subitem $\sigma_i\sigma_j\tau_i=\tau_j\sigma_i\sigma_j, \,\,\,  v_i\tau_jv_i=v_j\tau_iv_j$, for $|i-j|=1$
	\subitem $\sigma_i\tau_j=\tau_j\sigma_i, \,\,\, v_i\tau_j=\tau_jv_i$, for $|i-j|>1$
\item ${\bf R_4}$ consists of the relations:
	\subitem $\tau_i\tau_j=\tau_j\tau_i$, for $|i-j|>1$.
\end{itemize}
Moreover, by the definition of $VSG_n$, it follows that $VSB_n$ embeds in $VSG_n$. 
\end{proof}

Since $\btau_i = \tau_i^{-1}$, the relations (i) in Definition~\ref{VSGn} can be derived from the analogous monoid relations involving the generators $\tau_i$. Therefore, $VSG_n$ is the group with the same group presentation as the monoid $VSB_n$. Specifically, $VSG_n$ is the group generated by $\{\sigma_i, v_i, \tau_i \, | \, 1 \leq i \leq n-1\}$, subject to the following relations:
\begin{itemize}
\item[(1)] two-point relations: $v_i^2=1_n$  and $\sigma_i\tau_{i}=\tau_i\sigma_{i}$ for all $1 \le i \le n-1$

\item[(2)] three-point relations, for all for $|i-j|=1$: 

$\sigma_i\sigma_j\sigma_i=\sigma_j\sigma_i\sigma_j, \,\,\, v_{i}v_{j}v_{i}=v_{j}v_{i}v_{j}$

$v_i\sigma_{j}v_i=v_{j}\sigma_iv_{j}, \,\,\, v_i\tau_{j}v_i=v_{j}\tau_iv_{j}$ and  $\sigma_i\sigma_j\tau_i=\tau_j\sigma_i\sigma_j$
\item[(3)] commuting relations: $g_ih_j = h_jg_i $ for $|i-j| >1$, where $g_i, h_i \in \{ \sigma_i, \tau_i, v_i  \}$.
\end{itemize}

There is an obvious group homomorphism that associates to each virtual singular braid in $VSG_n$ a permutation in $S_n$.
That is, let $\pi \co VSG_n \to S_n$ be the map defined on generators as follow:
\[ \pi(\sigma_i) =\pi(v_i) =\pi(\tau_i) =(i, i+1), \,\,\, \text{for all} \,\, \, 1 \le i \le n-1, \]
and extend it to all elements in $VSG_n$ such that $\pi$ is a homomorphism. Since the set $\{(i, i+1) \mid 1 \le i \le n-1\}$ generates $S_n$, the map $\pi$ is surjective. We can consider $S_n$ to be a subgroup of $VSG_n$ generated by $v_i$, for all $1 \le i \le n-1$. That is, we identity $v_i$ with the transposition $(i, i+1)$.

\begin{definition} \label{Virtual Singular Pure Braid Group}
We call the kernel of $\pi$ the \textit{virtual singular pure braid group}, and denote it by $VSPG_n$. We refer to an element in $VSPG_n$ as an \textit{extended virtual singular pure braid}. 
\end{definition}

Clearly, $VSPG_n$ is a normal subgroup of $VSG_n$ and $VSG_n/VSPG_n \cong S_n$. Our goal is to study the algebraic structure of the subgroup $VSPG_n$. 

\section{Another presentation for the virtual singular braid group} \label{another presentation}

From here on, all of the braids considered are extended virtual singular braids, thus elements of the group $VSG_n$. In this section we introduce a new presentation for $VSG_n$. This presentation uses as generators a special type of extended virtual singular pure braids, which we now define.   

 \begin{definition}\label{def:cstrings}
 The \textit{elementary fusing strings} $\mu_{i, i+1}^{\pm 1}, \, \gamma_{i, i+1},\, \bgamma_{i, i+1}$, where $1 \leq i \leq n-1$, are defined as follows:
   \begin{eqnarray} \label{FusingStrings}
   \mu_{i,i+1}: = \sigma_iv_i , \hspace{0.5cm} \mu_{i,i+1}^{-1}: = v_i \sigma_i^{-1} , \hspace{0.5cm} \gamma_{i, i+1}: = \tau_iv_i, \hspace{0.5cm} \bgamma_{i, i+1}: = v_i \btau_i .
   \end{eqnarray}
    \end{definition} 
  The elementary fusing strings are depicted in Figure~\ref{fig:FusingStrings}. Notice that these are pure braids, and that $\gamma_{i, i+1}^{-1} = \bgamma_{i,i+1}$.

\begin{figure}[ht] 
\[\mu_{i,i+1}  = \raisebox{-1.2cm} {\includegraphics[height=2.5cm]{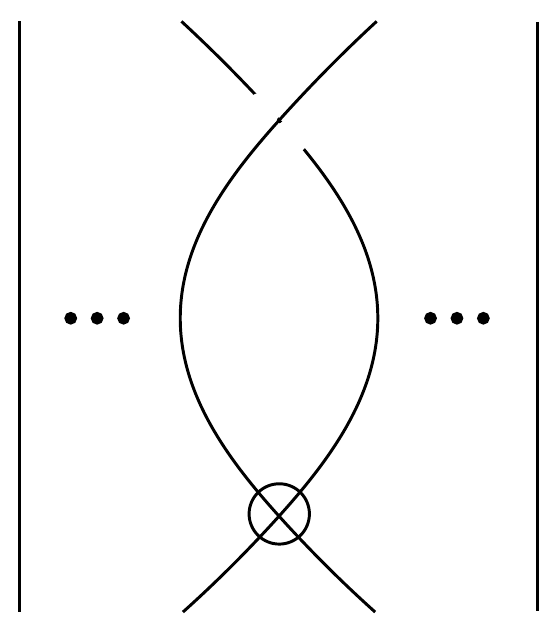}} \put(-48, 38){\fontsize{7}{7} $i \ \ \ \ i+1$} 
\hspace{1cm} \mu_{i,i+1}^{-1} = \raisebox{-1.2cm}{\includegraphics[height=2.5cm]{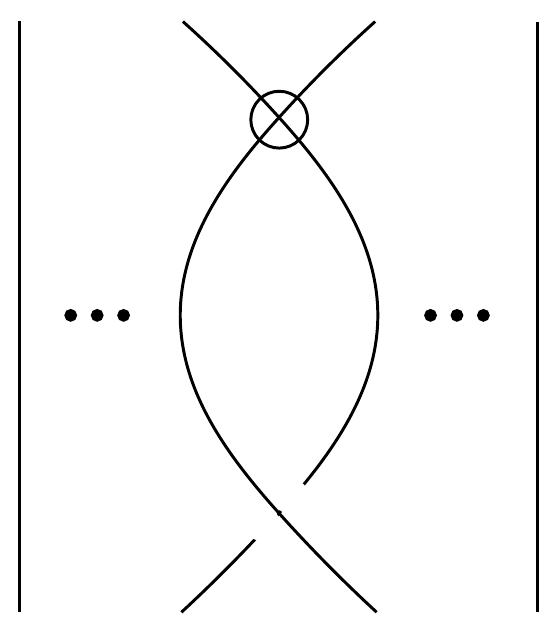}} \put(-48, 38){\fontsize{7}{7} $i \ \ i+1$}\]
\[ \gamma_{i,i+1}=  \raisebox{-1.2cm} {\includegraphics[height=2.5cm]{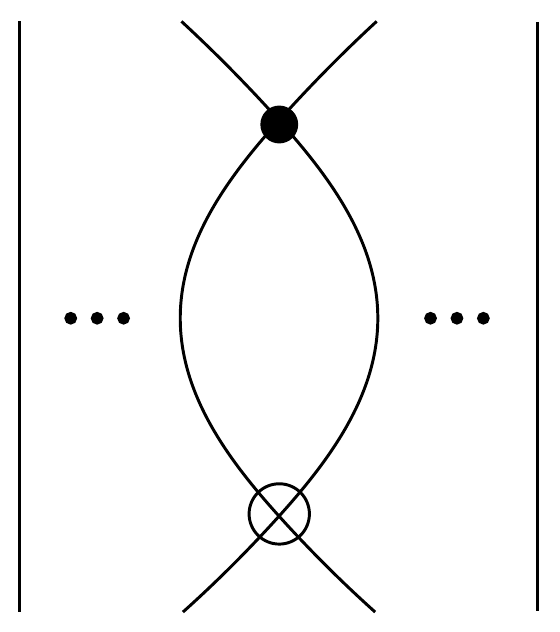}} \put(-48, 38){\fontsize{7}{7} $i \ \ \ \  i+1$} \
 \hspace{1cm} \bgamma_{i,i+1} = \raisebox{-1.2cm}{\includegraphics[height=2.5cm]{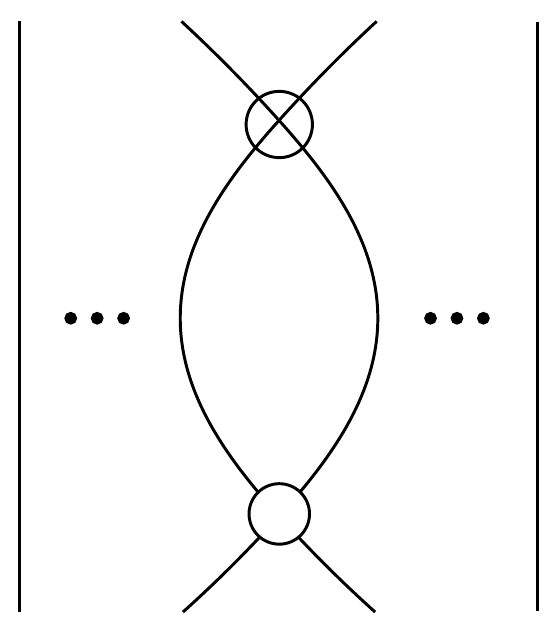}} \put(-48, 38){\fontsize{7}{7} $i \ \ i+1$} \]
\caption{The braids $\mu_{i, i+1},  \, \mu_{i, i+1}^{-1}, \gamma_{i, i+1}$ and $\bgamma_{i, i+1}$} 
\label{fig:FusingStrings}
\end{figure}

Elementary fusing strings are written as `interactions' between two adjacent strands $i$ and $i+1$ in a braid.  We can generalize the elementary fusing strings to involve any two non-adjacent strands $i$ and $j$ in a braid, as we now define.

\begin{definition} \label{Gammaij}
Let $1 \le i < j \le n-1$. The \textit{generalized fusing strings} $\mu_{ij}, \gamma_{ij}, \bgamma_{ij}$ and $\mu_{ji}, \gamma_{ji}, \bgamma_{ji}$ are elements of $VSPG_n$ defined as follows:
\[\mu_{ij}:=(v_{j-1}v_{j-2} \cdots v_{i+1})\mu_{i,i+1}(v_{i+1} \cdots v_{j-2}v_{j-1}),\]
\[\gamma_{ij}:=(v_{j-1}v_{j-2} \cdots v_{i+1})\gamma_{i,i+1}(v_{i+1} \cdots v_{j-2}v_{j-1}),\]
\[  \bgamma_{ij} := (v_{j-1}v_{j-2} \cdots v_{i+1})\bgamma_{i,i+1}(v_{i+1} \cdots v_{j-2}v_{j-1}), \]
\[\mu_{ji}:=(v_{j-1}v_{j-2} \cdots v_{i+1})v_i\mu_{i,i+1}v_i(v_{i+1} \cdots v_{j-2}v_{j-1}),\]
\[\gamma_{ji}:=(v_{j-1}v_{j-2} \cdots v_{i+1})v_i\gamma_{i,i+1}v_i(v_{i+1} \cdots v_{j-2}v_{j-1}),\]
\[  \bgamma_{ji} := (v_{j-1}v_{j-2} \cdots v_{i+1})v_i\bgamma_{i,i+1}v_i(v_{i+1} \cdots v_{j-2}v_{j-1}).  \]
\end{definition}

Note that $\bgamma_{ij}=\gamma_{ij}^{-1}$ and $\bgamma_{ji}=\gamma_{ji}^{-1}$. Moreover,
\[\mu_{ij}^{-1}=(v_{j-1}v_{j-2} \cdots v_{i+1})\mu_{i,i+1}^{-1}(v_{i+1} \cdots v_{j-2}v_{j-1}) \]
and
\[\mu_{ji}^{-1}=(v_{j-1}v_{j-2} \cdots v_{i+1})v_i\mu_{i,i+1}^{-1}v_i(v_{i+1} \cdots v_{j-2}v_{j-1}). \]
 Also, $\mu_{i+1,i}=v_i\mu_{i,i+1}v_i = v_i \sigma_i$ and $\gamma_{i+1,i}=v_i\gamma_{i,i+1}v_i=v_i\tau_i$. See Figure \ref{fig:GenFusingStrings} for the geometric representations of $\mu_{ij}, \mu_{ji}, \gamma_{ij}$ and $\gamma_{ji}$, where $i < j$.

\begin{figure}[ht]
\[\mu_{ij}= \raisebox{-2cm}{\includegraphics[height=4cm]{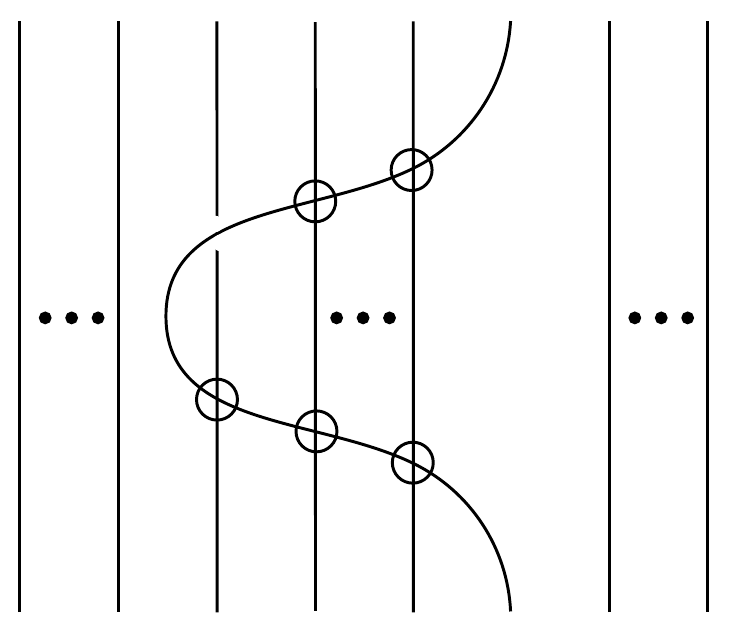}} \put(-98,60){\fontsize{7}{7} $i \hspace{1.8cm} j$} 
\hspace{1cm} \mu_{ji} = \raisebox{-2cm}{\includegraphics[height=4cm]{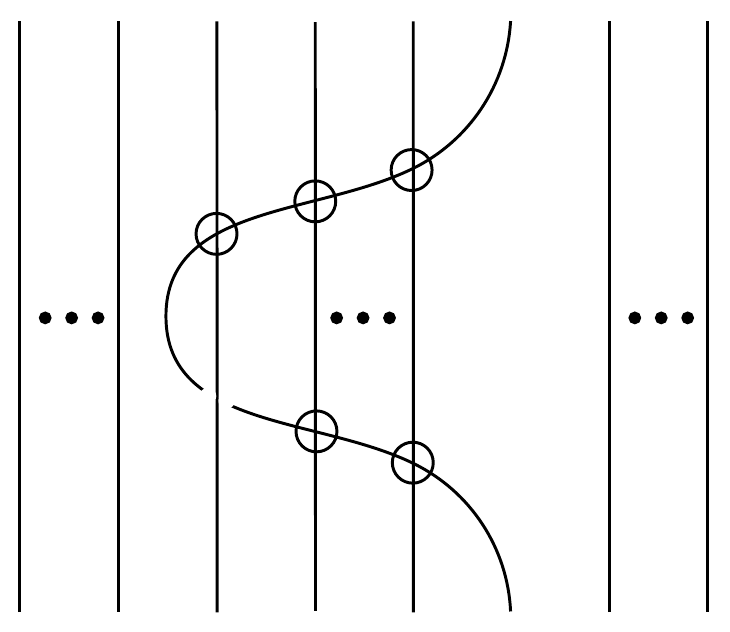}} \put(-98, 60){\fontsize{7}{7} $i \hspace{1.8cm} j$} \]

\[\gamma_{ij} = \raisebox{-2cm}{\includegraphics[height=4cm]{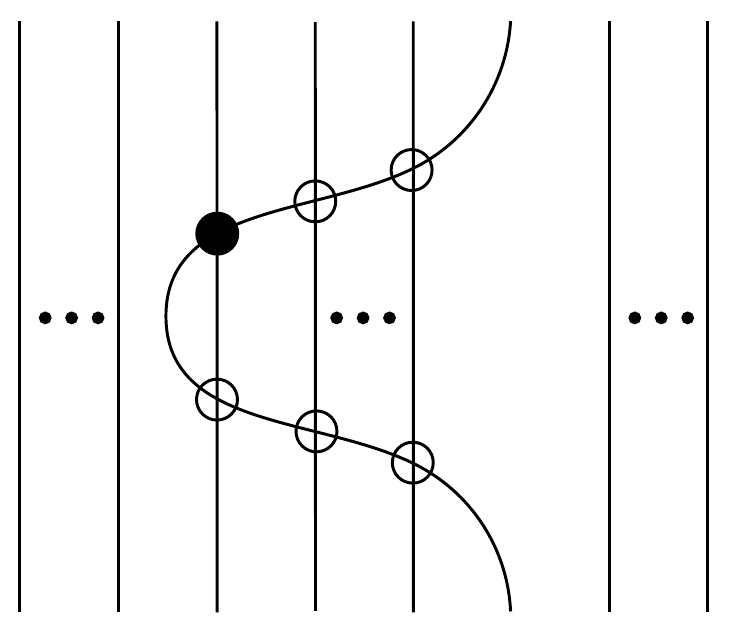} } \put(-98,60){\fontsize{7}{7} $i \hspace{1.8cm} j$} 
\hspace{1cm} \gamma_{ji} = \raisebox{-2cm}{\includegraphics[height=4cm]{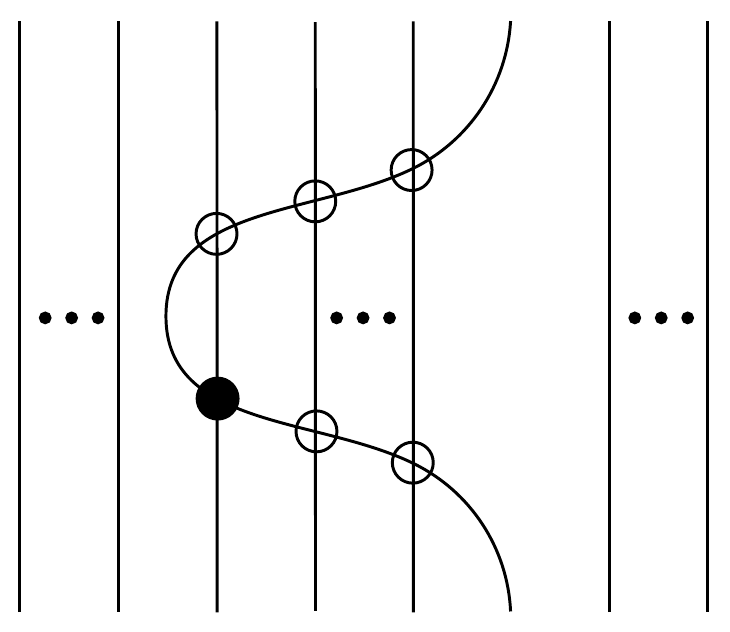}} \put(-98,60){\fontsize{7}{7} $i \hspace{1.8cm} j$}  \]
\caption{Braids representing $\mu_{ij}, \, \mu_{ji}, \, \gamma_{ij}$ and $\gamma_{ji}$, for $ i < j$ }
 \label{fig:GenFusingStrings}
\end{figure}

We remark that the elements $\mu_{ij}$ and $\mu_{ji}$ are called `connecting strings' in~\cite{KL2}, while $\mu_{ij}, \mu_{ji}$ and $\gamma_{ij}, \gamma_{ji}$ are called `fusing strings' in~\cite{Caprau}. We are using here the terminology from~\cite{Caprau}. 

\begin{lemma} \label{GammaBarRelations}
The following relations involving elementary fusing strings hold in $VSG_n$:
\begin{enumerate}
\item For all $1 \le i,j \le n-1$, $|i-j|=1$, \[v_i\mu_{j,j+1}v_i=v_j\mu_{i,i+1}v_j \quad \text{ and } \quad  v_i\gamma_{j,j+1}v_i=v_j\gamma_{i,i+1}v_j.\]
\item For all $1 \le i,j \le n-1$, $|i-j|=1$,
 \begin{eqnarray*}
 \mu_{j,j+1} (v_j\mu_{i,i+1} v_j) \mu_{i,i+1} &=& \mu_{i,i+1} (v_j\mu_{i,i+1}v_j) \mu_{j,j+1} \\
\mu_{j,j+1} (v_j\mu_{i,i+1}v_j) \gamma_{i,i+1} &=& \gamma_{i,i+1} (v_j\mu_{i,i+1}v_j) \mu_{j,j+1}.
\end{eqnarray*}
\item For all $1 \le i,j \le n-1$, $|i-j|>1$,
\[\alpha_i\beta_j=\beta_j\alpha_i, \quad \text{ where } \, \alpha_i,\beta_i \in \{\mu_{i,i+1},\gamma_{i,i+1}, v_i\}.\]
\item For all $1 \le i,j \le n-1$,
\[\mu_{i,i+1}v_i\gamma_{i,i+1}=\gamma_{i,i+1}v_i\mu_{i,i,+1}.\]
\end{enumerate}
\end{lemma}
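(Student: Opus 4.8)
The plan is to prove each of the four identities by unwinding the definitions in~\eqref{FusingStrings} and reducing everything to the defining relations of $VSG_n$ listed after Definition~\ref{VSGn}. Since $\mu_{i,i+1} = \sigma_i v_i$, $\gamma_{i,i+1} = \tau_i v_i$, and $v_i^2 = 1_n$, every expression in the lemma becomes a word in the $\sigma_i$, $\tau_i$, $v_i$ once we substitute, and the strategy throughout is: push the $v$'s around using the mixed three-point relations $v_i\sigma_j v_i = v_j\sigma_i v_j$, $v_i\tau_j v_i = v_j\tau_i v_j$, $v_iv_jv_i = v_jv_iv_j$, then collapse adjacent pairs $v_iv_i = 1_n$, and finally recognize the result as the same kind of word. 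I would do the four parts roughly in the order (1), (3), (4), (2), since (1) is essentially immediate and feeds into the others.

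For part (1): substitute $\mu_{j,j+1} = \sigma_j v_j$ and compute $v_i \sigma_j v_j v_i$. Using $v_i\sigma_j v_i = v_j \sigma_i v_j$ (valid since $|i-j|=1$) after inserting $v_i v_i = 1_n$, we get $v_i\sigma_j v_j v_i = (v_i\sigma_j v_i)(v_i v_j v_i) = (v_j\sigma_i v_j)(v_j v_i v_j) = v_j\sigma_i v_j v_j v_i v_j = v_j\sigma_i v_i v_j = v_j (\sigma_i v_i) v_j = v_j \mu_{i,i+1} v_j$, and the $\gamma$ case is identical with $\tau$ in place of $\sigma$. For part (3): when $|i-j|>1$ all of $\sigma_i,\tau_i,v_i$ commute with all of $\sigma_j,\tau_j,v_j$ by the commuting relations, so $\mu_{i,i+1} = \sigma_i v_i$ and $\gamma_{i,i+1} = \tau_i v_i$ and $v_i$ pairwise commute with $\mu_{j,j+1}$, $\gamma_{j,j+1}$, $v_j$ by moving one letter past another. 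For part (4): $\mu_{i,i+1} v_i \gamma_{i,i+1} = \sigma_i v_i v_i \tau_i v_i = \sigma_i \tau_i v_i = \tau_i \sigma_i v_i$ using the two-point relation $\sigma_i\tau_i = \tau_i\sigma_i$, and then run the computation backwards to recognize $\tau_i\sigma_i v_i = \tau_i v_i v_i \sigma_i v_i = \gamma_{i,i+1} v_i \mu_{i,i+1}$.

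Part (2) is the substantive one and the main obstacle. After substitution, using part (1) to rewrite $v_j\mu_{i,i+1}v_j = v_i\mu_{j,j+1}v_i = v_i\sigma_j v_j v_i$, the first identity of (2) becomes a claim that two long words in $\sigma_i,\sigma_j,v_i,v_j$ are equal, and the natural route is to show both sides equal a common word obtained from the braid relation $\sigma_i\sigma_j\sigma_i = \sigma_j\sigma_i\sigma_j$ together with the mixed relation $v_i\sigma_j v_i = v_j\sigma_i v_j$; the second identity similarly rests on $\sigma_i\sigma_j\tau_i = \tau_j\sigma_i\sigma_j$. Concretely I expect to rewrite $\mu_{j,j+1}(v_j\mu_{i,i+1}v_j)\mu_{i,i+1}$, carefully commute the virtual generators to one side, collapse $v$-pairs, and arrive at an expression of the form (virtual word)$\cdot\sigma_i\sigma_j\sigma_i\cdot$(virtual word); applying the braid relation and reversing the manipulation on the other factor yields the right-hand side. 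The bookkeeping of which $v$'s cancel and which mixed relation to apply at each stage is where care is needed, but no new relations beyond those in the presentation of $VSG_n$ are required — it is a finite, mechanical verification, just a somewhat long one. An alternative, cleaner framing: observe that conjugation by $v_i$ (equivalently by the transposition it represents) is an automorphism of $VSG_n$, so identities like (2) can be pulled back from the analogous (known) braid-type relations among the $\sigma$'s and $\tau$'s after conjugating, which organizes the computation and is the approach I would ultimately write up.
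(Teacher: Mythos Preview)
Your proposal is correct, and the computations for parts (1), (3), and (4) are clean and complete; part (2) is only sketched, but the outline is sound (indeed, after your substitution one finds that both sides of the first identity in (2) reduce to $\sigma_i\sigma_j\sigma_i\, v_iv_jv_i = \sigma_j\sigma_i\sigma_j\, v_iv_jv_i$, and the second identity reduces analogously via $\sigma_i\sigma_j\tau_i = \tau_j\sigma_i\sigma_j$), so the ``mechanical verification'' you promise really does go through.

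Your route, however, differs from the paper's. The paper does not redo any of these calculations: it simply observes that all four families of relations were already established in the monoid $VSB_n$ in~\cite[Lemmas~5,~7,~8,~9]{Caprau}, and then invokes the embedding $VSB_n \hookrightarrow VSG_n$ to conclude they hold in the group. What your approach buys is self-containment --- a reader need not consult the earlier paper --- at the cost of reproducing computations that already exist in the literature. What the paper's approach buys is brevity and a clean separation of concerns: the combinatorial work lives in the monoid paper, and here one only needs the embedding. If you want your write-up to stand alone, carry out part (2) in full along the lines you indicated; otherwise, citing~\cite{Caprau} as the paper does is entirely adequate.
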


\begin{proof}
It was proved in~\cite[Lemmas 5, 7, 8, 9]{Caprau} that all of these relations hold in the monoid $VSB_n$. (We warn the reader that the elements $\mu_{i, i+1}$ and $\gamma_{i, i+1}$ are denoted  $\mu_i$ and respectively $\gamma_i$ in~\cite{Caprau}). Therefore, these relations hold in the group $VSG_n$, as well. 
\end{proof}

We remark that there are other similar relations as those given in Lemma~\ref{GammaBarRelations}, but involving inverses of the elementary fusing strings: $\mu_{i, i+1}^{-1}$ and $\bgamma_{i, i+1}$. For example, we have that $v_i\mu_{j,j+1}^{-1}v_i=v_j\mu_{i,i+1}^{-1}v_j$ and $v_i\bgamma_{j,j+1}v_i=v_j\bgamma_{i,i+1}v_j$, for all $1 \le i,j \le n-1$, $|i-j|=1$. These relations are obtained by taking the inverses of the first two identities in Lemma~\ref{GammaBarRelations}. If we start with the fourth relation in Lemma~\ref{GammaBarRelations}, multiply both sides on the left and on the right by $\bgamma_{i,i+1}$ and use that $\gamma_{i, i+1}^{-1} = \bgamma_{i, i+1}$, we obtain $\bgamma_{i,i+1}\mu_{j,j+1} (v_j\mu_{i,i+1}v_j)  = (v_j\mu_{i,i+1}v_j) \mu_{j,j+1}\bgamma_{i,i+1}$, for all $1 \le i,j \le n-1$, $|i-j|=1$. Similarly, the last relation in the lemma above implies that $\bgamma_{i,i+1} \mu_{i,i+1} v_i =  v_i \mu_{i,i,+1} \bgamma_{i,i+1}$ holds, for all $1 \le i,j \le n-1$. Of course, there are other similar relations involving the elements $\mu_{i, i+1}^{-1}$.

We make use of Lemma~\ref{GammaBarRelations} to obtain a presentation for the virtual singular braid group $VSG_n$ using as generators the elementary fusing strings together with the virtual generators, as explained by the following theorem.

\begin{theorem} \label{VSGn Gamma Presentation}
The virtual singular braid group $VSG_n$ has an alternative presentation with generators $\{\mu_{i,i+1},\gamma_{i,i+1},v_i \mid 1 \le i \le n-1\}$ and subject to the following relations:
\begin{eqnarray}
v_i^2 &=& 1_n \label{eqn: G'n 1}\\
v_iv_jv_i &=& v_jv_iv_j,  \text{ for }  |i-j|=1
	\label{eqn: G'n 2} \\
 v_i\mu_{j,j+1}v_i &=& v_j\mu_{i,i+1}v_j, \text{ for }  |i-j|=1
         \label{eqn: G'n 3} \\
v_i\gamma_{j,j+1}v_i &=&v_j\gamma_{i,i+1}v_j,
	\text{ for }  |i-j|=1 \label{eqn: G'n 4} \\
 \mu_{j,j+1} (v_j\mu_{i,i+1}v_j) \mu_{i,i+1} &=& \mu_{i,i+1} (v_j\mu_{i,i+1}v_j) \mu_{j,j+1},
	\text{ for } |i-j|=1 \label{eqn: G'n 5} \\
\mu_{j,j+1} (v_j\mu_{i,i+1}v_j) \gamma_{i,i+1}&=& \gamma_{i,i+1} (v_j\mu_{i,i+1}v_j) \mu_{j,j+1},
	\text{ for } |i-j|=1 \label{eqn: G'n 6} \\
 \mu_{i,i+1}v_i\gamma_{i,i+1} &=& \gamma_{i,i+1}v_i\mu_{i,i+1} 
         \label{eqn: G'n 7} \\
\alpha_i\beta_j &=& \beta_j\alpha_i,
	\text{ for } |i-j|>1 \label{eqn: G'n 8}
\end{eqnarray}
where $\alpha_i,\beta_i \in \{ \mu_{i,i+1},\gamma_{i,i+1},v_i \}$.
\end{theorem}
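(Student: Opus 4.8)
The plan is to prove the two presentations define isomorphic groups by exhibiting mutually inverse homomorphisms between $VSG_n$, equipped with the presentation recorded just before the statement (generators $\sigma_i,v_i,\tau_i$ and the two-point, three-point and commuting relations listed there), and the abstract group $G$ defined by the presentation in the statement (generators $\mu_{i,i+1},\gamma_{i,i+1},v_i$ and relations \eqref{eqn: G'n 1}--\eqref{eqn: G'n 8}). The translation dictionary is the obvious one suggested by Definition~\ref{def:cstrings}: in one direction $\mu_{i,i+1}\mapsto\sigma_iv_i$ and $\gamma_{i,i+1}\mapsto\tau_iv_i$; in the other, using $v_i^2=1_n$, $\sigma_i\mapsto\mu_{i,i+1}v_i$ and $\tau_i\mapsto\gamma_{i,i+1}v_i$.

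First I would define $\phi\co G\to VSG_n$ on generators by $\phi(\mu_{i,i+1})=\sigma_iv_i$, $\phi(\gamma_{i,i+1})=\tau_iv_i$, $\phi(v_i)=v_i$, and check that $\phi$ respects \eqref{eqn: G'n 1}--\eqref{eqn: G'n 8}. Relations \eqref{eqn: G'n 1} and \eqref{eqn: G'n 2} are among the defining relations of $VSG_n$; relations \eqref{eqn: G'n 3}, \eqref{eqn: G'n 4}, \eqref{eqn: G'n 5}, \eqref{eqn: G'n 6}, \eqref{eqn: G'n 7} are precisely the identities of Lemma~\ref{GammaBarRelations} once one recalls that $\mu_{i,i+1}=\sigma_iv_i$ and $\gamma_{i,i+1}=\tau_iv_i$; and \eqref{eqn: G'n 8} is immediate from the commuting relations of $VSG_n$ for $|i-j|>1$. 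Hence $\phi$ is a well-defined homomorphism.

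Next I would define $\psi\co VSG_n\to G$ on generators by $\psi(\sigma_i)=\mu_{i,i+1}v_i$, $\psi(\tau_i)=\gamma_{i,i+1}v_i$, $\psi(v_i)=v_i$, and verify that $\psi$ respects each defining relation of $VSG_n$. The relation $v_i^2=1_n$ is \eqref{eqn: G'n 1} and $v_iv_jv_i=v_jv_iv_j$ is \eqref{eqn: G'n 2}; $\sigma_i\tau_i=\tau_i\sigma_i$ becomes, after substituting and cancelling a $v_i$ on the right, exactly \eqref{eqn: G'n 7}. For the remaining three-point relations one substitutes and then shuffles the $v_i$'s using \eqref{eqn: G'n 1}--\eqref{eqn: G'n 6}: for instance $v_i\sigma_jv_i=v_j\sigma_iv_j$ follows from \eqref{eqn: G'n 3} together with the identity $v_jv_iv_jv_i=v_iv_j$ (a consequence of \eqref{eqn: G'n 1} and \eqref{eqn: G'n 2}), $v_i\tau_jv_i=v_j\tau_iv_j$ follows in the same way from \eqref{eqn: G'n 4}, and $\sigma_i\sigma_j\sigma_i=\sigma_j\sigma_i\sigma_j$ and $\sigma_i\sigma_j\tau_i=\tau_j\sigma_i\sigma_j$ reduce, after rewriting in terms of $\mu_{i,i+1},\gamma_{i,i+1}$ and conjugating by suitable products of the $v_k$'s, to \eqref{eqn: G'n 5} and \eqref{eqn: G'n 6} respectively. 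The commuting relations $g_ih_j=h_jg_i$ for $|i-j|>1$ follow from \eqref{eqn: G'n 8} (which includes $v_iv_j=v_jv_i$). Thus $\psi$ is well defined, and one checks $\phi\circ\psi=\mathrm{id}$ and $\psi\circ\phi=\mathrm{id}$ directly on generators using $v_i^2=1_n$: $\psi(\phi(\mu_{i,i+1}))=\mu_{i,i+1}v_iv_i=\mu_{i,i+1}$, $\phi(\psi(\sigma_i))=\sigma_iv_iv_i=\sigma_i$, and similarly for $\gamma_{i,i+1},\tau_i,v_i$. Hence $\phi$ and $\psi$ are mutually inverse isomorphisms.

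I expect the only real obstacle to be the well-definedness of $\psi$ — concretely, deriving the braid relation $\sigma_i\sigma_j\sigma_i=\sigma_j\sigma_i\sigma_j$ and the mixed relation $\sigma_i\sigma_j\tau_i=\tau_j\sigma_i\sigma_j$ from \eqref{eqn: G'n 1}--\eqref{eqn: G'n 8}. This is a matter of careful word manipulation, combining \eqref{eqn: G'n 5} and \eqref{eqn: G'n 6} with the mixed virtual relations \eqref{eqn: G'n 3} and the braid relation \eqref{eqn: G'n 2} for the $v_i$'s to absorb and reposition the stray factors of $v_i$ and $v_j$; everything else is routine.
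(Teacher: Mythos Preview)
Your proposal is correct and follows the standard route for showing that two presentations define the same group: exhibit mutually inverse homomorphisms and check well-definedness on relations. You have correctly identified that the only non-routine verifications are the images under $\psi$ of the braid relation $\sigma_i\sigma_j\sigma_i=\sigma_j\sigma_i\sigma_j$ and the mixed relation $\sigma_i\sigma_j\tau_i=\tau_j\sigma_i\sigma_j$, and these do reduce to \eqref{eqn: G'n 5} and \eqref{eqn: G'n 6} after inserting factors $v_iv_i=1_n$ and repeatedly applying \eqref{eqn: G'n 2} and \eqref{eqn: G'n 3} to push the virtual generators to one side.

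The paper takes a shorter, less self-contained route: it invokes the already-established fact (from~\cite{Caprau}) that the \emph{monoid} $VSB_n$ admits this same alternative presentation, and then observes that since $VSG_n$ is by definition the group with the same presentation as the monoid $VSB_n$, the alternative presentation carries over to the group. The paper explicitly notes that one could instead argue ``in the same manner as the analogous statement for $VSB_n$ was proved in~\cite{Caprau}'', which is precisely what you do. So your argument is the direct, from-scratch version of what the paper defers to the earlier reference; it buys independence from~\cite{Caprau} at the cost of having to carry out the word manipulations yourself.
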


\begin{proof}
It was proved in~\cite[Theorem 11]{Caprau} that the monoid $VSB_n$ has a presentation using as generators the elementary fusing strings $\mu_{i, i+1}^{\pm 1}, \gamma_{i, i+1}$ together with the virtual generators $v_i$, where $1 \le i \le n-1$, and subject to the same set of relations as those listed here in the statement of this theorem. Since the monoid $VSB_n$ embeds in the group with the same presentation, the statement of Theorem~\ref{VSGn Gamma Presentation} follows immediately. Alternatively, the theorem can be proved in the same manner as the analogous statement for $VSB_n$ was proved in~\cite{Caprau} .
\end{proof}

We remind the reader that we identify the generators $v_i$ with the transpositions $(i,i+1)$. The group $S_n$ acts on $VSG_n$ by conjugation, and we proceed now to study this action on the set of the generalized fusing strings.

\begin{lemma} \label{lemma:action}
Let $1 \le i < j \le n$. The following conjugation rules hold in $VSG_n$:
\begin{enumerate}
\item[(i)] $v_i\mu_{kl}v_i=\mu_{kl}$, \quad $v_i\gamma_{kl}v_i=\gamma_{kl}$, \quad for $|k-i|>1,|l-i|>1$
\item[(ii)] $v_{i-1}\mu_{i,i+1}v_{i-1}=\mu_{i-1,i+1}, \quad
	v_{i-1}\gamma_{i,i+1}v_{i-1}=\gamma_{i-1,i+1}$ \\ $v_{i-1}\mu_{i+1,i}v_{i-1}=\mu_{i+1,i-1}, \quad 
	v_{i-1}\gamma_{i+1,i}v_{i-1}=\gamma_{i+1,i-1}$
\item[(iii)] $v_i\mu_{i,i+1}v_i=\mu_{i+1,i}, \quad
	v_i\gamma_{i,i+1}v_i=\gamma_{i+1,i}$ \\
	$v_i\mu_{i+1,i}v_i=\mu_{i,i+1}, \quad v_i\gamma_{i+1,i}v_i=\gamma_{i,i+1}$
\item[(iv)] $v_{i+1}\mu_{i,i+1}v_{i+1}=\mu_{i,i+2}, \quad
	v_{i+1}\gamma_{i,i+1}v_{i+1}=\gamma_{i,i+2}$ \\ $v_{i+1}\mu_{i+1,i}v_{i+1}=\mu_{i+2,i}, \quad v_{i+1}\gamma_{i+1,i}v_{i+1}=\gamma_{i+2,i}$.
\end{enumerate}
There are similar relations with $\mu_{ij}$ and $\gamma_{ij}$ being replaced by $\mu_{ij}^{-1}$ and $\bgamma_{ij}$, respectively.
\end{lemma}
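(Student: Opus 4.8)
The plan is to prove Lemma~\ref{lemma:action} by direct computation from the definitions of the generalized fusing strings (Definition~\ref{Gammaij}), using the braid relations among the $v_i$'s, namely $v_i^2 = 1_n$, the braid relation $v_iv_jv_i = v_jv_iv_j$ for $|i-j|=1$, and the far-commutativity $v_iv_j = v_jv_i$ for $|i-j|>1$. The key observation is that each generalized fusing string $\mu_{kl}$ (and similarly $\gamma_{kl}$) is of the form $w \, \mu_{k,k+1} \, w^{-1}$ (or with an extra $v_k$ conjugation in the $\mu_{lk}$, $k<l$, case), where $w$ is a specific word in the $v$'s, so conjugating by $v_i$ amounts to rewriting $v_i w$ using only $S_n$-relations. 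Since the $v_i$ generate a copy of $S_n$ inside $VSG_n$, and $\mu_{kl}$, $\gamma_{kl}$ depend only on the pair $(k,l)$ in the way that a permutation moves the ``active'' pair of strands, each identity reduces to a permutation-level check together with a short braid-word manipulation.

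Concretely, I would proceed case by case as organized in the statement. For (i), when $|k-i|>1$ and $|l-i|>1$, the transposition $v_i$ commutes with every $v_m$ appearing in the defining word of $\mu_{kl}$ and with $\mu_{k,k+1}$ itself (by relation~\eqref{eqn: G'n 8}), so $v_i \mu_{kl} v_i = \mu_{kl}$ follows by pushing $v_i$ through and cancelling $v_i^2 = 1_n$. For (ii), (iii), (iv), I would expand both sides using Definition~\ref{Gammaij}: for instance in (iii), $v_i\mu_{i,i+1}v_i = v_i(\sigma_i v_i)v_i = v_i\sigma_i = \mu_{i+1,i}$ directly from the remark following Definition~\ref{Gammaij} that $\mu_{i+1,i} = v_i\mu_{i,i+1}v_i$; the second line of (iii) is then immediate since $v_i^2 = 1_n$. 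For (ii) and (iv), the point is that prepending/appending $v_{i-1}$ or $v_{i+1}$ to the word $(v_{j-1}\cdots v_{i+1})\mu_{i,i+1}(v_{i+1}\cdots v_{j-1})$ lengthens the ``rerouting'' word by exactly one letter in the correct position, matching the definition of $\mu_{i-1,i+1}$, $\mu_{i,i+2}$, etc. One does need to check that $v_{i+1}$ (resp. $v_{i-1}$) slides correctly past $\mu_{i,i+1}$ (resp. past the $v$-word) using the braid relation $v_{i+1}v_i v_{i+1} = v_i v_{i+1} v_i$ — this is the only place where a genuine three-letter braid move is invoked rather than pure commutation. The analogous relations with $\mu_{ij}^{-1}$ and $\bgamma_{ij}$ follow by taking inverses, using $\bgamma_{ij} = \gamma_{ij}^{-1}$ and $(v_i x v_i)^{-1} = v_i x^{-1} v_i$.

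The main obstacle, such as it is, is purely bookkeeping: keeping track of which index $i$ falls where relative to the pair $(k,l)$ or $(i,i+1)$ in the subscripts of the $v$-words, and making sure the edge cases (when $i$ equals $j-1$, or $i+1$, or $i$, in the defining word) are handled without index collisions. There is no deep idea required beyond the fact that the $v_i$ generate $S_n$ and that the generalized fusing strings are defined precisely so as to be $S_n$-equivariant; indeed one could alternatively phrase the whole proof as: ``$\mu_{kl}$ is the image of $\mu_{i,i+1}$ under a permutation carrying $\{i,i+1\}$ to $\{k,l\}$ with the specified orientation, so conjugation by $v_i$ carries $\mu_{kl}$ to $\mu_{v_i(k),v_i(l)}$,'' and then read off each case from how the transposition $(i,i+1)$ acts on the relevant ordered pair. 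I would present the explicit word computations for (ii)--(iv) since they are short, and dispatch (i) and the inverse/$\bgamma$ variants with a sentence each.
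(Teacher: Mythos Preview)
Your approach is essentially the same as the paper's: the paper dispatches (i) by appeal to the commuting relations of $VSG_n$ and (ii)--(iv) by direct appeal to Definition~\ref{Gammaij}, which is exactly your plan, only with more detail filled in.

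One small caveat on your treatment of (i): your claim that ``$v_i$ commutes with every $v_m$ appearing in the defining word of $\mu_{kl}$'' is not literally true when $k < i < l$ with $|k-i|>1$ and $|l-i|>1$ (e.g.\ $k=1$, $i=3$, $l=5$: the word for $\mu_{15}$ contains $v_2,v_3,v_4$, and $v_3$ does not commute with $v_2$ or $v_4$). In that situation you need the braid relation $v_iv_{i\pm 1}v_i = v_{i\pm 1}v_iv_{i\pm 1}$ to slide $v_i$ through, not just far-commutativity; after two such moves the extra $v_i$'s cancel and you still get $\mu_{kl}$ back. This is exactly the kind of ``index collision'' edge case you flagged as the main bookkeeping obstacle, so just make sure your written proof of (i) covers it (the paper's own one-line proof is arguably equally loose on this point). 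Your alternative $S_n$-equivariance phrasing handles this case cleanly, but beware that stating it as ``conjugation by $v_i$ carries $\mu_{kl}$ to $\mu_{v_i(k),v_i(l)}$'' is precisely the content of Corollary~\ref{Sn Action-VS}, which is \emph{deduced from} this lemma---so if you use that formulation, ground it in the explicit word manipulation rather than invoking the corollary.
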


\begin{proof}
 Relations (i) follow directly from the commuting relations of $VSB_n$, and hence of $VSG_n$. Relations (ii) through (iv) follow from the definition of generalized fusing strings (recall Definition~\ref{Gammaij}). 
  \end{proof}

Due to the relations in Lemma~\ref{lemma:action}, the following corollary follows at once. 

\begin{corollary} \label{Sn Action-VS}
The group $S_n$ acts by conjugation on the set $\{ \mu_{ij}^{\pm 1}, \gamma_{ij}, \bgamma_{ij} \mid 1 \le i \ne j \le n \}$ by permuting the indices of the generalized fusing strings. That is,
\[\alpha \,  \mu_{ij} \, \alpha^{-1} = \mu_{\alpha(i) \alpha(j)}\,\,\, \text{and} \,\,\, \alpha \,  \gamma_{ij} \, \alpha^{-1} = \gamma_{\alpha(i) \alpha(j)}, \]
\[\alpha \,  \mu_{ij}^{-1} \, \alpha^{-1} = \mu_{\alpha(i) \alpha(j)}^{-1}\,\,\, \text{and} \,\,\, \alpha \,  \bgamma_{ij} \, \alpha^{-1} = \bgamma_{\alpha(i) \alpha(j)}, \]
where $\alpha \in S_n$. Moreover, this action is transitive.
\end{corollary}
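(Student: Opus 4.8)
The plan is to reduce the general statement to the generating transpositions $v_i = (i,i+1)$ and then invoke Lemma~\ref{lemma:action}. Since $S_n$ is generated by the adjacent transpositions $v_1, \dots, v_{n-1}$ and conjugation is multiplicative — that is, $(\alpha\beta)\,x\,(\alpha\beta)^{-1} = \alpha\bigl(\beta x \beta^{-1}\bigr)\alpha^{-1}$ — it suffices to establish the identity $v_i\, \mu_{kl}\, v_i = \mu_{v_i(k)\,v_i(l)}$ for every adjacent transposition $v_i$ and every pair of distinct indices $k,l$, and likewise for $\mu_{kl}^{-1}$, $\gamma_{kl}$, $\bgamma_{kl}$. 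First I would record this reduction explicitly: if the one-step rule holds for all $v_i$, then writing an arbitrary $\alpha \in S_n$ as a word $v_{i_1} v_{i_2} \cdots v_{i_r}$ in the generators and inducting on $r$ gives $\alpha\, \mu_{kl}\, \alpha^{-1} = \mu_{\alpha(k)\,\alpha(l)}$, because each conjugation by $v_{i_s}$ permutes the index pair by the transposition $(i_s, i_s{+}1)$, and composing these permutations yields exactly $\alpha$ applied to the pair.

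Next I would verify the one-step rule by going through the cases on how $\{i, i{+}1\}$ meets $\{k, l\}$. The transposition $v_i = (i, i{+}1)$ fixes every index outside $\{i, i{+}1\}$, so there are only finitely many configurations to check:
\begin{itemize}
\item[(a)] both $k$ and $l$ lie outside $\{i, i{+}1\}$, i.e.\ $|k-i|>1$ and $|l-i|>1$: here $v_i(k)=k$, $v_i(l)=l$, and the identity $v_i \mu_{kl} v_i = \mu_{kl}$ is exactly Lemma~\ref{lemma:action}(i);
\item[(b)] exactly one of $k, l$ equals $i$ or $i{+}1$: this is handled by Lemma~\ref{lemma:action}(ii) and (iv), which cover $v_{i-1}$ acting on a string with index $i$ and $v_{i+1}$ acting on a string with index $i$, in both orderings $\mu_{i,i+1}$ and $\mu_{i+1,i}$;
\item[(c)] $\{k, l\} = \{i, i{+}1\}$: then $v_i$ swaps the two indices, which is Lemma~\ref{lemma:action}(iii).
\end{itemize}
In every case the claimed permutation of indices matches $(i,i{+}1)$ acting on the pair $(k,l)$, so $v_i \mu_{kl} v_i = \mu_{v_i(k)\,v_i(l)}$ holds. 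The analogous identities for $\gamma_{kl}$ hold by the same case analysis, using the $\gamma$-halves of Lemma~\ref{lemma:action}. The statements for $\mu_{kl}^{-1}$ and $\bgamma_{kl}$ follow by taking inverses of these identities (using $\bgamma_{ij} = \gamma_{ij}^{-1}$), as already noted in the remark after Lemma~\ref{lemma:action}.

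Finally, transitivity of the action on $\{\mu_{ij}^{\pm 1}, \gamma_{ij}, \bgamma_{ij}\}$: since $S_n$ acts transitively on ordered pairs of distinct elements of $\{1, \dots, n\}$ — given $(i,j)$ and $(k,l)$ with $i \ne j$, $k \ne l$, there is a permutation $\alpha$ with $\alpha(i)=k$, $\alpha(j)=l$ — the formula $\alpha\, \mu_{ij}\, \alpha^{-1} = \mu_{kl}$ shows any $\mu$-type generator is carried to any other; similarly for the $\gamma$-type, and these two orbits are the full set. The only genuinely delicate point is the bookkeeping in case (b) above — making sure that the index appearing in, say, $\mu_{i-1, i+1}$ is indeed the image of $i$ under $(i,i{+}1)$ applied to the appropriate pair, and that the two possible orderings of the indices are both covered — but Lemma~\ref{lemma:action} as stated lists all four needed subcases for each of $v_{i-1}$ and $v_{i+1}$, so no new computation is required. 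I would close by remarking that the reduction-to-generators argument is the heart of the matter and everything else is a direct appeal to Lemma~\ref{lemma:action}.
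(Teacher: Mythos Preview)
Your approach is exactly the paper's: the corollary is stated there as following ``at once'' from Lemma~\ref{lemma:action}, and you have simply spelled out the reduction to the generating transpositions $v_i$ and the case analysis that the paper leaves implicit.

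One small bookkeeping issue is worth tightening. Lemma~\ref{lemma:action}, as literally stated, only records conjugation rules for the \emph{elementary} fusing strings $\mu_{i,i+1}$, $\mu_{i+1,i}$ (parts (ii)--(iv)) together with the far-commuting case (part (i), under the hypothesis $|k-i|>1$ and $|l-i|>1$). Your case (b) --- ``exactly one of $k,l$ lies in $\{i,i{+}1\}$'' --- is not in general of that form: e.g.\ $v_i\,\mu_{i,\,i+3}\,v_i$ falls under your case (b) but is not directly one of the identities in Lemma~\ref{lemma:action}. Similarly, your equivalence in case (a) is slightly off: $k=i-1$ lies outside $\{i,i{+}1\}$ yet $|k-i|=1$, so such pairs are not covered by part (i) either. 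These gaps are easy to close --- write the general $\mu_{kl}$ as a conjugate of an elementary fusing string via Definition~\ref{Gammaij}, push $v_i$ through using the $S_n$ relations (\ref{eqn: G'n 1})--(\ref{eqn: G'n 2}) and the commuting relations, and then apply Lemma~\ref{lemma:action} to the resulting elementary string --- but you should say so rather than asserting that Lemma~\ref{lemma:action}(ii),(iv) handle case (b) directly. The paper's proof, being a one-liner, has the same elision; your more explicit write-up should fill it.
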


As seen in Theorem~\ref{VSGn Gamma Presentation}, it is sufficient to work with the elementary fusing strings (together with the virtual generators) to generate the group $VSG_n$. This is not the case for $VSPG_n$, for which we need the generalized fusing strings.

\begin{lemma} \label{generatorsVSPGn}
The subgroup $VSPG_n$ of $VSG_n$ is generated by the elements  $\mu_{ij}, \gamma_{ij}$ for all $1 \le i \ne j \le n$.
\end{lemma}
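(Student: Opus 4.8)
The plan is to use the Reidemeister--Schreier method with the presentation of $VSG_n$ given in Theorem~\ref{VSGn Gamma Presentation}, together with the short exact sequence $1 \to VSPG_n \to VSG_n \xrightarrow{\pi} S_n \to 1$. First I would fix a Schreier transversal for $VSPG_n$ in $VSG_n$: since $VSG_n$ is generated (as a group) by $\{\mu_{i,i+1},\gamma_{i,i+1},v_i\}$ and $\pi$ kills $\mu_{i,i+1}$ and $\gamma_{i,i+1}$ while sending $v_i$ to the Coxeter generator $(i,i+1)$, a coset of $VSPG_n$ is determined by the underlying permutation, and one may choose the standard Schreier set of reduced words $\Lambda = \{v_{i_1}v_{i_2}\cdots\}$ in the $v_i$'s representing the elements of $S_n$ (one word per permutation, closed under taking prefixes).

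Next I would apply the Reidemeister--Schreier rewriting process: a generating set for $VSPG_n$ is given by the nontrivial elements of the form $\lambda\, x\, \overline{\lambda x}^{\,-1}$, where $\lambda$ ranges over $\Lambda$ and $x$ over the generators $\{\mu_{i,i+1},\gamma_{i,i+1},v_i\}$, and $\overline{w}$ denotes the transversal representative of the coset of $w$. The contributions from $x = v_i$ are trivial because $\Lambda$ consists of $v$-words and is Schreier, so $\lambda v_i \overline{\lambda v_i}^{-1} \in \{1\}$ for all such generators (this is exactly the standard fact that the $v$-part of the transversal absorbs the $v_i$ letters). For $x = \mu_{i,i+1}$, the generator $\lambda\,\mu_{i,i+1}\,\overline{\lambda\mu_{i,i+1}}^{\,-1} = \lambda\,\mu_{i,i+1}\,\lambda^{-1}$ (since $\pi(\mu_{i,i+1})=1$ forces $\overline{\lambda\mu_{i,i+1}} = \lambda$), and by Corollary~\ref{Sn Action-VS} this equals $\mu_{\alpha(i),\alpha(i+1)}$ where $\alpha = \pi(\lambda)$. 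As $\lambda$ ranges over $\Lambda$, $\alpha$ ranges over all of $S_n$ (since $\Lambda$ is a transversal), so by transitivity of the $S_n$-action the set $\{\lambda\,\mu_{i,i+1}\,\lambda^{-1} : \lambda\in\Lambda,\ 1\le i\le n-1\}$ is precisely $\{\mu_{kl} : 1 \le k \ne l \le n\}$. The same argument with $x = \gamma_{i,i+1}$ produces exactly $\{\gamma_{kl} : 1\le k\ne l\le n\}$. Hence $VSPG_n$ is generated by $\{\mu_{ij},\gamma_{ij} : 1\le i\ne j\le n\}$, as claimed (the inverses $\mu_{ij}^{-1}$ and $\bgamma_{ij}$ are of course redundant as group generators).

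The step I expect to require the most care is the bookkeeping that the chosen transversal $\Lambda$ really is Schreier and that the $v_i$-generators contribute nothing: one must check that for $\lambda\in\Lambda$ and any $i$, the reduced word for $\pi(\lambda)\cdot(i,i+1)$ obtained from $\lambda v_i$ via the defining relations \eqref{eqn: G'n 1}--\eqref{eqn: G'n 2} of the $v_i$'s is again the transversal representative, so that $\lambda v_i \overline{\lambda v_i}^{-1}$ lies in the normal closure of the empty set of $VSPG_n$-generators. Once this is in place, the surjectivity direction of the lemma is immediate from Corollary~\ref{Sn Action-VS}, and the reverse inclusion (that each $\mu_{ij},\gamma_{ij}$ lies in $VSPG_n$) is already recorded in Definition~\ref{Gammaij}, where these generalized fusing strings are defined as elements of $VSPG_n$.
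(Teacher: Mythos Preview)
Your argument is correct, but it takes a heavier route than the paper does for this particular lemma. The paper's proof is a direct rewriting: starting from the original generators, it substitutes $\sigma_i = \mu_{i,i+1} v_i$, $\sigma_i^{-1} = v_i\mu_{i,i+1}^{-1}$, $\tau_i = \gamma_{i,i+1}v_i$, $\btau_i = v_i\bgamma_{i,i+1}$, and then uses Corollary~\ref{Sn Action-VS} in the form $v_k\,\mu_{ij} = \mu_{v_k(i)\,v_k(j)}\,v_k$ (and similarly for $\gamma_{ij}$, $\mu_{ij}^{-1}$, $\bgamma_{ij}$) to push every $v$-letter to the right. Any $b\in VSG_n$ thus factors as $b=ac$ with $a$ a word in $\{\mu_{ij}^{\pm1},\gamma_{ij}^{\pm1}\}$ and $c$ a word in the $v_k$; when $b\in VSPG_n$ one has $\pi(c)=1$, hence $c=1_n$ and $b=a$. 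No transversal or Schreier condition is invoked.

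What you have written is essentially the generator-finding portion of the paper's proof of Theorem~\ref{VSPGn Gamma Presentation}, where the full Reidemeister--Schreier machinery is deployed later to obtain the \emph{relations} as well (and where the paper explicitly remarks that the generating set is recovered along the way). One difference: you work from the presentation of Theorem~\ref{VSGn Gamma Presentation} with generators $\mu_{i,i+1},\gamma_{i,i+1},v_i$, whereas the paper uses the original $\sigma_i,\tau_i,v_i$ presentation in its Reidemeister--Schreier computation. Your choice makes $\overline{\lambda x}=\lambda$ immediate for $x\in\{\mu_{i,i+1},\gamma_{i,i+1}\}$, at the cost of having to justify $s_{\lambda,v_i}=1_n$. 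That justification, as you note, amounts to the fact that $\langle v_1,\dots,v_{n-1}\rangle\cong S_n$ inside $VSG_n$; both approaches ultimately rely on this (the paper's direct proof needs it too, to pass from $\pi(c)=1$ to $c=1_n$), and it follows since $\pi$ restricted to $\langle v_i\rangle$ inverts the obvious surjection $S_n\to\langle v_i\rangle$.
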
 

\begin{proof} Let $b \in VSG_n$. Then $b$ is written in terms of elementary virtual singular braids $\sigma_i^{\pm 1}, \tau_i$ and $\btau_i$.
By~\eqref{FusingStrings}, $\sigma_i = \mu_{i,i+1} v_i, \,\, \sigma_i^{-1} = v_i \mu_{i, i+1}^{-1}, \,\, \tau_i = \gamma_{i,i+1} v_i$ and $\btau_i =  v_i \bgamma_{i, i+1}$. By Corollary~\ref{Sn Action-VS}, 
\[\alpha \mu_{ij}^{\pm 1} = \mu_{\alpha(i) \alpha(j)} ^{\pm 1}\alpha \,\, , \,\,  \alpha \,  \gamma_{ij} = \gamma_{\alpha(i) \alpha(j)} \alpha \,\, \text{ and }  \alpha \,  \bgamma_{ij} = \bgamma_{\alpha(i) \alpha(j)} \alpha,  \]
for every $\alpha \in \{v_k \, | \, 1 \leq k \leq n-1 \}$.
Hence $\mu_{ij}^{\pm 1}, \gamma_{ij}$ and $\bgamma_{ij}$ can move on top of the braid (equivalently, in front of the word representing the braid). Therefore, $b = a c$, where $a$ is a word over the alphabet $\{\mu_{ij}, \mu_{ij}^{-1}, \gamma_{ij}, \bgamma_{ij},  | \, 1 \le i \ne j \le n\}$ and $c$ is a word over the alphabet $\{ v_k \, | \, 1 \leq k \leq n-1 \}$. Thus $a \in VSPG_n$ and $c \in S_n$. It follows that $VSG_n = (VSPG_n) \cdot S_n$.

If $b \in VSPG_n$, then $1_n = \pi(b) =  \pi(a) \pi(c) = 1_n \pi(c) = \pi(c)$. Hence, $c$ must be the identity permutation in $S_n$ and $b = a$. The statement follows.
 \end{proof}
 
\begin{corollary}
$VSG_n \cong VSPG_n  \rtimes_{\phi} S_n$, where $\phi \co S_n \to \text{Aut}(VSPG_n)$ is the permutation representation associated with the action of $S_n$ on $VSPG_n$, as described in Corollary~\ref{Sn Action-VS}.
\end{corollary}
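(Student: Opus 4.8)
The plan is to realize $VSG_n$ as an internal semidirect product and then invoke the standard recognition theorem: if a group $G$ has a normal subgroup $N$ and a subgroup $H$ with $NH = G$ and $N \cap H = \{1\}$, then $G \cong N \rtimes H$ with $H$ acting on $N$ by conjugation. Here I take $N = VSPG_n$, which is normal in $VSG_n$ since it equals $\ker\pi$, and $H \le VSG_n$ the subgroup generated by $\{v_i \mid 1 \le i \le n-1\}$. It then remains to check that $H$ is a faithful copy of $S_n$, that $N$ and $H$ together generate $VSG_n$, that they intersect trivially, and finally that the resulting conjugation action agrees with $\phi$.

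First I would note that the generators $v_i$ of $H$ satisfy exactly the Coxeter relations of $S_n$ among themselves: $v_i^2 = 1_n$ by \eqref{eqn: G'n 1}, $v_iv_jv_i = v_jv_iv_j$ for $|i-j| = 1$ by \eqref{eqn: G'n 2}, and $v_iv_j = v_jv_i$ for $|i-j| > 1$ as a special case of \eqref{eqn: G'n 8}. Hence there is a well-defined group homomorphism $s \colon S_n \to VSG_n$ determined by $s((i,i+1)) = v_i$, with image $H$. Since $\pi(v_i) = (i,i+1)$ for every $i$, we get $\pi \circ s = \mathrm{id}_{S_n}$; therefore $s$ is injective and restricts to an isomorphism of $S_n$ onto $H$ whose inverse is $\pi|_H$. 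In particular $\pi|_H$ is injective.

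Next, the identity $NH = VSG_n$ is precisely what the proof of Lemma~\ref{generatorsVSPGn} establishes: every $b \in VSG_n$ factors as $b = ac$ with $a$ a word in the generalized fusing strings (so $a \in VSPG_n = N$) and $c$ a word in the $v_k$ (so $c \in H$). For the trivial intersection, suppose $x \in N \cap H$; writing $x = s(\tau)$ for some $\tau \in S_n$, we obtain $\tau = \pi(s(\tau)) = \pi(x) = 1$ because $x \in \ker\pi$, so $x = s(1) = 1_n$. The recognition theorem now yields $VSG_n \cong N \rtimes_\psi H$ where $\psi \colon H \to \mathrm{Aut}(N)$ is conjugation; transporting along the isomorphism $s$ gives $VSG_n \cong VSPG_n \rtimes_{\psi \circ s} S_n$, with explicit isomorphism $(a,\tau) \mapsto a\, s(\tau)$. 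Finally, for a transposition $\tau = (i,i+1)$, conjugation by $s(\tau) = v_i$ permutes the indices of the generalized fusing strings according to $\tau$ by Lemma~\ref{lemma:action} and Corollary~\ref{Sn Action-VS}; since the transpositions $(i,i+1)$ generate $S_n$, this identifies $\psi \circ s$ with the permutation representation $\phi$ of Corollary~\ref{Sn Action-VS}, which completes the proof.

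The only place where I expect any subtlety is confirming that $\pi|_H$ is injective, that is, that $H$ is not a proper quotient of $S_n$; this is exactly what the section $s$ provides, using that the relations of $VSG_n$ involving only the generators $v_i$ are precisely the Coxeter relations of $S_n$. Once $s$ is in hand, the remaining verifications — $NH = VSG_n$, $N \cap H = \{1_n\}$, and the identification of the action with $\phi$ — are immediate consequences of Lemma~\ref{generatorsVSPGn}, Definition~\ref{Virtual Singular Pure Braid Group}, and Corollary~\ref{Sn Action-VS}, respectively.
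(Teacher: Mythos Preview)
Your proposal is correct and follows essentially the same approach as the paper: the paper also invokes the internal semidirect product recognition theorem using $VSPG_n \trianglelefteq VSG_n$, the factorization $VSG_n = (VSPG_n)\cdot S_n$ from the proof of Lemma~\ref{generatorsVSPGn}, and $VSPG_n \cap S_n = \{1_n\}$, and it likewise mentions the splitting $s\colon S_n \to VSG_n$, $(i,i+1)\mapsto v_i$, of the short exact sequence. Your write-up is in fact more careful than the paper's on two points the paper leaves implicit: verifying that the subgroup $H=\langle v_i\rangle$ is genuinely isomorphic to $S_n$ (via $\pi\circ s=\mathrm{id}$), and checking that the conjugation action of $H$ on $VSPG_n$ coincides with the permutation action $\phi$ of Corollary~\ref{Sn Action-VS}.
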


\begin{proof}
 We know that $VSPG_n$ is a normal subgroup of $VSG_n$. From the proof of Lemma~\ref{generatorsVSPGn}, we have that $VSG_n = (VSPG_n) \cdot S_n$ and $VSPG_n \cap S_n = \{1_n\}$. The result follows from a well-known theorem  that recognizes groups as (internal) semi-direct products.
 
 We can also use short exact sequences to prove this. Note that 
 \[ 1 \longrightarrow VSPG_n \stackrel{i}{\longrightarrow} VSG_n \stackrel{\pi}{\longrightarrow} S_n \longrightarrow 1  \]
 is a split short exact sequence with splitting (section) $s \co S_n \to VSG_n$ that sends the transpositions $(i, i+1)$ to $v_i$, for all $1 \leq i \leq n-1$. Hence, $VSG_n \cong VSPG_n  \rtimes S_n$. 
 \end{proof}


\section{Generators and relations of the virtual singular pure braid group} \label{presentation VSPGn}

In this section, we provide a presentation via generators and relations for the virtual singular pure braid group, $VSPG_n$. For this purpose, we use the Reidemeister-Schreier method (see~\cite[Chapter 2.2]{MKS}). We note that Bardakov~\cite[Theorem 1]{Bardakov} used this method to provide a presentation for the virtual pure braid group, $VP_n$. Similarly, Bardakov and Bellingeri~\cite[Proposition 17]{BB} made use of this method to find a presentation for the normal closure of the braid group $B_n$ in the virtual braid group $VB_n$. Moreover, a similar proof was provided by the first author and Zepeda in~\cite[Theorem 17]{Caprau}, to construct a presentation for the virtual singular pure braid monoid $VSP_n$.

\begin{theorem} \label{VSPGn Gamma Presentation} 
The virtual singular pure braid group $VSPG_n$ is generated by the generalized fusing strings $\{\mu_{ij}, \gamma_{ij} \ |\ 1 \le i \ne j \le n \}$, subject to the following relations:
\begin{eqnarray}
\mu_{ij}\mu_{ik}\mu_{jk} &=&\mu_{jk}\mu_{ik}\mu_{ij} \label{eq:pureYB}\\
\mu_{ij}\mu_{ik}\gamma_{jk} &=&\gamma_{jk}\mu_{ik}\mu_{ij} \label{eq:pureYB-mixt}\\
 \gamma_{ij}\mu_{ik}\mu_{jk} &=&\mu_{jk}\mu_{ik}\gamma_{ij}  \label{eq:pureYB-mixt-two} \\
\mu_{ij}\gamma_{ji} &=& \gamma_{ij}\mu_{ji} \label{eq:pureTwist}  \\
\mu_{ij} \mu_{kl} = \mu_{kl} \mu_{ij} \, ,\,\,\,  \,\,\,\, \gamma_{ij} \gamma_{kl} &=& \gamma_{kl} \gamma_{ij}  \, ,\,\,\, \,\,\,\, \mu_{ij} \gamma_{kl} = \gamma_{kl} \mu_{ij} \,, \label{eq:purecommuting}
\end{eqnarray}
where distinct letters stand for distinct indices.
\end{theorem}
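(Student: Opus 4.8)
The plan is to apply the Reidemeister--Schreier method to the presentation of $VSG_n$ from Theorem~\ref{VSGn Gamma Presentation}, using the index-$n!$ normal subgroup $VSPG_n=\ker\pi$. First I would fix a Schreier (prefix-closed) transversal $\Lambda=\{\lambda_\rho\mid\rho\in S_n\}$ for $VSPG_n$ in $VSG_n$, taking each $\lambda_\rho$ to be a reduced word in $v_1,\dots,v_{n-1}$ representing $\rho$, so that $\Lambda$ sits inside the complement $\langle v_1,\dots,v_{n-1}\rangle\cong S_n$. Writing $\overline w$ for the transversal representative of the coset $VSPG_n\,w$, the two facts I would use repeatedly are: since $\mu_{i,i+1},\gamma_{i,i+1}$ are pure while $\pi(v_i)=(i,i+1)$, one has $\overline{\lambda\,\mu_{i,i+1}}=\overline{\lambda\,\gamma_{i,i+1}}=\lambda$, and $\overline{\lambda\,v_i}=\lambda v_i$ as elements of $VSG_n$ (the latter because $\pi$ restricted to $\langle v_1,\dots,v_{n-1}\rangle$ is injective).

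Consequently the Schreier generators $s_{\lambda,x}:=\lambda\,x\,(\overline{\lambda x})^{-1}$ simplify: $s_{\lambda,v_i}=\lambda v_i(\lambda v_i)^{-1}=1$ and is discarded, while by Corollary~\ref{Sn Action-VS}, with $\rho=\pi(\lambda)$, we get $s_{\lambda,\mu_{i,i+1}}=\lambda\,\mu_{i,i+1}\,\lambda^{-1}=\mu_{\rho(i)\rho(i+1)}$ and $s_{\lambda,\gamma_{i,i+1}}=\gamma_{\rho(i)\rho(i+1)}$. As $(\lambda,i)$ varies these exhaust $\{\mu_{kl},\gamma_{kl}\mid 1\le k\ne l\le n\}$ by transitivity of the $S_n$-action (recovering Lemma~\ref{generatorsVSPGn}); since several of the $s_{\lambda,\cdot}$ can name the same generalized fusing string, I would keep one symbol $\mu_{kl}$, resp.\ $\gamma_{kl}$, per string and eliminate the rest by Tietze moves once the identifying relations below are available.

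The main computation is to rewrite, for all $\lambda\in\Lambda$, each conjugate $\lambda r\lambda^{-1}$ of the defining relators $r$ of $VSG_n$ listed as \eqref{eqn: G'n 1}--\eqref{eqn: G'n 8}: read the word left to right, update a running coset at every $v$-letter, and emit $s_{\cdot,\cdot}^{\pm1}$ at every $\mu$- or $\gamma$-letter. Relators \eqref{eqn: G'n 1}--\eqref{eqn: G'n 2} are words in the $v_i$, hence rewrite to the empty word; relators \eqref{eqn: G'n 3}--\eqref{eqn: G'n 4} and the cases of \eqref{eqn: G'n 8} containing a $v_j$ rewrite to the empty word or to an identification of two Schreier symbols that name the same generalized fusing string, which is exactly what the Tietze reduction consumes. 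After substituting the canonical names, the surviving relators give: \eqref{eqn: G'n 5} gives \eqref{eq:pureYB}; \eqref{eqn: G'n 6} gives \eqref{eq:pureYB-mixt-two} when $j=i+1$ and \eqref{eq:pureYB-mixt} when $j=i-1$; \eqref{eqn: G'n 7} gives \eqref{eq:pureTwist}; and the $\mu/\gamma$-cases of \eqref{eqn: G'n 8} give \eqref{eq:purecommuting}. For instance, rewriting $\lambda r\lambda^{-1}$ with $r$ equal to \eqref{eqn: G'n 5} and $j=i+1$, and setting $a=\rho(i)$, $b=\rho(i+1)$, $c=\rho(i+2)$ for $\rho=\pi(\lambda)$, the output is $\mu_{bc}\mu_{ac}\mu_{ab}\,(\mu_{ab}\mu_{ac}\mu_{bc})^{-1}$, which is precisely \eqref{eq:pureYB} for the triple $(a,b,c)$; as $\lambda$ and $i$ vary, $(\rho(i),\rho(i+1),\rho(i+2))$ runs over all ordered triples of distinct indices, so every instance of \eqref{eq:pureYB} is produced, and the condition $|i-j|>1$ in \eqref{eqn: G'n 8} likewise forces the four indices in \eqref{eq:purecommuting} to be distinct. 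Collecting all of this yields the stated presentation of $VSPG_n$.

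The step I expect to be the main obstacle is precisely this bookkeeping: carrying the rewriting through all eight relator families and all $n!$ transversal elements, and in particular verifying that the identification relators coming from \eqref{eqn: G'n 3}--\eqref{eqn: G'n 4} and the $v$-cases of \eqref{eqn: G'n 8} collapse the Schreier generating set down to exactly $\{\mu_{ij},\gamma_{ij}\}$ with no stray symbols surviving, and that the remaining relators reproduce \eqref{eq:pureYB}--\eqref{eq:purecommuting} with no omissions and no extras under the convention that distinct letters denote distinct indices. This is routine but lengthy, and it closely parallels the Reidemeister--Schreier computation carried out for the monoid $VSP_n$ in~\cite[Theorem 17]{Caprau}, which one can follow almost verbatim for the parts of the relators not involving $\btau_i$ (equivalently, not involving $\gamma_{i,i+1}^{-1}$).
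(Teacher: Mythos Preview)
Your proposal is correct and follows the same Reidemeister--Schreier strategy as the paper, with one minor but noteworthy twist: you feed in the alternative presentation of $VSG_n$ from Theorem~\ref{VSGn Gamma Presentation} (generators $\mu_{i,i+1},\gamma_{i,i+1},v_i$), whereas the paper explicitly reverts to the standard presentation with $\sigma_i,\tau_i,v_i$ for this computation. Both choices work and yield the same Schreier generators, since $s_{\lambda,\sigma_i}=\lambda\sigma_i(\lambda v_i)^{-1}=\lambda\mu_{i,i+1}\lambda^{-1}$ coincides with your $s_{\lambda,\mu_{i,i+1}}$. Your choice makes the coset bookkeeping slightly cleaner (the running coset only changes at $v$-letters), at the cost of having to argue that relators \eqref{eqn: G'n 3}--\eqref{eqn: G'n 4} and the $v$-cases of \eqref{eqn: G'n 8} collapse to identifications; the paper's choice makes the rewriting of each relator a bit longer but the relator list is the more familiar one, and the paper also notes that the argument parallels \cite[Theorem~17]{Caprau}.
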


\begin{proof}
The proof and notation is similar to that for the virtual singular pure braid monoid in~\cite[Theorem 17]{Caprau}. 

By Lemma~ \ref{generatorsVSPGn}, we know that $VSPG_n$ is generated by $\{\mu_{ij}, \gamma_{ij} \ |\ 1 \le i \ne j \le n \}$. The Reidemeister-Schreier method will allow us to find the corresponding relations for this group, and along the steps, we will verify again that $\mu_{ij}, \gamma_{ij}$, for all $1 \le i \ne j \le n$, generate the normal subgroup $VSPG_n$.
We start by considering a Schreier system of right coset representatives of the subgroup $VSPG_n$ in $VSG_n$:
\begin{eqnarray*}
\Lambda_n : &=& \{ (v_{i_1}v_{i_1 -1} \cdots v_{i_1 - r_1} ) (v_{i_2}v_{i_2 -1} \cdots v_{i_2 - r_2} ) \dots (v_{i_p}v_{i_p -1} \cdots v_{i_p - r_p} ) |  \\
&& \hspace{3cm} 1 \leq i_1 < i_2 < \dots < i_p \leq n-1; 0 \leq r_j < i_j      \}.
\end{eqnarray*}
A Schreier right coset representative has the property that any initial segment of a representative is also a coset representative. Using~\cite[Chapter 2.2]{MKS}, we define the map $\bar{\empty} : VSG_n \to \Lambda_n$ which takes an extended virtual singular braid $\omega$ in $VSG_n$ to a coset representative $\overline{\omega}$ in $\Lambda_n$. Then $\omega \in (VSPG_n) \, \overline{\omega}$ and so $\omega = \alpha \, \overline{\omega}$, for some $\alpha \in VSPG_n$. Hence $\omega \, \overline{\omega}^{-1} \in VSPG_n$.

By~\cite[Theorem 2.7]{MKS}, the subgroup $VSPG_n$ of $VSG_n$  is generated by all elements of the form
$s_{\lambda, a} = \lambda a \, (\overline{\lambda a})^{-1},$ where $\lambda \in \Lambda_n$ and $a$ is a generator for the $VSG_n$. Recall that the generators for $VSG_n$ are $v_i, \sigma_i$, and $\tau_i$, for all $1 \leq i \leq n-1$.
We note that $\overline{\lambda v_i} = \lambda v_i, \, \overline{\lambda \sigma_i}  = \lambda v_i$ and $\overline{\lambda \tau_i} = \lambda v_i $, for all $\lambda \in \Lambda_n$ and $1 \leq i \leq n-1$. 
It follows that for all $\lambda \in \Lambda_n$ and $1 \leq i \leq n-1$, we have:
 \begin{eqnarray*}
 s_{\lambda, v_i} &=& 1_n, \\
  s_{\lambda, \sigma_i} & =& \lambda (\sigma_i v_i)     \lambda^{-1} = \lambda \mu_{i, i+1} \lambda^{-1}, \\
  s_{\lambda, \tau_i} &=& \lambda (\tau_i v_i) \lambda^{-1} = \lambda \gamma_{i, i+1} \lambda^{-1}.
  \end{eqnarray*}

Since the elements in $\Lambda_n$ are identified with elements of $S_n$, and since by Corollary~\ref{Sn Action-VS}, $S_n$ acts on the elementary fusing strings $\mu_{i, i+1}$ and $\gamma_{i, i+1}$ by permuting the indices, the above calculations imply that 
each generator $s_{\lambda, \sigma_i}$ is equal to some $\mu_{kl}$ and that each $s_{\lambda, \tau_i}$ is equal to some $\gamma_{kl}$. Conversely, $\mu_{kl}$ and $\gamma_{kl}$ are equal to some $s_{\lambda, \sigma_i}$ and $s_{\lambda, \tau_i}$, respectively. It follows that $VSPG_n$ is generated by $\{\mu_{ij}, \gamma_{ij} \ |\ 1 \le i \ne j \le n \}$.

We continue with the Reidemeister-Schreier method to find the defining relations for the group $VSPG_n$ and thus prove the second part of the theorem. Following~\cite[Theorem 2.7]{MKS}, we define a rewriting process $\mathcal{R}$ which converts an element in $VSPG_n$ written in terms of the generators of the parent group $VSG_n$ into a word written in terms of the generators of the subgroup $VSPG_n$. 

If $ \omega = a_1^{\epsilon_1} a_2^{\epsilon_2} \cdots a_t^{\epsilon_t} \in VSPG_n$, where $a_j \in \{\sigma_j,  \tau_j, v_j  \, | \, 1 \leq j \leq n-1 \},$ and $\epsilon_j = \pm 1$ (recall that $\tau_j^{-1} = \overline{\tau}_j$)
 then the rewritten word $\mathcal{R}(\omega)$ is given by
 \[ \mathcal{R}(\omega) = s_{k_1, a_1}^{\epsilon_1} s_{k_2, a_2}^{\epsilon_2} \cdots s_{k_t, a_t}^{\epsilon_t},  \]
 where for each $1 \leq j \leq t$, $k_j = \begin{cases} \overline{a_1 a_2 \cdots a_{j-1}} \,\, \,\, \text{if} \,\, \epsilon_j = 1,\\ \overline{a_1 a_2 \cdots a_{j-1} a_j} \,\, \,\, \text{if} \,\, \epsilon_j = -1. \end{cases}$

In this proof, it is convenient to work with the standard presentation for the virtual singular braid group $VSPG_n$; for convenience, we list the relations here again:

\begin{itemize}
\item two-point relations: $v_i^2 = 1_n$, $\sigma_i \tau_i = \tau_i \sigma_i$.
\item three-point relations:
\begin{eqnarray*}
\sigma_i\sigma_{i+1}\sigma_i=\sigma_{i+1}\sigma_i\sigma_{i+1} \hspace{1cm} v_i v_{i+1} v_i= v_{i+1} v_i v_{i+1},\\
 v_i \sigma_{i+1} v_i= v_{i+1} \sigma_i v_{i+1} \hspace{1.1cm} v_i \tau_{i+1} v_i= v_{i+1} \tau_i v_{i+1}, \\
 \sigma_i\sigma_{i+1}\tau_i=\tau_{i+1}\sigma_i\sigma_{i+1} \hspace{1cm} \sigma_{i+1}\sigma_i\tau_{i+1}=\tau_i\sigma_{i+1}\sigma_i.
 \end{eqnarray*}
\item commuting relations: $g_ih_j = h_jg_i $ for $|i-j| >1$, where $g_i, h_i \in \{ \sigma_i, \tau_i, v_i  \}$.
\end{itemize}

The relations for the subgroup $VSPG_n$ are given by $\mathcal{R}(\lambda r_u \lambda^{-1})$, for all $\lambda \in \Lambda_n$ and all defining relations $r_u$ of the parent group $VSG_n$. 

Starting with the relation $r_1: v_i^2 = 1_n$ and noting that $\mathcal{R}(1_n) = 1_n$ and $s_{\lambda, v_i} = 1_n$ for any $\lambda \in \Lambda_n$, it is clear that the rewriting process $\mathcal{R}$ applied to the relation $r_1$ produces the trivial relation $1_n = 1_n$.
By applying the rewriting process to $r_2: \sigma_i \tau_i = \tau_i \sigma_i$, we get the following:
\begin{eqnarray*}
\mathcal{R}(\sigma_i \tau_i ) &=& s_{1_n, \sigma_i} \, s_{\overline{\sigma_i}, \tau_i}  = s_{1_n, \sigma_i} \, s_{v_i, \tau_i} = (\mu_{i,i+1}) (v_i \gamma_{i, i+1}v_i) = \mu_{i,i+1} \, \gamma_{i+1, i}\\
\mathcal{R}(\tau_i \sigma_i  ) &=& s_{1_n, \tau_i} \, s_{\overline{\tau_i}, \sigma_i}  = s_{1_n, \tau_i} \, s_{v_i, \sigma_i} = (\gamma_{i,i+1}) (v_i \mu_{i, i+1}v_i) = \gamma_{i,i+1} \, \mu_{i+1, i}.
\end{eqnarray*}
Therefore, $\mathcal{R}(r_2)$ has the form $\mu_{i,i+1} \, \gamma_{i+1, i} = \gamma_{i,i+1} \, \mu_{i+1, i}$.  Conjugating by all representatives in $\Lambda_n$, it follows that relations $\mathcal{R}(\lambda r_2 \lambda^{-1})$ have the form $\mu_{kl} \gamma_{lk} = \gamma_{kl} \mu_{lk}$, where $k \not = l$.

The rewriting process applied to relation $r_3: \sigma_i \sigma_{i+1} \sigma_i = \sigma_{i+1} \sigma_i \sigma_{i+1}$ yields:
\[
\mathcal{R} (\sigma_i \sigma_{i+1} \sigma_i ) = s_{1_n, \sigma_i} \, s_{\overline{\sigma_i}, \sigma_{i+1}} \, s_{\overline{\sigma_i \sigma_{i+1} }, \sigma_i} = s_{1_n, \sigma_i} \, s_{v_i, \sigma_{i+1}} \, s_{v_i v_{i+1, \sigma_i}}.
\]
Since, 
\begin{eqnarray*}
s_{1_n, \sigma_i} &=& \mu_{i,i+1}\\
s_{v_i, \sigma_{i+1}} &=& v_i \mu_{i+1, i+2} v_i = \mu_{i, i+2}\\
s_{v_i v_{i+1, \sigma_i}} &=& v_i v_{i+1}\mu_{i,i+1} v_{i+1}v_i = \mu_{i+1, i+2}, 
\end{eqnarray*}
we obtain that $\mathcal{R} (\sigma_i \sigma_{i+1} \sigma_i ) = \mu_{i, i+1} \, \mu_{i, i+2} \, \mu_{i+1, i+2}$.
On the other hand,
\begin{eqnarray*}
\mathcal{R} (\sigma_{i+1} \sigma_i \sigma_{i+1} ) &=& s_{1_n, \sigma_{i+1}} \, s_{\overline{\sigma_{i+1}}, \sigma_i} \, s_{\overline{\sigma_{i+1} \sigma_i}, \sigma_{i+1} }
= s_{1_n, \sigma_{i+1}} \, s_{v_{i+1}, \sigma_i} s_{v_{i+1} v_i, \sigma_{i+1}}  \\
&=& \mu_{i+1, i+2} \, \mu_{i, i+2} \, \mu_{i, i+1}.
\end{eqnarray*}
We obtained that the rewriting process applied to the relation $r_3: \sigma_i \sigma_{i+1} \sigma_i = \sigma_{i+1} \sigma_i \sigma_{i+1}$ yields $\mu_{i, i+1} \, \mu_{i, i+2} \, \mu_{i+1, i+2} = \mu_{i+1, i+2} \, \mu_{i, i+2} \, \mu_{i, i+1}.$ Furthermore, conjugating this identity by all representatives in $\Lambda_n$ results in the relations $\mu_{ij}\mu_{ik}\mu_{jk} =\mu_{jk}\mu_{ik}\mu_{ij}$.

We consider now the relation $r_4: v_iv_{i+1} v_i = v_{i+1} v_i v_{i+1}$. Since for any $\lambda \in \Lambda_n$, $s_{\lambda, v_i} = 1_n$, it is easy to see that $\mathcal{R} (r_4)$ yields the trivial relation $1_n = 1_n$ in $VSPG_n$. Similarly, relations $r_5: v_i \sigma_{i+1} v_i = v_{i+1} \sigma_i v_{i+1}$ and $r_6: v_i \tau_{i+1} v_i = v_{i+1} \tau_i v_{i+1}$ induce the trivial relation. We show this for relation $r_5$ only, to avoid repetition.
\begin{eqnarray*}
\mathcal{R}(v_i \sigma_{i+1} v_i) &=& s_{1_n, v_i} \, s_{\overline{v_i}, \sigma_{i+1}} \, s_{\overline{v_i \sigma_{i+1}}, v_i} = 1_n  \,s_{v_i, \sigma_{i+1}} \, 1_n = v_i \mu_{i+1, i+2}v_i = \mu_{i, i+2}\\
\mathcal{R}(v_{i+1} \sigma_i v_{i+1}) &=& s_{1_n, v_{i+1}} \, s_{\overline{v_{i+1}}, \sigma_{i}} \, s_{\overline{v_{i+1} \sigma_{i}}, v_{i+1}} = 1_n  \,s_{v_{i+1}, \sigma_{i}} \, 1_n = v_{i+1} \mu_{i, i+1}v_{i+1} = \mu_{i, i+2}.
\end{eqnarray*}

Working with relation $r_7: \sigma_i \sigma_{i+1} \tau_i = \tau_{i+1} \sigma_i \sigma_{i+1}$, we obtain:
\begin{eqnarray*}
\mathcal{R}(\sigma_i \sigma_{i+1} \tau_i ) &=& s_{1_n, \sigma_i} \, s_{v_i, \sigma_{i+1}} \, s_{v_i v_{i+1, \tau_i}} = \mu_{i, i+1} \, \mu_{i, i+2} \, \gamma_{i+1, i+2}\\
\mathcal{R} (\tau_{i+1} \sigma_i \sigma_{i+1} ) &=&s_{1_n, \tau_{i+1}} \, s_{v_{i+1}, \sigma_i} s_{v_{i+1} v_i, \sigma_{i+1}} = \gamma_{i+1, i+2} \, \mu_{i, i+2} \, \mu_{i, i+1}.
\end{eqnarray*}
It follows that the rewriting process $\mathcal{R}$ applied to the relation $r_7$ in the group $VSG_n$ yields the generating relation $\mu_{i, i+1} \, \mu_{i, i+2} \, \gamma_{i+1, i+2} = \gamma_{i+1, i+2} \, \mu_{i, i+2} \, \mu_{i, i+1}$ for the normal subgroup $VSPG_n$. Conjugating the latter relation by all representatives $\lambda$ in $\Lambda_n$, it follows that the relations $\mathcal{R}(\lambda r_7 \lambda^{-1})$ have the form $\mu_{ij}\mu_{ik}\gamma_{jk} =\gamma_{jk}\mu_{ik}\mu_{ij}$.

We apply now the rewriting process $\mathcal{R}$ to the relation $r_8: \sigma_{i+1}\sigma_i \tau_{i+1} = \tau_i \sigma_{i+1}\sigma_i$.
\begin{eqnarray*}
\mathcal{R}(\sigma_{i+1}\sigma_i \tau_{i+1}) &=& s_{1_n, \sigma_{i+1}} \, s_{v_{i+1}, \sigma_i } \, s_{v_{i+1} v_i, \tau_{i+1}} = \mu_{i+1, i+2} \, \mu_{i, i+2} \, \gamma_{i, i+1}\\
\mathcal{R} (\tau_i \sigma_{i+1}\sigma_i) &=&s_{1_n, \tau_i} \, s_{v_i, \sigma_{i+1}} s_{v_i v_{i+1}, \sigma_i} =\gamma_{i, i+1} \, \mu_{i, i+2} \, \mu_{i+1, i+2} .
\end{eqnarray*}
Therefore, $\gamma_{i, i+1} \, \mu_{i, i+2} \, \mu_{i+1, i+2} = \mu_{i+1, i+2} \, \mu_{i, i+2} \, \gamma_{i, i+1}$ is a generating relation for the normal subgroup $VSPG_n$. Conjugating the latter relation by all representatives in $\Lambda_n$, we get relations $ \gamma_{ij}\mu_{ik}\mu_{jk} = \mu_{jk}\mu_{ik}\gamma_{ij}$.

We are left with showing that the rewriting process applied to the commuting relations of $VSG_n$ yield the commuting relations~\eqref{eq:purecommuting} in $VSPG_n$. We consider the relation $\tau_i \tau_j = \tau_j \tau_i$, where $|i-j| >1$, and apply the rewriting process to it to derive the relation $\gamma_{i, i+1} \gamma_{j, j+1} = \gamma_{j, j+1} \gamma_{i, i+1}$, as we show below.
\begin{eqnarray*}
\mathcal{R}(\tau_i \tau_j ) &=&s_{1_n, \tau_i} \, s_{v_i, \tau_j}  = (\gamma_{i, i+1}) (v_i \gamma_{j, j+1}v_i) = \gamma_{i,i+1} \, \gamma_{j, j+1}\\
\mathcal{R}( \tau_j \tau_i) &=&s_{1_n, \tau_j} \, s_{v_j, \tau_i}  = (\gamma_{j, j+1}) (v_j \gamma_{i, i+1}v_j) =  \gamma_{j, j+1} \, \gamma_{i,i+1}. 
\end{eqnarray*}
The last equalities in both lines above above hold because $|i-j| >1$, and thus $v_i \gamma_{j, j+1}v_i = \gamma_{j, j+1}$ and $v_j \gamma_{i, i+1}v_j =  \gamma_{i,i+1}$. Conjugating the relation $\gamma_{i, i+1} \gamma_{j, j+1} = \gamma_{j, j+1} \gamma_{i, i+1}$ by all representatives in $\Lambda_n$, we get:
\[ \gamma_{ij} \gamma_{kl} = \gamma_{kl} \gamma_{ij}, \,\, \text{where}\,\, \{i, j\} \cap \{ k, l\} = \emptyset. \]
Similarly, by applying the rewriting process to relations $\sigma_i \sigma_j = \sigma_j \sigma_i$ and $\sigma_i \tau_j = \tau_j \sigma_i$, where $|i-j| >1$, we obtain:
\[ \mu_{ij} \mu_{kl} = \mu_{kl} \mu_{ij} \,\, \text{and} \,\,\mu_{ij} \gamma_{kl} = \gamma_{kl} \mu_{ij}, \,\, \text{where}\,\, \{i, j\} \cap \{ k, l\} = \emptyset.  \] 
 The commuting relations $\sigma_i v_j = v_j \sigma_i$ and $\tau_i v_j = v_j \tau_i$ of the parent group $VSG_n$ result in the identity relation in $VSPG_n$.
 
 We have applied the rewriting process $\mathcal{R}$ to all of the defining relations for the group $VSG_n$ and we arrived at the relations~\eqref{eq:pureYB} through~\eqref{eq:purecommuting}. Hence, the second part of the theorem holds.
\end{proof}

We note that there are other similar relations as those in Theorem~\ref{VSPGn Gamma Presentation} that hold among the generalized fusing strings. For example, the second relations from Theorem~\ref{VSPGn Gamma Presentation} yield the relation $\mu_{ik} \gamma_{jk}\mu_{ij}^{-1} = \mu_{ij}^{-1} \gamma_{jk} \mu_{ik}$.
Also, the fourth relations $\mu_{ij}\gamma_{ji}=\gamma_{ij}\mu_{ji}$  imply that $\bgamma_{ij} \mu_{ij} = \mu_{ji} \bgamma_{ji}$ and $\gamma_{ji} \mu_{ji}^{-1} =  \mu_{ij}^{-1} \gamma_{ij}$, for all $1 \le i \ne j \le n$.


\section{The virtual singular pure braid group as a semi direct product} \label{semi-direct product}

The scope of this section is to represent the virtual singular pure braid group $VSPG_n$ as a semi-direct product of $n-1$ subgroups and study the structures of these subgroups. We will also show how these results yield normal form of words in $VSPG_n$ and $VSG_n$.

From here on, we will denote the conjugation $b^{-1}ab$ by $a^b$. Moreover, $a^{-b}=(a^{-1})^b$.

\begin{lemma} \label{VSPGn Conjugation}
The following equalities hold in the group $VSPG_n$:
\begin{enumerate}
\item[(i)] for distinct $i,j,k,l$, $\max\{i,j\}<\max\{k,l\}$, 
\[\alpha^{\beta^{\pm 1}}=\alpha, \text{ where } \alpha \in \{ \mu_{kl}, \gamma_{kl}\}, \beta \in \{ \mu_{ij},\gamma_{ij} \};\]


\item[(ii)] for $i<j<k$ or $j<i<k$,
\[\mu_{ik}^{\mu_{ij}}=\mu_{kj}^{\mu_{ij}}\mu_{ik}
\mu_{kj}^{-1}, \quad \mu_{ik}^{\mu_{ij}^{-1}}
=\mu_{kj}^{-1}\mu_{ik}\mu_{kj}^{\mu_{ij}^{-1}},\]


\[\mu_{ik}^{\gamma_{ij}}= \gamma_{jk}^{-\gamma_{ij}} \mu_{kj}^{-\gamma_{ij}} \gamma_{kj}^{\gamma_{ij}} \mu_{ik} \mu_{jk},             
 \quad \mu_{ik}^{\bgamma_{ij}} =  \mu_{jk} \mu_{ik}\gamma_{jk}^{-\bgamma_{ij}} \mu_{kj}^{-\bgamma_{ij}} \gamma_{kj}^{\bgamma_{ij}} ,\]

\[\gamma_{ik}^{\mu_{ij}}=\mu_{kj}^{\mu_{ij}}\gamma_{ik}
\mu_{kj}^{-1}, \quad \gamma_{ik}^{\mu_{ij}^{-1}}
=\mu_{kj}^{-1}\gamma_{ik}\mu_{kj}^{\mu_{ij}^{-1}};\]

\item[(iii)] for $i<j<k$ or $j<i<k$,
\[\mu_{ki}^{\mu_{ij}}=\mu_{kj}\mu_{ki}\mu_{kj}^{-\mu_{ij}}, \quad \mu_{ki}^{\mu_{ij}^{-1}}=\mu_{kj}^{-\mu_{ij}^{-1}}\mu_{ki}\mu_{kj},\]
\[\mu_{ki}^{\gamma_{ij}}=\mu_{kj}\mu_{ki}\mu_{kj}^{-\gamma_{ij}}, \quad \mu_{ki}^{\bgamma_{ij}}=\mu_{kj}^{-\bgamma_{ij}}\mu_{ki} \mu_{kj},\]
\[\gamma_{ki}^{\mu_{ij}}=\mu_{kj}\gamma_{ki}\mu_{kj}^{-\mu_{ij}}, \quad \gamma_{ki}^{\mu_{ij}^{-1}}=\mu_{kj}^{-\mu_{ij}^{-1}}\gamma_{ki}\mu_{kj};\]


\item[(iv)] for $i<j<k$ or $j<i<k$, \[\mu_{jk}^{\mu_{ij}}=\mu_{ik}\mu_{jk}\mu_{kj}
\mu_{ik}^{-1}\mu_{kj}^{-\mu_{ij}}, \quad \mu_{jk}^{\mu_{ij}^{-1}}=\mu_{kj}^{-\mu_{ij}^{-1}}\mu_{ik}^{-1}\mu_{kj}\mu_{jk}\mu_{ik},\]


\[\mu_{jk}^{\gamma_{ij}} = \gamma_{jk}^{\gamma_{ij}} \mu_{kj}^{\gamma_{ij}} \gamma_{kj}^{-\gamma_{ij}},
\quad \mu_{jk}^{\bgamma_{ij}} = \gamma_{jk}^{\bgamma_{ij}} \mu_{kj}^{\bgamma_{ij}} \gamma_{kj}^{-\bgamma_{ij}}, \]


\[\gamma_{jk}^{\mu_{ij}}=\mu_{ik}\gamma_{jk}\mu_{kj}
\mu_{ik}^{-1}\mu_{kj}^{-\mu_{ij}}, \quad \gamma_{jk}^{\mu_{ij}^{-1}}=\mu_{kj}^{-\mu_{ij}^{-1}}\mu_{ik}^{-1}\mu_{kj}\gamma_{jk}\mu_{ik};\]


\item[(v)] for $i<j<k$ or $j<i<k$,
\[\gamma_{kj}^{\mu_{ij}}=\mu_{kj}^{\mu_{ij}}\mu_{ik}\mu_{kj}^{-1}\gamma_{kj}\mu_{ik}^{-1} \ \text{ and } \ \gamma_{kj}^{\mu_{ij}^{-1}}=\mu_{ik}^{-1}\gamma_{kj}\mu_{kj}^{-1}\mu_{ik}\mu_{kj}^{\mu_{ij}^{-1}}. \]
\end{enumerate}
\end{lemma}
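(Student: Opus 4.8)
The strategy is the same for all five groups of identities in Lemma~\ref{VSPGn Conjugation}: every claimed equality is a consequence of the defining relations~\eqref{eq:pureYB}--\eqref{eq:purecommuting} of $VSPG_n$ from Theorem~\ref{VSPGn Gamma Presentation}, rewritten in the conjugation notation $a^b = b^{-1}ab$. For part (i), the hypothesis $\max\{i,j\}<\max\{k,l\}$ forces $\{i,j\}$ and $\{k,l\}$ to be configured so that the relevant commuting relations from~\eqref{eq:purecommuting} apply directly (distinct indices), hence $\beta$ commutes with $\alpha$ and $\alpha^{\beta^{\pm1}}=\alpha$; this is immediate. For parts (ii)--(v), I would in each case start from the appropriate Yang--Baxter-type relation among~\eqref{eq:pureYB}, \eqref{eq:pureYB-mixt}, \eqref{eq:pureYB-mixt-two}, \eqref{eq:pureTwist}, specialized to the index pattern $i<j<k$ (or $j<i<k$), and solve it for the conjugated generator. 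For instance, to get the first identity in (ii), rewrite $\mu_{ij}\mu_{ik}\mu_{jk}=\mu_{jk}\mu_{ik}\mu_{ij}$ as $\mu_{ij}^{-1}\mu_{jk}\mu_{ik}\mu_{ij} = \mu_{ik}\mu_{jk}$, i.e. $(\mu_{jk}\mu_{ik})^{\mu_{ij}} = \mu_{ik}\mu_{jk}$, and then isolate $\mu_{ik}^{\mu_{ij}}$ using that $\mu_{jk}^{\mu_{ij}}$ can be computed from the same relation (or, more efficiently, use $\mu_{ik}^{\mu_{ij}} = (\mu_{jk}^{\mu_{ij}})^{-1}\,\mu_{ik}\mu_{jk}$ and substitute $\mu_{jk}^{\mu_{ij}} = \mu_{kj}^{\mu_{ij}}\mu_{ik}\mu_{kj}^{-1}\mu_{ik}^{-1}\cdots$ — here one also needs the relation $\mu_{ik}\gamma_{ki}=\gamma_{ik}\mu_{ki}$, equivalently facts about $\mu_{kj}$ versus $\mu_{jk}$). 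The role of relation~\eqref{eq:pureTwist}, $\mu_{ij}\gamma_{ji}=\gamma_{ij}\mu_{ji}$, and its $\mu$-only shadow, is precisely to convert between the "forward" strings $\mu_{jk}$ and the "backward" strings $\mu_{kj}$ that appear on the two sides; this is why $\mu_{kj}$ and $\gamma_{kj}$ (with reversed indices) show up in the right-hand sides.

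Concretely, I would organize the computation as follows. First, record the "solved" forms of the four basic relations specialized to a triple $i<j<k$: from~\eqref{eq:pureYB} one reads off $\mu_{ij}^{\mu_{ik}}$, $\mu_{jk}^{\mu_{ik}}$, and $\mu_{ik}^{\mu_{ij}}$ etc.; from~\eqref{eq:pureYB-mixt} and~\eqref{eq:pureYB-mixt-two} the mixed versions with one $\gamma$; and from~\eqref{eq:pureTwist} the twist. Second, for each of the roughly twenty identities in (ii)--(v), identify which of these solved relations it is (possibly after inverting, since $\alpha^{\beta^{-1}}$ is obtained from $\alpha^\beta$ by applying $\beta$-conjugation twice, or directly by conjugating the solved relation by $\beta$). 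Third, handle the $\gamma_{ij}$- and $\bgamma_{ij}$-conjugates: here one cannot simply use~\eqref{eq:pureYB-mixt}/\eqref{eq:pureYB-mixt-two} as-is because those have the $\gamma$ in a fixed slot, so one first conjugates the pure-$\mu$ identity by an extra factor and then applies~\eqref{eq:pureTwist} to trade $\mu_{ij}$-conjugation for $\gamma_{ij}$-conjugation. This explains the appearance of conjugated symbols like $\gamma_{jk}^{-\gamma_{ij}}$ and $\mu_{kj}^{-\gamma_{ij}}$ nested inside the right-hand sides of (ii) and (iv): those are exactly $\mu_{ij}\to\gamma_{ij}$ substitution artifacts.

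The main obstacle will be bookkeeping rather than conceptual: the presentation~\eqref{eq:pureYB}--\eqref{eq:purecommuting} is not symmetric in $i$ and $j$ (the strings $\mu_{ij}$ and $\mu_{ji}$ are genuinely different elements), so for each identity I must carefully check which of the two orderings $i<j<k$ or $j<i<k$ is in force, and track whether the index appearing is the "over" or "under" variant. A clean way to keep this under control is to fix once and for all, for a triple of strands, the four relations~\eqref{eq:pureYB}--\eqref{eq:pureTwist} together with their images under the $S_n$-action of Corollary~\ref{Sn Action-VS} (swapping $i\leftrightarrow j$ inside a triple), giving a finite catalogue of identities; then every line of Lemma~\ref{VSPGn Conjugation} is a direct algebraic rearrangement of one catalogue entry. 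The verification of each line is then a short computation of the type $b^{-1}ab = (\text{known relation})$, and I would present two or three representative cases in full — say the first identity of (ii), the $\gamma_{ij}$-conjugate in (ii), and one line of (iv) — and indicate that the remaining cases are entirely analogous, obtained by the same substitutions and by passing to inverses. I would also double-check consistency by verifying that applying $\pi$ (the map to $S_n$) to each identity gives a true identity of permutations, which it automatically does since all the generalized fusing strings lie in $\ker\pi = VSPG_n$.
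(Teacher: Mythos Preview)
Your overall plan is correct and is exactly what the paper does: every line of the lemma is obtained by specializing one of the relations \eqref{eq:pureYB}--\eqref{eq:purecommuting} to an appropriate ordered triple and solving for the desired conjugate. Two concrete corrections to your worked example, though. First, for the opening identity in (ii) the paper applies \eqref{eq:pureYB} to the triple $(i,k,j)$, i.e.\ $\mu_{ik}\mu_{ij}\mu_{kj}=\mu_{kj}\mu_{ij}\mu_{ik}$, and then simply inserts $\mu_{kj}\mu_{kj}^{-1}$ on the right of $\mu_{ij}^{-1}\mu_{ik}\mu_{ij}$ to get $\mu_{kj}^{\mu_{ij}}\mu_{ik}\mu_{kj}^{-1}$ directly. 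Your attempt starts from the $(i,j,k)$ specialization instead, which produces $\mu_{jk}$-terms; there is no relation in the presentation that converts $\mu_{jk}$ into $\mu_{kj}$ purely among the $\mu$'s (the ``$\mu$-only shadow'' of \eqref{eq:pureTwist} you allude to does not exist), so that route does not close. Second, for the $\gamma_{ij}$-conjugates in (ii) the paper does not ``trade $\mu_{ij}$-conjugation for $\gamma_{ij}$-conjugation'': it uses \eqref{eq:pureYB-mixt-two} in the form $\gamma_{ij}\mu_{ik}\mu_{jk}=\mu_{jk}\mu_{ik}\gamma_{ij}$ to get $\mu_{ik}^{\gamma_{ij}}=(\mu_{jk}^{-1})^{\gamma_{ij}}\mu_{ik}\mu_{jk}$, and only then invokes \eqref{eq:pureTwist} (as $\mu_{jk}^{-1}=\bgamma_{jk}\mu_{kj}^{-1}\gamma_{kj}$) to rewrite the leftover $\mu_{jk}^{-1}$ in terms of $\mu_{kj}$ and $\gamma$'s. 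With these two adjustments your catalogue-and-rearrange strategy matches the paper's proof exactly; the remaining lines in (iii)--(v) are handled just as you describe, with (iv) and (v) also recycling the already-proved identities from (ii).
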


\begin{proof} Let $1\le i,j,k,l \le n$, where different letters stand for distinct indices. 

(i) This statement follows immediately from the commuting relations~\eqref{eq:purecommuting} given in Theorem \ref{VSPGn Gamma Presentation}. 

(ii) The first two identities follow from the relation $\mu_{ik}\mu_{ij}\mu_{kj}=\mu_{kj}\mu_{ij}\mu_{ik}$ (this is relation~\eqref{eq:pureYB}), as demonstrated below:
\[\mu_{ik}^{\mu_{ij}} = \mu_{ij}^{-1}\mu_{ik}\mu_{ij} = \mu_{ij}^{-1} (\mu_{ik}\mu_{ij} \mu_{kj})\mu_{kj}^{-1} = (\mu_{ij}^{-1}\mu_{kj}\mu_{ij})\mu_{ik}\mu_{kj}^{-1}= \mu_{kj}^{\mu_{ij}}\mu_{ik}\mu_{kj}^{-1};   \]
\[\mu_{ik}^{\mu_{ij}^{-1}} = \mu_{ij}\mu_{ik}\mu_{ij}^{-1} = \mu_{kj}^{-1} (\mu_{kj} \mu_{ij}\mu_{ik})\mu_{ij}^{-1} =  \mu_{kj}^{-1}\mu_{ik}(\mu_{ij}\mu_{kj}\mu_{ij}^{-1})= \mu_{kj}^{-1}\mu_{ik}\mu_{kj}^{\mu_{ij}^{-1}}. \]


The third and fourth identities in (ii) follow from the relations $\gamma_{ij}\mu_{ik}\mu_{jk}=\mu_{jk}\mu_{ik}\gamma_{ij}$ and $\gamma_{jk} \mu_{jk}^{-1} = \mu_{kj}^{-1} \gamma_{kj}$ (see relations~\eqref{eq:pureYB-mixt-two} and~\eqref{eq:pureTwist}), as we now show:

\[\mu_{ik}^{\gamma_{ij}}= \bgamma_{ij}\mu_{ik}\gamma_{ij} = \bgamma_{ij} \mu_{jk}^{-1} (\mu_{jk}  \mu_{ik}\gamma_{ij}) 
 =  \bgamma_{ij} \mu_{jk}^{-1} \gamma_{ij} \mu_{ik} \mu_{jk} = (\mu_{jk}^{-1})^{\gamma_{ij}}\mu_{ik} \mu_{jk}. \]
 But since $\gamma_{jk} \mu_{jk}^{-1} = \mu_{kj}^{-1} \gamma_{kj}$, we have that $\mu^{-1}_{jk} = \bgamma_{jk} \mu_{kj}^{-1} \gamma_{kj}$, and thus
 
 \[\mu_{ik}^{\gamma_{ij}}= (\mu_{jk}^{-1})^{\gamma_{ij}}\mu_{ik} \mu_{jk} = (\bgamma_{jk} \mu_{kj}^{-1} \gamma_{kj})^{\gamma_{ij}} \mu_{ik} \mu_{jk} =
  \gamma_{jk}^{-\gamma_{ij}} \mu_{kj}^{-\gamma_{ij}} \gamma_{kj}^{\gamma_{ij}} \mu_{ik} \mu_{jk}.  \]
  Similarly,
 
\[\mu_{ik}^{\bgamma_{ij}} =\gamma_{ij}\mu_{ik}\bgamma_{ij} = (\gamma_{ij}\mu_{ik} \mu_{jk}) \mu_{jk}^{-1} \bgamma_{ij} = \mu_{jk} \mu_{ik} \gamma_{ij}\mu_{jk}^{-1} \bgamma_{ij}  = \mu_{jk} \mu_{ik} \mu_{jk}^{-\bgamma_{ij}}.
 \]
  Using again that $\mu^{-1}_{jk} = \bgamma_{jk} \mu_{kj}^{-1} \gamma_{kj}$, we obtain,
  \[ \mu_{ik}^{\bgamma_{ij}} = \mu_{jk} \mu_{ik} \mu_{jk}^{-\bgamma_{ij}} = \mu_{jk} \mu_{ik} (\bgamma_{jk} \mu_{kj}^{-1} \gamma_{kj} )^{\bgamma_{ij}} = \mu_{jk} \mu_{ik} \gamma_{jk}^{-\bgamma_{ij}}  \mu_{kj}^{-\bgamma_{ij}}  \gamma_{kj}^{\bgamma_{ij}}.   \]
  

The last two identities in (ii) follow from the relation $\gamma_{ik}\mu_{ij}\mu_{kj}=\mu_{kj}\mu_{ij}\gamma_{ik}$, and can be verified in a similar manner as the first two identities, with the difference that $\mu_{ik}$ is replaced with $\gamma_{ik}$.

(iii) To obtain the first identity, we make use of the relation $\mu_{ki}\mu_{kj}\mu_{ij} = \mu_{ij}\mu_{kj}\mu_{ki}$, as shown below:
\[ \mu_{ki}^{\mu_{ij}} = \mu_{ij}^{-1}\mu_{ki}\mu_{ij} =  \mu_{ij}^{-1} (\mu_{ki} \mu_{kj}\mu_{ij}) (\mu_{ij}^{-1} \mu_{kj}^{-1} \mu_{ij})  = \mu_{ij}^{-1} (\mu_{ij}\mu_{kj}\mu_{ki})\mu_{kj}^{-\mu_{ij}} = \mu_{kj}\mu_{ki}\mu_{kj}^{-\mu_{ij}}. \]
The second identity in (iii) follows from the same relation, as the following equalities demonstrate:
\begin{eqnarray*}
\mu_{ki}^{\mu_{ij}^{-1}} &=& \mu_{ij}\mu_{ki}\mu_{ij}^{-1} = \mu_{ij}(\mu_{kj}^{-1}\mu_{ij}^{-1}\mu_{ij}\mu_{kj})
	\mu_{ki}\mu_{ij}^{-1} = \mu_{kj}^{-\mu_{ij}^{-1}} (\mu_{ij}\mu_{kj} \mu_{ki})\mu_{ij}^{-1} \\
	&=&  \mu_{kj}^{-\mu_{ij}^{-1}} (\mu_{ki}\mu_{kj} \mu_{ij})\mu_{ij}^{-1} = \mu_{kj}^{-\mu_{ij}^{-1}}\mu_{ki}\mu_{kj}.
\end{eqnarray*}

The third and fourth identities in (iii) follow from the relation $\mu_{ki}\mu_{kj}\gamma_{ij}=\gamma_{ij}\mu_{kj}\mu_{ki}$. They can be derived in the same way as the first two identities, with the difference that $\mu_{ij}$ is replaced by $\gamma_{ij}$ and $\mu_{ij}^{-1}$ by $\bgamma_{ij}$. The fifth and sixth relations in (iii) can be obtained in the same way as the first two relations in (iii), by replacing $\mu_{ki}$ with $\gamma_{ki}$ and making use of the relation $\gamma_{ki}\mu_{kj}\mu_{ij} = \mu_{ij}\mu_{kj}\gamma_{ki}$. 

(iv) To verify the first identity, we start off with the following sequence of equalities:
\begin{eqnarray*}
\mu_{jk}^{\mu_{ij}} &=& \mu_{ij}^{-1}\mu_{jk}\mu_{ij} = \mu_{ij}^{-1}\mu_{jk}(\mu_{ik}\mu_{ij}\mu_{ij}^{-1}\mu_{ik}^{-1})\mu_{ij} =
 \mu_{ij}^{-1} (\mu_{jk}\mu_{ik}\mu_{ij}) \mu_{ij}^{-1}\mu_{ik}^{-1}\mu_{ij}\\
 &=&\mu_{ij}^{-1}(\mu_{ij}\mu_{ik}\mu_{jk})(\mu_{ij}^{-1}\mu_{ik}^{-1}\mu_{ij}) =  \mu_{ik}\mu_{jk}\mu_{ik}^{-\mu_{ij}},
 \end{eqnarray*}
 where the fourth equality holds due to the relation $\mu_{ij}\mu_{ik}\mu_{jk} = \mu_{jk}\mu_{ik}\mu_{ij}$.
Using the first identity in (ii), we have that:
\[\mu_{ik}^{-\mu_{ij}}=\left(\mu_{ik}^{\mu_{ij}}\right)^{-1}=\left(\mu_{kj}^{\mu_{ij}}\mu_{ik}\mu_{kj}^{-1} \right)^{-1}=\mu_{kj}\mu_{ik}^{-1}\mu_{kj}^{-\mu_{ij}}. \]
Therefore,
\[ \mu_{jk}^{\mu_{ij}}=\mu_{ik}\mu_{jk}\mu_{ik}^{-\mu_{ij}}=\mu_{ik}\mu_{jk}\mu_{kj}\mu_{ik}^{-1}\mu_{kj}^{-\mu_{ij}}. \]
Similarly, 
\begin{eqnarray*}
\mu_{jk}^{\mu_{ij}^{-1}} &=&\mu_{ij}\mu_{jk}\mu_{ij}^{-1}= \mu_{ij}(\mu_{ik}^{-1}\mu_{ij}^{-1}\mu_{ij} \mu_{ik})\mu_{jk}\mu_{ij}^{-1} = \mu_{ik}^{-\mu_{ij}^{-1}}(\mu_{ij} \mu_{ik}\mu_{jk})\mu_{ij}^{-1} \\
&=&  \mu_{ik}^{-\mu_{ij}^{-1}}(\mu_{jk} \mu_{ik}\mu_{ij})\mu_{ij}^{-1}=  \mu_{ik}^{-\mu_{ij}^{-1}}\mu_{jk}\mu_{ik}.
\end{eqnarray*}
From the second identity in (ii), we have that
\[\mu_{ik}^{-\mu_{ij}^{-1}}=\left(\mu_{ik}^{\mu_{ij}^{-1}}\right)^{-1}=\left(\mu_{kj}^{-1}\mu_{ik}\mu_{kj}^{\mu_{ij}^{-1}} \right)^{-1}=\mu_{kj}^{-\mu_{ij}^{-1}}\mu_{ik}^{-1}\mu_{kj}, \]
and therefore,
\[\mu_{jk}^{\mu_{ij}^{-1}}
=\mu_{ik}^{-\mu_{ij}^{-1}}\mu_{jk}\mu_{ik}=\mu_{kj}^{-\mu_{ij}^{-1}}\mu_{ik}^{-1}\mu_{kj}\mu_{jk}\mu_{ik}. \]
Hence, the second identity in (iv) holds. 

The third and fourth identities in (iv) are obtained using the relation $\mu_{jk} \gamma_{kj} = \gamma_{jk} \mu_{kj}$ (this is relation~\eqref{eq:pureTwist}). Hence, $\mu_{jk}  =  \gamma_{jk} \mu_{kj} \bgamma_{kj}$, and we obtain the following:
\[  \mu_{jk}^{\gamma_{ij}} = (\gamma_{jk} \mu_{kj} \bgamma_{kj})^{\gamma_{ij}} = \gamma_{jk}^{\gamma_{ij}} \mu_{kj}^{\gamma_{ij}} \gamma_{kj}^{-\gamma_{ij}} \,\, \text{and} \,\,
\mu_{jk}^{\bgamma_{ij}} = (\gamma_{jk} \mu_{kj} \bgamma_{kj})^{\bgamma_{ij}}  = \gamma_{jk}^{\bgamma_{ij}} \mu_{kj}^{\bgamma_{ij}}\gamma_{kj}^{-\bgamma_{ij}}.  \]

Finally, the last two identities in (iv) can be verified by substituting $\mu_{jk}$ with $\gamma_{jk}$ in the proofs of the first two identities in (iv), and making use of relation $\mu_{ij} \mu_{ik} \gamma_{jk} = \gamma_{jk} \mu_{ik} \mu_{ij}$, which is relation~\eqref{eq:pureYB-mixt} from Theorem~\ref{VSPGn Gamma Presentation}.


(v) We start demonstrating the first identity: 
\begin{eqnarray*}
\gamma_{kj}^{\mu_{ij}} &=& \mu_{ij}^{-1}\gamma_{kj}\mu_{ij} = \mu_{ij}^{-1} (\gamma_{kj}\mu_{ij} \mu_{ik}) \mu_{ik}^{-1} = \mu_{ij}^{-1}(\mu_{ik}\mu_{ij} \gamma_{kj}) \mu_{ik}^{-1}\\
&=& \mu_{ik}^{\mu_{ij}}\gamma_{kj} \mu_{ik}^{-1}= \mu_{kj}^{\mu_{ij}}\mu_{ik}\mu_{kj}^{-1} \gamma_{kj}\mu_{ik}^{-1}.
\end{eqnarray*}
The last equality above holds due to the first identity in (ii). The second identity in (v) can be verified as demonstrated below:
\begin{eqnarray*}
\gamma_{kj}^{\mu_{ij}^{-1}} &=& \mu_{ij}\gamma_{kj}\mu_{ij}^{-1} = (\mu_{ik}^{-1}\mu_{ik})(\mu_{ij}\gamma_{kj}\mu_{ij}^{-1}) =  \mu_{ik}^{-1} (\mu_{ik} \mu_{ij}\gamma_{kj}) \mu_{ij}^{-1}\\
&=&  \mu_{ik}^{-1} (\gamma_{kj} \mu_{ij}\mu_{ik}) \mu_{ij}^{-1} = \mu_{ik}^{-1}\gamma_{kj}\mu_{ik}^{\mu_{ij}^{-1}}  = \mu_{ik}^{-1}\gamma_{kj}\mu_{kj}^{-1}\mu_{ik} \mu_{kj}^{\mu_{ij}^{-1}},
\end{eqnarray*}
where the last equality holds due to the second identity in (ii). This completes the proof of Lemma~\ref{VSPGn Conjugation}.
\end{proof}

We will now look into the structure of the virtual singular pure braid group $VSPG_n$, using Bardakov's approach in~\cite{Bardakov} for the case of the virtual pure braid group. 

For each $2 \le i \le n$, we define 
\begin{align*}
VS_{i-1} : =& \langle \mu_{1, i},\,  \mu_{2, i}, \, \dots, \, \mu_{i-1, i},\,  \mu_{i,1},\,  \mu_{i, 2},\,  \dots,\,  \mu_{i, i-1}; \\
& \gamma_{1, i}, \, \gamma_{2, i},\,  \dots, \, \gamma_{i-1, i}, \, \gamma_{i, 1},  \,\gamma_{i, 2}, \, \dots, \, \gamma_{i, i-1} \rangle
\end{align*}
to be the subgroup of $VSPG_n$ generated by the elements listed above. Notice that each $VS_{i-1}$ is also a subgroup of $VSPG_{i}$. We define $VS^*_{i-1}$ to be the normal closure of $VS_{i-1}$ in $VSPG_{i}$. We attempt to represent $VSPG_n$ as a semi-direct product using these normal subgroups. We first consider the cases where $n=2$ and $n=3$.

Note that for $n=2$, we have $VSPG_2=VS_1=VS^*_1$. By Theorem~\ref{VSPGn Gamma Presentation}, the group $VSPG_2$, and hence $VS_1^*$, is generated by $\mu_{12}, \mu_{21}$ and $\gamma_{12}, \gamma_{21}$ with relation $\mu_{12}\gamma_{21} = \gamma_{12}\mu_{21}$.

We have that $VS_1=\langle \mu_{12},\mu_{21},\gamma_{12},\gamma_{21} \rangle$ and $VS_2=\langle \mu_{13},\mu_{23},\mu_{31},\mu_{32},\gamma_{13},\gamma_{23},\gamma_{31},\gamma_{32} \rangle$. Hence, $VSPG_3$ is generated by the generators of its subgroups $VS_1$ and $VS_2$.

\begin{definition}
We will refer to a braid in $VSG_n$ written in terms of the generators $\mu_{ij}^{\pm 1}, \gamma_{ij}$ and $\bgamma_{ij}$ as a \textit{word}. A word that cannot be simplified is called a \textit{reduced word}. Specifically, a reduced word is a word that does not contain products of elements that are inverses of each other.

 Let $w(\mu_{12}, \gamma_{12})$ be the empty word or a reduced word in $VS_1$ beginning with a non-zero power of $\mu_{12}$ or $\gamma_{12}$. The \textit{reduced power of the generator $\mu_{32}$}  is defined as $\mu_{32}^{w(\mu_{12}, \gamma_{12})}$. 

Let $w(\mu_{21}, \gamma_{21})$ be the empty word or a reduced word in $VS_1$ beginning with a non-zero power of $\mu_{21}$ or $\gamma_{21}$. The \textit{reduced power of the generator $\mu_{31}$} is defined as $\mu_{31}^{w(\mu_{21}, \gamma_{21})}$. 

 Let $w(\gamma_{12}, \gamma_{21})$ be the empty word or a reduced word in $VS_1$ beginning with a non-zero power of $\gamma_{12}$ or $\gamma_{21}$. For all $\gamma_{ij} \in \{ \gamma_{13}, \gamma_{23}, \gamma_{31}, \gamma_{32} \}$, the \textit{reduced power of the generators $\gamma_{ij}$} are defined as $\gamma_{ij}^{w(\gamma_{12}, \gamma_{21})}$. \end{definition}

\begin{proposition} \label{VSPG3 Structure}
The group $VSPG_3$ can be represented as the semi-direct product
\[VSPG_3=VS_2^* \rtimes VSPG_2=VS_2^* \rtimes VS_1^*.\]
Moreover, the group $VS_2^*$ is generated by $\mu_{13},\mu_{23}$ and all reduced powers of $\mu_{31},\mu_{32}$ and of $\gamma_{13}, \gamma_{23}, \gamma_{31}, \gamma_{32}$. 
\end{proposition}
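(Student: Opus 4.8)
The plan is to prove the semi-direct product decomposition in two stages: first establish the abstract splitting, then identify generators of the kernel. For the splitting, I would consider the homomorphism $p \co VSPG_3 \to VSPG_2$ obtained by "deleting the third strand," that is, the map sending $\mu_{12}, \mu_{21}, \gamma_{12}, \gamma_{21}$ to themselves and sending all generators carrying an index $3$ (namely $\mu_{13}, \mu_{23}, \mu_{31}, \mu_{32}, \gamma_{13}, \gamma_{23}, \gamma_{31}, \gamma_{32}$) to $1_3$. One must check that $p$ respects all the defining relations of $VSPG_3$ from Theorem~\ref{VSPGn Gamma Presentation}: each relation either survives into the corresponding relation of $VSPG_2$ (which, by the remark before this proposition, has only the single relation $\mu_{12}\gamma_{21}=\gamma_{12}\mu_{21}$) or becomes trivial because it involves at least one generator with index $3$ on each side in a way that collapses. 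Because $VS_1 = \langle \mu_{12}, \mu_{21}, \gamma_{12}, \gamma_{21}\rangle$ sits inside $VSPG_3$ and $p$ restricts to the identity on it (viewing $VSPG_2 = VS_1^*$), $p$ is a retraction onto $VSPG_2$; hence the inclusion $VSPG_2 \hookrightarrow VSPG_3$ is a section, $\ker p$ is normal, and $VSPG_3 = \ker p \rtimes VSPG_2$. By definition $\ker p$ contains $VS_2 = \langle \mu_{13}, \mu_{23}, \mu_{31}, \mu_{32}, \gamma_{13}, \gamma_{23}, \gamma_{31}, \gamma_{32}\rangle$ and, being normal, contains its normal closure $VS_2^*$; the reverse inclusion $\ker p \subseteq VS_2^*$ follows because $VSPG_3$ is generated by the generators of $VS_1$ and $VS_2$, so any element of $\ker p$ can be written as a product of an element of $VS_2^*$ and an element of $VS_1 = VSPG_2$ (pushing all $VS_1$-letters to one side modulo $VS_2^*$), and the $VSPG_2$-factor must be trivial since $p$ kills the element. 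This gives $\ker p = VS_2^*$ and the decomposition $VSPG_3 = VS_2^* \rtimes VS_1^*$.

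For the second assertion, I would argue that $VS_2^*$, being the normal closure of $VS_2$ in $VSPG_3$, is generated by all conjugates $g\, x\, g^{-1}$ with $x \in \{\mu_{13}, \mu_{23}, \mu_{31}, \mu_{32}, \gamma_{13}, \gamma_{23}, \gamma_{31}, \gamma_{32}\}$ and $g \in VSPG_3$. Since $VSPG_3 = VS_2^* \cdot VS_1$, and conjugation by elements of $VS_2^*$ keeps us inside the subgroup generated by $VS_2$-letters and their $VS_1$-conjugates, it suffices to understand conjugation of each $VS_2$-generator by elements of $VS_1 = \langle \mu_{12}, \mu_{21}, \gamma_{12}, \gamma_{21}\rangle$. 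This is exactly what Lemma~\ref{VSPGn Conjugation} computes (with $n=3$, $\{i,j\}=\{1,2\}$, $k=3$): conjugating $\mu_{13}$ or $\mu_{23}$ by $\mu_{12}^{\pm1}$ or $\gamma_{12}^{\pm1}$ yields words in $\mu_{13}, \mu_{23}$ together with (reduced powers of) $\mu_{32}$ and $\gamma_{32}, \gamma_{23}$ — see parts (ii) and (iv) — while conjugating $\mu_{31}, \mu_{32}$ and the $\gamma$-generators by the $VS_1$-generators again produces only $\mu_{13}, \mu_{23}$ and reduced powers of $\mu_{31}, \mu_{32}, \gamma_{13}, \gamma_{23}, \gamma_{31}, \gamma_{32}$ — see parts (iii) and (v), and the commuting relations of part~(i). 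One also uses that $\mu_{13}$ and $\mu_{23}$ themselves commute with $\mu_{12}$ appropriately, or more precisely that the relevant conjugations of $\mu_{13}, \mu_{23}$ stay within the stated generating set, so no "reduced power" qualifier is needed on $\mu_{13}, \mu_{23}$. Iterating these conjugation rules shows every generator of $VS_2^*$ lies in the subgroup generated by $\mu_{13}, \mu_{23}$ and all reduced powers of $\mu_{31}, \mu_{32}, \gamma_{13}, \gamma_{23}, \gamma_{31}, \gamma_{32}$; the reverse containment is clear since each such reduced power is by definition a conjugate of a $VS_2$-generator.

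The main obstacle I anticipate is the bookkeeping in the second part: one must verify that the conjugation formulas of Lemma~\ref{VSPGn Conjugation}, when applied repeatedly, never introduce a generator outside the claimed list and, in particular, that a generator like $\mu_{32}$ only ever appears raised to a power of the form $w(\mu_{12},\gamma_{12})$ (and similarly $\mu_{31}$ only to powers $w(\mu_{21},\gamma_{21})$, and the four $\gamma$-generators only to powers $w(\gamma_{12},\gamma_{21})$) — i.e., that the "reduced power" structure is closed under the dynamics. This requires checking, from the explicit right-hand sides in Lemma~\ref{VSPGn Conjugation}(ii)--(v), which $VS_1$-letters can legitimately appear in the conjugating exponent of each $VS_2$-generator; for instance, $\mu_{32}$ is conjugated only by $\mu_{12}^{\pm1}$ and $\gamma_{12}^{\pm1}$ in those formulas (never by $\mu_{21}$ or $\gamma_{21}$), which is what forces the exponent to be a word in $\mu_{12}, \gamma_{12}$ alone. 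Once this closure observation is made precise, the proposition follows.
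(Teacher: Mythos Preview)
Your proposal is correct and, for the second assertion about generators of $VS_2^*$, follows exactly the paper's approach: both arguments reduce to the conjugation formulas of Lemma~\ref{VSPGn Conjugation} specialized to $n=3$, and both identify the key closure observation (that conjugating $\mu_{32}$ only ever introduces exponents in $\mu_{12},\gamma_{12}$, conjugating $\mu_{31}$ only exponents in $\mu_{21},\gamma_{21}$, etc.). For the first assertion, however, you take a slightly different route. The paper argues internally: it asserts $VS_2^* \cap VSPG_2=\{1_3\}$, shows $VSPG_3=VS_2^*\cdot VSPG_2$ via the rewriting $\alpha\beta=(\alpha\beta\alpha^{-1})\alpha$, and invokes the internal semi-direct product criterion. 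You instead construct an explicit retraction $p\co VSPG_3\to VSPG_2$ (strand deletion) and identify $VS_2^*$ with $\ker p$. Your approach has the advantage that the triviality of $VS_2^*\cap VSPG_2$ is an immediate consequence ($p$ is the identity on $VSPG_2$ and kills $VS_2^*$), whereas the paper states this without justification; the cost is that you must check $p$ is well-defined on the presentation of Theorem~\ref{VSPGn Gamma Presentation}, which is routine since every three-index relation in $VSPG_3$ involves the index $3$ and collapses, while the two-index relation $\mu_{12}\gamma_{21}=\gamma_{12}\mu_{21}$ survives. Both arguments ultimately rely on the same rewriting trick to show $VSPG_3=VS_2^*\cdot VSPG_2$ (equivalently $\ker p\subseteq VS_2^*$).
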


\begin{proof}
We first note that $VS_2^* \cap VSPG_2=\{1_3\}$. We know that $VSPG_3$ is generated by the generators of $VS_2$ and  those of $VSPG_2 = VS_1$. Let $\alpha \in VSPG_2$ and $\beta \in VS_2$. Then,
\[ \alpha \beta = (\alpha \beta \alpha^{-1}) \alpha \in VS_2^* \cdot VSPG_2. \]
Hence, every element in $VSPG_3$ can be written in the form $w_2w_1$, where $w_1 \in VSPG_2$ and $w_2 \in VS_2^*$, and therefore, $VSPG_3=VS_2^* \rtimes VSPG_2$. Recall also that $VSPG_2 = VS_1 = VS_1^*$.
 
We know that $VS_2^*=\langle \mu_{13}^w,\mu_{23}^w,\mu_{31}^w,\mu_{32}^w; \gamma_{13}^w,\gamma_{23}^w,\gamma_{31}^w,\gamma_{32}^w \rangle$, where $w \in VS_1$. Due to Lemma~\ref{VSPGn Conjugation}, the following relations hold in $VSPG_3$.
\begin{itemize}
\item Conjugations by $\mu_{12}$ and $\mu_{12}^{-1}$ on $\mu_{13}, \mu_{23}, \mu_{31}$:
\begin{multicols}{2}
\begin{enumerate}
\item[(i)] $\mu_{13}^{\mu_{12}} =\mu_{32}^{\mu_{12}}\mu_{13}\mu_{32}^{-1}$ 
\item[(ii)] $\mu_{31}^{\mu_{12}} =\mu_{32}\mu_{31}\mu_{32}^{-\mu_{12}}$ 
\item[(iii)] $\mu_{23}^{\mu_{12}} =\mu_{13}\mu_{23}\mu_{32}\mu_{13}^{-1}\mu_{32}^{-\mu_{12}}$ 
\item[(iv)] $\mu_{13}^{\mu_{12}^{-1}} =\mu_{32}^{-1}\mu_{13}\mu_{32}^{\mu_{12}^{-1}}$
\item[(v)] $\mu_{31}^{\mu_{12}^{-1}} =\mu_{32}^{-\mu_{12}^{-1}}\mu_{31}\mu_{32}$
\item[(vi)] $\mu_{23}^{\mu_{12}^{-1}} =\mu_{32}^{-\mu_{12}^{-1}}\mu_{13}^{-1}\mu_{32}\mu_{23}\mu_{13}$
\end{enumerate}
\end{multicols}

\item Conjugations by $\mu_{21}$ and $\mu_{21}^{-1}$ on $\mu_{13}, \mu_{23}, \mu_{32}$:
\begin{multicols}{2}
\begin{enumerate}
\item[(i)] $\mu_{23}^{\mu_{21}} =\mu_{31}^{\mu_{21}}\mu_{23}\mu_{31}^{-1}$ 
\item[(ii)] $\mu_{32}^{\mu_{21}} =\mu_{31}\mu_{32}\mu_{31}^{-\mu_{21}}$ 
\item[(iii)] $\mu_{13}^{\mu_{21}} =\mu_{23}\mu_{13}\mu_{31}\mu_{23}^{-1}\mu_{31}^{-\mu_{21}}$ 
\item[(iv)] $\mu_{23}^{\mu_{21}^{-1}} =\mu_{31}^{-1}\mu_{23}\mu_{31}^{\mu_{21}^{-1}}$
\item[(v)] $\mu_{32}^{\mu_{21}^{-1}} =\mu_{31}^{-\mu_{21}^{-1}}\mu_{32}\mu_{31}$
\item[(vi)] $\mu_{13}^{\mu_{21}^{-1}} =\mu_{31}^{-\mu_{21}^{-1}}\mu_{23}^{-1}\mu_{31}\mu_{13}\mu_{23}$
\end{enumerate}
\end{multicols}


\item Conjugations by $\gamma_{12}$ and $\bgamma_{12}$ on $\mu_{13}, \mu_{23}, \mu_{31}$:
\begin{multicols}{2}
\begin{enumerate}

\item[(i)] $\mu_{13}^{\gamma_{12}} = \gamma_{23}^{-\gamma_{12}} \mu_{32}^{-\gamma_{12}} \gamma_{32}^{\gamma_{12}} \mu_{13} \mu_{23}$

\item[(ii)] $\mu_{31}^{\gamma_{12}} =\mu_{32}\mu_{31}\mu_{32}^{-\gamma_{12}}$

\item[(iii)] $\mu_{23}^{\gamma_{12}} =\gamma_{23}^{\gamma_{12}}\mu_{32}^{\gamma_{12}}\gamma_{32}^{-\gamma_{12}}$
\item[(iv)] $\mu_{13}^{\bgamma_{12}} = \mu_{23}\mu_{13} \gamma_{23}^{-\bgamma_{12}} \mu_{32}^{- \bgamma_{12}} \gamma_{32}^{\gamma_{12}}$

\item[(v)] $\mu_{31}^{\bgamma_{12}} =\mu_{32}^{-\bgamma_{12}}\mu_{31}\mu_{32}$\

\item[(vi)] $\mu_{23}^{\bgamma_{12}} = \gamma_{23}^{\bgamma_{12}}\mu_{32}^{\bgamma_{12}}\gamma_{32}^{-\bgamma_{12}}$


\end{enumerate}
\end{multicols}


\item Conjugations by $\gamma_{21}$ and $\bgamma_{21}$ on $\mu_{13}, \mu_{23}, \mu_{32}$:
\begin{multicols}{2}
\begin{enumerate}

\item[(i)] $\mu_{23}^{\gamma_{21}} = \gamma_{13}^{-\gamma_{21}} \mu_{31}^{\ \gamma_{21}} \gamma_{31}^{\gamma_{21}} \mu_{23} \mu_{13}$

\item[(ii)] $\mu_{32}^{\gamma_{21}} =\mu_{31}\mu_{32}\mu_{31}^{-\gamma_{21}}$

\item[(iii)] $\mu_{13}^{\gamma_{21}} = \gamma_{13}^{\gamma_{21}} \mu_{31}^{\gamma_{21}} \gamma_{31}^{-\gamma_{21}}$
\item[(iv)] $\mu_{23}^{\bgamma_{21}} = \mu_{13} \mu_{23} \gamma_{13}^{-\bgamma_{21}} \mu_{31}^{\bgamma_{21}} \gamma_{31}^{\bgamma_{21}}$

\item[(v)] $\mu_{32}^{\bgamma_{21}} =\mu_{31}^{-\bgamma_{21}}\mu_{32}\mu_{31}$

\item[(vi)] $\mu_{13}^{\bgamma_{21}} =  \gamma_{13}^{\bgamma_{21}} \mu_{31}^{\bgamma_{21}} \gamma_{31}^{-\bgamma_{21}}$


\end{enumerate}
\end{multicols}

\item Conjugations by $\mu_{12}$ and $\mu_{12}^{-1}$ on $\gamma_{13}, \gamma_{23}, \gamma_{31}$, and $\gamma_{32}$:
\begin{multicols}{2}
\begin{enumerate}
\item[(i)] $\gamma_{13}^{\mu_{12}} =\mu_{32}^{\mu_{12}}\gamma_{13}\mu_{32}^{-1}$
\item[(ii)] $\gamma_{31}^{\mu_{12}} =\mu_{32}\gamma_{31}\mu_{32}^{-\mu_{12}}$
\item[(iii)] $\gamma_{23}^{\mu_{12}} =\mu_{13}\gamma_{23}\mu_{32}\mu_{13}^{-1}\mu_{32}^{-\mu_{12}}$
\item[(iv)] $\gamma_{32}^{\mu_{12}}
=\mu_{32}^{\mu_{12}}\mu_{13}\mu_{32}^{-1}\gamma_{32}\mu_{13}^{-1}$
\item[(v)] $\gamma_{13}^{\mu_{12}^{-1}} =\mu_{32}^{-1}\gamma_{13}\mu_{32}^{\mu_{12}^{-1}}$
\item[(vi)] $\gamma_{31}^{\mu_{12}^{-1}} =\mu_{32}^{-\mu_{12}^{-1}}\gamma_{31}\mu_{32}$
\item[(vii)] $\gamma_{23}^{\mu_{12}^{-1}} =\mu_{32}^{-\mu_{12}^{-1}}\mu_{13}^{-1}\mu_{32}\gamma_{23}\mu_{13}$
\item[(viii)] $\gamma_{32}^{\mu_{12}^{-1}} =\mu_{13}^{-1}\gamma_{32} \mu_{32}^{-1}\mu_{13}\mu_{32}^{\mu_{12}^{-1}}$
\end{enumerate}
\end{multicols}

\item Conjugations by $\mu_{21}$ and $\mu_{21}^{-1}$ on $\gamma_{13}, \gamma_{23}, \gamma_{32}$, and $\gamma_{31}$:
\begin{multicols}{2}
\begin{enumerate}
\item[(i)] $\gamma_{23}^{\mu_{21}} =\mu_{31}^{\mu_{21}}\gamma_{23}\mu_{31}^{-1}$
\item[(ii)] $\gamma_{32}^{\mu_{21}} =\mu_{31}\gamma_{32}\mu_{31}^{-\mu_{21}}$
\item[(iii)] $\gamma_{13}^{\mu_{21}} =\mu_{23}\gamma_{13}\mu_{31}\mu_{23}^{-1}\mu_{31}^{-\mu_{21}}$
\item[(iv)] $\gamma_{31}^{\mu_{21}} =\mu_{31}^{\mu_{21}}\mu_{23}\mu_{31}^{-1}\gamma_{31}\mu_{23}^{-1}$
\item[(v)] $\gamma_{23}^{\mu_{21}^{-1}} =\mu_{31}^{-1}\gamma_{23}\mu_{31}^{\mu_{21}^{-1}}$
\item[(vi)] $\gamma_{32}^{\mu_{21}^{-1}} =\mu_{31}^{-\mu_{21}^{-1}}\gamma_{32}\mu_{31}$
\item[(vii)] $\gamma_{13}^{\mu_{21}^{-1}} =\mu_{31}^{-\mu_{21}^{-1}}\mu_{23}^{-1}\mu_{31}\gamma_{13}\mu_{23}$
\item[(viii)] $\gamma_{31}^{\mu_{21}^{-1}} =\mu_{23}^{-1}\gamma_{31} \mu_{31}^{-1}\mu_{23}\mu_{31}^{\mu_{21}^{-1}}$
\end{enumerate}
\end{multicols}
\end{itemize}

Therefore, we can express $\mu_{13}^w$ and $\mu_{23}^w$ in terms of the conjugation of $\mu_{31}$ and $\mu_{32}$ up to multiplication with other words. Similarly, we can express $\mu_{31}^{\mu_{12}^{\pm 1}}$, $\mu_{31}^{\gamma_{12}^{\pm 1}}$, $\mu_{32}^{\mu_{21}^{\pm 1}}$, and $\mu_{32}^{\gamma_{21}^{\pm 1}}$ in terms of reduced powers of $\mu_{31}$ and $\mu_{32}$.
Moreover, we can also express $\gamma_{ij}^{\mu_{kl}^{\pm 1}}$, where $\gamma_{ij} \in \{ \gamma_{13},\gamma_{23},\gamma_{31}, \gamma_{32}\}$ and $\mu_{kl} \in \{ \mu_{12}, \mu_{21} \}$ in terms of some reduced powers of $\mu_{31}$ and $\mu_{32}$ up to multiplication with other words.
Hence, $VS_2^*$ is generated by $\mu_{13}, \mu_{23}$ and all reduced powers of $\mu_{31},\mu_{32}$ and $\gamma_{13}, \gamma_{23}, \gamma_{31}, \gamma_{32}$. 
\end{proof}

The following is a direct consequence of Proposition~\ref{VSPG3 Structure}.
\begin{corollary}
Every word in $VSPG_3$ can be uniquely written in the form $w = w_2w_1$, where $w_1$ is a reduced word over the alphabet $\{\mu_{12}^{\pm 1}, \mu_{21}^{\pm 1}, \gamma_{12}^{\pm 1}, \gamma_{21}^{\pm 1} \}$ and $w_2$ is a reduced word over the alphabet $\{ \mu_{13}^{\pm1}, \mu_{23}^{\pm 1}, \mu_{31}^{\pm w(\mu_{21}, \gamma_{21})}, \mu_{32}^{\pm w(\mu_{12}, \gamma_{12})}, \gamma_{i3}^{\pm w(\gamma_{12}, \gamma_{21})}, \gamma_{3i}^{\pm w(\gamma_{12}, \gamma_{21})} \, | \, i = 1, 2   \}$.
\end{corollary}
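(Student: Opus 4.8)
The plan is to combine the semi-direct product decomposition $VSPG_3 = VS_2^* \rtimes VS_1^*$ from Proposition~\ref{VSPG3 Structure} with the explicit generating set for $VS_2^*$ given there, and then argue that the stated form is both attainable (existence) and forced (uniqueness). First I would establish existence: by Proposition~\ref{VSPG3 Structure}, any $w \in VSPG_3$ factors as $w = w_2 w_1$ with $w_1 \in VS_1^* = VSPG_2$ and $w_2 \in VS_2^*$. The factor $w_1$ lives in the free-product-like group generated by $\mu_{12}^{\pm 1}, \mu_{21}^{\pm 1}, \gamma_{12}^{\pm 1}, \gamma_{21}^{\pm 1}$ (with the single relation $\mu_{12}\gamma_{21} = \gamma_{12}\mu_{21}$, equivalently $\bgamma_{12}\gamma_{12} = \mu_{12}\gamma_{21}\mu_{21}^{-1}\bgamma_{12}$, etc.), so $w_1$ is by definition a reduced word over that alphabet. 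For $w_2$, Proposition~\ref{VSPG3 Structure} tells us $VS_2^*$ is generated by $\mu_{13}, \mu_{23}$ together with all reduced powers $\mu_{31}^{w(\mu_{21},\gamma_{21})}$, $\mu_{32}^{w(\mu_{12},\gamma_{12})}$, and $\gamma_{ij}^{w(\gamma_{12},\gamma_{21})}$ for $\gamma_{ij} \in \{\gamma_{13},\gamma_{23},\gamma_{31},\gamma_{32}\}$; rewriting $w_2$ over exactly this generating alphabet (and cancelling adjacent inverse pairs) puts it in the claimed reduced form.

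Next I would handle uniqueness, which is the real content. Uniqueness of the \emph{pair} $(w_2, w_1)$ as group elements follows from $VS_2^* \cap VS_1^* = \{1_3\}$ together with the semi-direct product structure: if $w_2 w_1 = w_2' w_1'$ then $(w_2')^{-1} w_2 = w_1' w_1^{-1} \in VS_2^* \cap VS_1^* = \{1_3\}$, so $w_2 = w_2'$ and $w_1 = w_1'$ as elements. What remains is uniqueness of the reduced-word \emph{representatives}, i.e.\ that $VS_1^*$ is free of rank three modulo the one twist relation, so that reduced words over $\{\mu_{12}^{\pm1},\mu_{21}^{\pm1},\gamma_{12}^{\pm1},\gamma_{21}^{\pm1}\}$ are unique normal forms, and that $VS_2^*$ is freely generated (in the appropriate sense) by $\mu_{13},\mu_{23}$ and the indicated reduced powers. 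For $VS_1^* = VSPG_2$, the relation $\mu_{12}\gamma_{21} = \gamma_{12}\mu_{21}$ lets one eliminate $\gamma_{21}$, giving a free group of rank three on $\mu_{12}, \mu_{21}, \gamma_{12}$; hence reduced words in these three generators have unique normal form, and re-expressing in the symmetric alphabet $\{\mu_{12}^{\pm1},\mu_{21}^{\pm1},\gamma_{12}^{\pm1},\gamma_{21}^{\pm1}\}$ with the convention of writing $\gamma_{21}$ only when no cancellation occurs yields a unique reduced representative.

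The main obstacle will be establishing that the exhibited generators of $VS_2^*$ are \emph{independent}, so that the reduced word $w_2$ is genuinely unique and not merely existent. Here I would use the semi-direct product $VSPG_3 = VS_2^* \rtimes VS_1^*$ again, but from the other side: $VS_2^*$ is the normal closure of $VS_2 = \langle \mu_{13},\mu_{23},\mu_{31},\mu_{32},\gamma_{13},\gamma_{23},\gamma_{31},\gamma_{32}\rangle$ in $VSPG_3$, and the conjugation formulas tabulated in the proof of Proposition~\ref{VSPG3 Structure} show exactly how $VS_1^*$ acts. The key point is that, modulo this action, the only relations among the $VS_2$-generators that survive are those that express $\mu_{13}^w, \mu_{23}^w$ and higher conjugates in terms of the eight reduced-power families; one checks that Theorem~\ref{VSPGn Gamma Presentation}'s relations for $VSPG_3$, when restricted to $VS_2^*$ via the rewriting, impose no further identification among the generators $\mu_{13}, \mu_{23}, \mu_{31}^{w(\mu_{21},\gamma_{21})}, \mu_{32}^{w(\mu_{12},\gamma_{12})}, \gamma_{ij}^{w(\gamma_{12},\gamma_{21})}$. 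This mirrors Bardakov's argument for the virtual pure braid group $VP_n$ in~\cite{Bardakov}, and I would follow that template closely, citing Proposition~\ref{VSPG3 Structure} and Lemma~\ref{VSPGn Conjugation} for all the structural input and reducing the uniqueness claim to the freeness statements just described.
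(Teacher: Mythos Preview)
Your existence argument and your element-level uniqueness argument (via $VS_2^* \cap VS_1^* = \{1_3\}$ in the semi-direct product) are correct and are precisely what the paper intends: the paper gives no separate proof of this corollary, stating only that it is ``a direct consequence of Proposition~\ref{VSPG3 Structure}.'' The uniqueness being claimed is uniqueness of the factorization $w = w_2 w_1$ with $w_2 \in VS_2^*$ and $w_1 \in VS_1^*$ as \emph{group elements}; the phrases ``reduced word over the alphabet~$\ldots$'' merely record which generating set each factor is expressed in, consistent with Proposition~\ref{VSPG3 Structure}. The analogous Corollary after Theorem~\ref{Semi Direct Product of VSPGn} is proved in exactly this spirit.

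Everything you do beyond that point is extra, and it runs into genuine difficulties. First, word-level uniqueness over the four-letter alphabet $\{\mu_{12}^{\pm1},\mu_{21}^{\pm1},\gamma_{12}^{\pm1},\gamma_{21}^{\pm1}\}$ for $VS_1^*$ cannot hold as stated: the relation $\mu_{12}\gamma_{21} = \gamma_{12}\mu_{21}$ already gives two distinct reduced words for the same element, so your ad hoc ``convention of writing $\gamma_{21}$ only when no cancellation occurs'' does not yield a unique representative. (Your observation that eliminating $\gamma_{21}$ makes $VSPG_2$ free of rank $3$ is correct, but that is uniqueness over a \emph{three}-letter alphabet, not the one in the statement.) Second, the paper never asserts that $VS_2^*$ is free on the listed generators---Theorem~\ref{Semi Direct Product of VSPGn} calls the $VS_{k-1}^*$ ``infinitely generated,'' not free---and your proposed verification that ``no further identification'' arises from the relations of Theorem~\ref{VSPGn Gamma Presentation} is only sketched, not carried out; Bardakov's freeness result for the virtual (non-singular) case does not transfer automatically once the $\gamma$-generators and the relation~\eqref{eq:pureTwist} are present. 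So: keep your first two paragraphs, drop the freeness discussion, and you match the paper exactly.
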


\begin{definition} 
Let $2 \le k \le n$ and $1 \le j \le k-1$. The \textit{reduced power of the generator $\mu_{kj} \in VSPG_k$} is defined as $\mu_{kj}^{w(\mu_{ij}, \gamma_{ij})}$, where $w(\mu_{ij}, \gamma_{ij})$ is the empty word or a word in $VSPG_{k-1}$ beginning with a non-zero power of some generator $\mu_{ij}$ or $\gamma_{ij}$, where $1 \le i \le k-1$ and $i \neq j$. The \textit{reduced power of the generators $\gamma_{kj}, \gamma_{jk} \in VSPG_k$} are defined as $\gamma_{kj}^{w(\gamma_{ij}, \gamma_{ji})}$ and $\gamma_{jk}^{w(\gamma_{ij}, \gamma_{ji})}$, respectively, where $w(\gamma_{ij}, \gamma_{ji})$ is the empty word or a word in $VSPG_{k-1}$ beginning with a non-zero power of some generator $\gamma_{ij}$ or $\gamma_{ji}$, where $1 \le i \le k-1$ and $i \neq j$.
\end{definition}

\begin{proposition} \label{Generators of VSi*}
Let $3 \le k \le n$. The subgroup $VS_{k-1}^*$ of $VSPG_{k}$ is generated by $\mu_{jk}$ for all $1 \le j \le k-1$ and all reduced powers of the generators $\mu_{kj}, \gamma_{kj}$ and $\gamma_{jk}$ for all $1 \le j \le k-1$. 
\end{proposition}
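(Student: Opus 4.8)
The plan is to prove this by induction on $k$, mirroring the structure of the proof of Proposition~\ref{VSPG3 Structure} (which establishes the base case $k=3$). So assume $3 \le k \le n$ and that the statement holds for all smaller values; in fact, to run the induction cleanly I would simultaneously carry along the decomposition $VSPG_k = VS_{k-1}^* \rtimes VSPG_{k-1}$, which follows from the same argument as in Proposition~\ref{VSPG3 Structure}: $VS_{k-1}^* \cap VSPG_{k-1} = \{1_k\}$, the group $VSPG_k$ is generated by the generators of $VS_{k-1}$ together with those of $VSPG_{k-1}$, and for $\alpha \in VSPG_{k-1}$, $\beta \in VS_{k-1}$ we have $\alpha\beta = (\alpha\beta\alpha^{-1})\alpha \in VS_{k-1}^* \cdot VSPG_{k-1}$. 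By definition $VS_{k-1}^*$ is the normal closure of $VS_{k-1}$ in $VSPG_k$, and since $VSPG_k = VS_{k-1}^* \cdot VSPG_{k-1}$ with $VS_{k-1}^*$ already normal, the normal closure of $VS_{k-1}$ is generated by the conjugates $x^{w}$ where $x$ ranges over the generators $\mu_{jk}, \mu_{kj}, \gamma_{jk}, \gamma_{kj}$ ($1 \le j \le k-1$) of $VS_{k-1}$ and $w$ ranges over $VSPG_{k-1}$.

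The heart of the argument is then to show that this generating set can be trimmed down to the claimed one. First I would observe that conjugation of a generator of $VS_{k-1}$ by any generator $\mu_{pq}^{\pm 1}, \gamma_{pq}^{\pm 1}$ of $VSPG_{k-1}$ (so $1 \le p \ne q \le k-1$) falls into one of the cases of Lemma~\ref{VSPGn Conjugation}: if $\{p,q\} \cap \{j\} = \emptyset$ and $\{p,q\} \cap \{k\} = \emptyset$ in the relevant sense, part (i) gives that the conjugation is trivial (here $\max\{p,q\} \le k-1 < k$, so $\mu_{jk}, \mu_{kj}, \gamma_{jk}, \gamma_{kj}$ with $j \notin \{p,q\}$ are fixed); otherwise $q = j$ (the case $p = j$ being similar after relabeling, using that indices $p, q \le k-1 < k$), and parts (ii)--(v) of Lemma~\ref{VSPGn Conjugation} apply with the roles $i \mapsto p$, $j \mapsto q = j$, $k \mapsto k$. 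Reading off those identities: the conjugates $\mu_{jk}^{\mu_{pj}^{\pm1}}$, $\mu_{jk}^{\gamma_{pj}^{\pm1}}$ and $\gamma_{jk}^{\mu_{pj}^{\pm1}}$ (these are $\mu_{ik}^{\mu_{ij}}$, etc., in the notation of the lemma, with $i = p$) each express as a product of $\mu_{jk}$ or $\gamma_{jk}$ with \emph{reduced} powers of $\mu_{kj}$, $\gamma_{kj}$, $\gamma_{jk}$; dually, conjugates of $\mu_{kj}, \gamma_{kj}, \gamma_{jk}$ (parts (ii) last two identities, (iii), (v)) are themselves products of reduced powers of $\mu_{kj}, \gamma_{kj}$ with a generator of $VSPG_{k-1}$-type factor that, being an honest reduced power, stays inside the claimed generating set. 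Pushing this through inductively on the length of $w$ — conjugating a generator by $\mu_{pq}w'$ and using that conjugation by $w'$ of the pieces produced at the first step again lands in the claimed set — shows every $x^w$ lies in the subgroup generated by $\{\mu_{jk}\}$ and all reduced powers of $\mu_{kj}, \gamma_{kj}, \gamma_{jk}$.

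The main obstacle I anticipate is bookkeeping, not conceptual difficulty: one has to be careful that when a conjugate like $\mu_{jk}^{\gamma_{pj}}$ is rewritten via Lemma~\ref{VSPGn Conjugation}(ii) as $\gamma_{jk}^{-\gamma_{pj}}\mu_{kj}^{-\gamma_{pj}}\gamma_{kj}^{\gamma_{pj}}\mu_{jk}\mu_{kj}$ — wait, with the lemma's $i,j,k$ renamed, this is $\gamma_{qk}^{-\gamma_{pq}}\mu_{kq}^{-\gamma_{pq}}\gamma_{kq}^{\gamma_{pq}}\mu_{pk}\mu_{qk}$ — each exponentiated factor is genuinely a \emph{reduced} power (the conjugating word $\gamma_{pq}$ begins with a nonzero power of a $\gamma$-generator of $VSPG_{k-1}$, matching the definition of reduced power of $\gamma_{kq}, \gamma_{qk}$), and the bare factors $\mu_{pk}, \mu_{qk}$ are among the allowed $\mu_{jk}$'s. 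When one iterates, the exponents accumulate into longer words $w(\mu_{ij},\gamma_{ij})$ or $w(\gamma_{ij},\gamma_{ji})$ that must still begin with the right type of letter and lie in $VSPG_{k-1}$; verifying this compatibility at each iteration (and handling the $p=j$ versus $q=j$ symmetry, and the $\max$-condition in part (i) to know which generators are untouched) is the routine-but-delicate part. I would present it by: (1) stating the reduction to conjugates $x^w$; (2) a single lemma-style paragraph handling one conjugation step via Lemma~\ref{VSPGn Conjugation}, organized by which of (i)--(v) applies; (3) an induction on $\ell(w)$ closing the argument; and I would explicitly flag that the $n=k$ computations are the evident generalizations of the displayed lists in the proof of Proposition~\ref{VSPG3 Structure}.
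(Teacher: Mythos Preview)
Your proposal is correct and follows essentially the same approach as the paper: the paper's proof is a single sentence stating that the result is a consequence of Lemma~\ref{VSPGn Conjugation} and is proved in the same manner as Proposition~\ref{VSPG3 Structure}, and your outline is precisely a fleshed-out version of that strategy. Your added observation that $VS_{k-1}^*$ is already generated by the conjugates $x^w$ with $w \in VSPG_{k-1}$ (rather than all of $VSPG_k$), together with the induction on $\ell(w)$ using the case analysis of Lemma~\ref{VSPGn Conjugation}, is exactly the mechanism implicit in the paper's reference to the $k=3$ computation.
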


\begin{proof}
The statement is a consequence of Lemma~\ref{VSPGn Conjugation} and it is proved in a similar manner as Proposition~\ref{VSPG3 Structure}.
\end{proof}

We are now ready to state and prove our main theorem about the structure of the group $VSPG_n$.

\begin{theorem} \label{Semi Direct Product of VSPGn} 
The group $VSPG_n$, $n \ge 2$, is representable as the semi-direct product
\[VSPG_n=VS_{n-1}^* \rtimes VSPG_{n-1}=VS_{n-1}^* \rtimes \left(VS_{n-2}^* \rtimes \left( \cdots \rtimes \left(VS_2^* \rtimes VS_1^* \right)  \cdots \right) \right), \]
where $VS_{k-1}^*$, for all $3 \le k \le n$, are infinitely generated subgroups of $VSPG_n$ and $VS_1^*$ is a subgroup of rank $4$.
\end{theorem}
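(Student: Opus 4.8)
The plan is to prove this by induction on $n$. The base case $n=2$ is the observation recorded above: $VSPG_2=VS_1=VS_1^*$ is the four‑generator, one‑relator group on $\mu_{12},\mu_{21},\gamma_{12},\gamma_{21}$ with the relation $\mu_{12}\gamma_{21}=\gamma_{12}\mu_{21}$, hence of rank $4$; and the case $n=3$ is exactly Proposition~\ref{VSPG3 Structure}. Assuming the decomposition for $VSPG_{n-1}$, it suffices to establish the single splitting
\[ VSPG_n=VS_{n-1}^*\rtimes VSPG_{n-1}, \]
because composing it with the inductive decomposition of $VSPG_{n-1}$ yields the asserted iterated semi‑direct product, while Proposition~\ref{Generators of VSi*} together with the base case identifies the generating sets of the factors.

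To prove the splitting I would first realize $VSPG_{n-1}$ as the subgroup of $VSPG_n$ generated by the $\mu_{ij},\gamma_{ij}$ with $1\le i\ne j\le n-1$: since the defining relations of $VSPG_{n-1}$ in Theorem~\ref{VSPGn Gamma Presentation} are precisely those relations of $VSPG_n$ all of whose indices lie in $\{1,\dots,n-1\}$, the map on generators extends to a homomorphism $\iota\co VSPG_{n-1}\to VSPG_n$. The key device is a retraction $f\co VSPG_n\to VSPG_{n-1}$ defined on generators by $f(\mu_{ij})=\mu_{ij}$, $f(\gamma_{ij})=\gamma_{ij}$ when $i,j\le n-1$, and $f(\mu_{in})=f(\mu_{ni})=f(\gamma_{in})=f(\gamma_{ni})=1_n$. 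One checks that $f$ respects every relation family~\eqref{eq:pureYB}--\eqref{eq:purecommuting}: a relation with all indices $\le n-1$ is sent to the same relation of $VSPG_{n-1}$, while a relation in which the index $n$ appears collapses to a trivial identity once the $n$‑indexed generators are killed (for instance~\eqref{eq:pureYB} with $k=n$ becomes $\mu_{ij}=\mu_{ij}$, and~\eqref{eq:pureTwist} with $j=n$ becomes $1_n=1_n$). Thus $f$ is a well‑defined homomorphism with $f\circ\iota=\mathrm{id}$, so $\iota$ is injective and we may view $VSPG_{n-1}\le VSPG_n$.

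Next I would verify the two semi‑direct‑product conditions. For $VSPG_n=VS_{n-1}^*\cdot VSPG_{n-1}$: by Lemma~\ref{generatorsVSPGn} an arbitrary element of $VSPG_n$ is a word in the $\mu_{ij}^{\pm1},\gamma_{ij}^{\pm1}$, each letter being either a generator of $VSPG_{n-1}$ or a generator of $VS_{n-1}$ (namely those carrying the index $n$); using $\alpha\beta=(\alpha\beta\alpha^{-1})\alpha$ with $\alpha\in VSPG_{n-1}$, $\beta$ a generator of $VS_{n-1}$, and $\alpha\beta\alpha^{-1}\in VS_{n-1}^*$, one pushes all ``$n$‑letters'' to the left and rewrites the word as $w^*\!\cdot w$ with $w^*\in VS_{n-1}^*$, $w\in VSPG_{n-1}$. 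For triviality of $VS_{n-1}^*\cap VSPG_{n-1}$: since $VS_{n-1}^*$ is the normal closure in $VSPG_n$ of generators annihilated by $f$, $f$ kills every conjugate of such a generator, so $f(VS_{n-1}^*)=\{1_n\}$; hence $x\in VS_{n-1}^*\cap VSPG_{n-1}$ forces $x=f(x)=1_n$. As $VS_{n-1}^*$ is normal in $VSPG_n$ by construction, this gives $VSPG_n=VS_{n-1}^*\rtimes VSPG_{n-1}$ and completes the induction.

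For the final assertion that $VS_{k-1}^*$ is infinitely generated when $k\ge3$: Proposition~\ref{Generators of VSi*} already supplies an infinite generating set (the $\mu_{jk}$ and all reduced powers $\mu_{kj}^{w},\gamma_{kj}^{w},\gamma_{jk}^{w}$), so what is left is to rule out a finite generating set. I would do this by mapping $VS_{k-1}^*$ onto an infinite‑rank abelian group that separates infinitely many of these reduced powers — for instance by studying the induced action of $VSPG_{k-1}$ on the abelianization $VS_{k-1}^*/[VS_{k-1}^*,VS_{k-1}^*]$ and reading off that distinct exponent‑words $w$ give independent classes. The step I expect to be the main obstacle is precisely this last one — making ``infinitely generated'' rigorous rather than merely exhibiting an infinite list; by contrast, the verification that the retraction $f$ is consistent with every relation (in particular the mixed Yang--Baxter relations~\eqref{eq:pureYB-mixt} and~\eqref{eq:pureYB-mixt-two} and the twist relation~\eqref{eq:pureTwist} when an index equals $n$) is routine but must be carried out case by case.
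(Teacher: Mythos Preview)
Your proposal follows the same inductive skeleton as the paper: establish $VSPG_n=VS_{n-1}^*\rtimes VSPG_{n-1}$ from the product decomposition and trivial intersection, then iterate. The paper's proof is extremely terse---it simply asserts $VS_{n-1}^*\cap VSPG_{n-1}=\{1_n\}$ as ``obvious'' and declares the infinite generation of $VS_{k-1}^*$ to be ``quite evident,'' without further argument.

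Your retraction $f\co VSPG_n\to VSPG_{n-1}$ killing the $n$-indexed generators is a genuine improvement over the paper: it gives a clean reason why the intersection is trivial and simultaneously shows that $VSPG_{n-1}$ really does embed in $VSPG_n$ (which the paper uses implicitly but never justifies). Your relation-by-relation check that $f$ is well defined is correct. Likewise, your honesty about the ``infinitely generated'' claim is warranted---the paper offers no argument there, and exhibiting an infinite generating set does not by itself prove infinite generation. Your suggested approach via the abelianization is reasonable, though carrying it out would require some work (and arguably more than the paper itself supplies). In short: same route, but you fill gaps the paper leaves open.
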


\begin{proof} 
The first decomposition as the semi-direct product $VSPG_n = VS_{n-1}^* \rtimes VSPG_{n-1}$ is obvious, since $VS_{n-1}^* \cap VSPG_{n-1} = \{1_n\}$ and $VSPG_n = VS_{n-1}^* \cdot VSPG_{n-1}$. [Note that the latter equality holds since $VSPG_n$ is generated by the union of the generators of $VS_{n-1}$ and $VSPG_{n-1}$, and since if $\alpha \in VSPG_{n-1}$ and $\beta \in VS_{n-1}$, then $\alpha \beta = (\alpha \beta \alpha^{-1}) \alpha \in VS_{n-1}^* \cdot VSPG_{n-1}$.]

We prove the second decomposition by induction on $n$.
Recall that $VSPG_2 = VS_1^*$, so the statement holds trivially for $n=2$. By Proposition~\ref{VSPG3 Structure}, $VSPG_3 = VS_2^* \rtimes VS_1^*$, and thus the statement is true for $n=3$, as well.
For the induction hypothesis, we assume that the group $VSPG_{n-1}$ can be represented as the following semi-direct product:
\begin{eqnarray*}
VSPG_{n-1} = VS_{n-2}^* \rtimes \left(VS_{n-3}^* \rtimes \left( \cdots \rtimes \left(VS_2^* \rtimes VS_1^* \right) \cdots  \right) \right). 
\end{eqnarray*}
Since $VSPG_n = VS_{n-1}^* \rtimes VSPG_{n-1}$, we obtain that 
\begin{eqnarray*}
VSPG_n &=&VS_{n-1}^* \rtimes VSPG_{n-1}\\
&=& VS_{n-1}^* \rtimes  \left(VS_{n-2}^* \rtimes \left(VS_{n-3}^* \rtimes \left( \cdots \rtimes \left(VS_2^* \rtimes VS_1^* \right) \cdots  \right) \right) \right).
\end{eqnarray*}

Finally, it is quite evident that for each $3 \le k \le n$, $VS_{k-1}^*$ is an infinitely generated subgroup of $VSPG_n$. We also know that $VS_1^*$ is generated by $\mu_{12}, \mu_{21}, \gamma_{12}, \gamma_{21}$, and thus it is a subgroup of rank $4$.
\end{proof}

As a consequence of Theorem~\ref{Semi Direct Product of VSPGn}, we obtain a normal form of words in $VSPG_n$ and $VSG_n$.
\begin{corollary}
Every element $\beta \in VSG_n$ can be uniquely written in the form 
\[ \beta = w_{n-1}w_{n-2} \cdots w_2w_1 \lambda, \hspace{1cm} \text{where} \, \, w_k \in VS_k^*, \,\, \lambda \in \Lambda_n,\]
where each $w_k$ is a reduced word over the alphabet consisting of the generators $\mu_{j,k+1}^{\pm 1}$ and the reduced powers of the generators $\mu_{k+1,j}^{\pm 1}, \gamma_{k+1,j}^{\pm 1}, \gamma_{j,k+1}^{\pm 1}$, for $1 \le j \le k$, and where  $\Lambda_n$ is a Schreier set of coset representatives of $VSPG_n$ in $VSG_n$, as described in the proof of Theorem~\ref{VSPGn Gamma Presentation}.
\end{corollary}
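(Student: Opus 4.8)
The plan is to build the normal form by stacking three structural results already proved: that $\Lambda_n$ is a transversal for $VSPG_n$ in $VSG_n$ (from the Reidemeister--Schreier construction in the proof of Theorem~\ref{VSPGn Gamma Presentation}), the iterated semi-direct product decomposition of $VSPG_n$ (Theorem~\ref{Semi Direct Product of VSPGn}), and the explicit generating sets of the factors $VS_k^*$ (Proposition~\ref{Generators of VSi*}, together with the base case $VS_1^* = VSPG_2$). The argument is essentially bookkeeping and involves no new geometry.

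First I would split off the permutation part. Since $\Lambda_n$ contains exactly one representative of each right coset of $VSPG_n$ in $VSG_n$, every $\beta \in VSG_n$ lies in $VSPG_n\,\lambda$ for a unique $\lambda \in \Lambda_n$, so $\beta = a\lambda$ with $a = \beta\lambda^{-1} \in VSPG_n$; and if $a_1\lambda_1 = a_2\lambda_2$ then $\lambda_1$ and $\lambda_2$ represent the same coset, forcing $\lambda_1 = \lambda_2$ and hence $a_1 = a_2$. Next I would peel off the layers of $a$ using the internal semi-direct product structure. Since $VS_{n-1}^*$ is normal in $VSPG_n$ (it is the normal closure of $VS_{n-1}$ in $VSPG_n$), $VSPG_{n-1}$ is a subgroup of $VSPG_n$, $VS_{n-1}^* \cap VSPG_{n-1} = \{1_n\}$, and $VSPG_n = VS_{n-1}^*\cdot VSPG_{n-1}$ (Theorem~\ref{Semi Direct Product of VSPGn}), the multiplication map $VS_{n-1}^* \times VSPG_{n-1} \to VSPG_n$ is a bijection. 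Thus $a = w_{n-1}a'$ uniquely with $w_{n-1} \in VS_{n-1}^*$ and $a' \in VSPG_{n-1}$; applying the same reasoning to $a'$ and inducting on $n$ (with $VSPG_2 = VS_1^*$ as the base case) yields the unique factorization $a = w_{n-1}w_{n-2}\cdots w_2 w_1$ with $w_k \in VS_k^*$. Combining, $\beta = w_{n-1}\cdots w_1\lambda$, and the tuple $(w_{n-1},\dots,w_1,\lambda)$ is uniquely determined by $\beta$.

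It then remains to rewrite each $w_k$ over the stated alphabet. For $2 \le k \le n-1$, Proposition~\ref{Generators of VSi*} (with its index equal to $k+1$) gives that $VS_k^*$ is generated by the $\mu_{j,k+1}$, $1 \le j \le k$, together with all reduced powers of $\mu_{k+1,j}$, $\gamma_{k+1,j}$, $\gamma_{j,k+1}$, $1 \le j \le k$; for $k=1$ this is exactly $VS_1^* = VSPG_2 = \langle \mu_{12},\mu_{21},\gamma_{12},\gamma_{21}\rangle$, which matches the same pattern with the conjugating words empty. Hence each $w_k$ can be written as a word over these generators and their inverses, and deleting any adjacent inverse pairs (a terminating process that does not alter the group element) produces a reduced word of the asserted form. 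The subtle point here --- and the place where I expect the only genuine care to be needed --- is the precise meaning of ``uniquely'': the factors $w_k \in VS_k^*$ and $\lambda \in \Lambda_n$ are uniquely pinned down by $\beta$, but a fixed element $w_k$ may admit more than one reduced word over the alphabet, since the groups $VS_k^*$ satisfy relations (already $VS_1^* = VSPG_2$ has $\mu_{12}\gamma_{21} = \gamma_{12}\mu_{21}$, giving two reduced words for one element). I would therefore state the uniqueness at the level of the ordered tuple of factors and present the reduced-word description as a representation statement for each factor.
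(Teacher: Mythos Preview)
Your proposal is correct and follows essentially the same route as the paper: first use that $\Lambda_n$ is a Schreier transversal to write $\beta = w\lambda$ uniquely with $w\in VSPG_n$, then invoke Theorem~\ref{Semi Direct Product of VSPGn} to factor $w$ uniquely as $w_{n-1}\cdots w_1$ with $w_k\in VS_k^*$, and finally cite Proposition~\ref{Generators of VSi*} (and the base case $VS_1^*=VSPG_2$) for the generating alphabet of each factor. Your closing remark that uniqueness holds at the level of the tuple $(w_{n-1},\dots,w_1,\lambda)$ rather than at the level of reduced words is a fair and precise observation; the paper's own proof does not address this point.
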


\begin{proof}
Since every element $\beta \in VSG_n$ belongs to a unique right coset of $VSPG_n$ in $VSG_n$, the braid $\beta$ can be uniquely written in the form $\beta = w \lambda$, where $w \in VSPG_n$ and $\lambda \in \Lambda_n$. By Theorem~\ref{Semi Direct Product of VSPGn}, the braid $w$ can be uniquely written in the form 
\[ w= w_{n-1}w_{n-2} \cdots w_2w_1, \,\,\, \text{where} \,\, w_k \in VS_k^*.\]
Since each $w_k$ is a reduced word in the generators of $VS_k^*$ and their inverses, the remaining part of the statement follows from Proposition~\ref{Generators of VSi*} and the fact that $VS_1^*$ is generated by $\mu_{12}, \mu_{21}$ and $\gamma_{12}, \gamma_{21}$. 
\end{proof}


\end{document}